\theoremstyle{plain}
\newtheorem{theorem}{Theorem}[section]
\newtheorem{lemma}[theorem]{Lemma}
\newtheorem{proposition}[theorem]{Proposition}
\newtheorem{corollary}[theorem]{Corollary}
\newtheorem{question}[theorem]{Open problem}
\newtheorem{definition}[theorem]{Definition}
\theoremstyle{definition}
\newtheorem{remark}[theorem]{Remark}
\newcommand{\tmu}{\widetilde \mu}
\DeclareMathOperator{\Sp}{Sp}
\newcommand{\hy}{\hat y}
\newcommand{\hu}{\hat u}
\newcommand{\hw}{\hat w}
\newcommand{\hf}{\hat f}
\newcommand{\tvarphi}{\widetilde \varphi}
\newcommand{\tu}{\widetilde u}
\newcommand{\hhy}{\hat{\hat y}}
\newcommand{\hhf}{\hat{\hat f}}
\newcommand{\hh}{\hat h}
\newcommand{\cH}{\mathcal H}
\newcommand{\cL}{\mathcal L}
\newcommand{\cG}{\mathcal G}
\newcommand{\cP}{\mathbb{P}}
\newcommand{\tlambda}{\widetilde \lambda}
\newcommand{\ty}{\widetilde y}
\newcommand{\bu}{{\bf u}}
\newcommand{\by}{{\bf y}}
\newcommand{\tbu}{\widetilde {\bf u}}
\newcommand{\tby}{\widetilde {\bf y}}
\newcommand{\cN}{{\mathcal N}}
\newcommand{\R}{{\mathbb{R}}}
\newcommand{\C}{{\mathbb{C}}}
\newcommand{\N}{\mathbb{N}}
\newcommand{\loc}{\operatorname{loc}}
\newcommand{\eps}{\varepsilon}
\newcommand{\mR}{\mathbb{R}}
\newcommand{\mZ}{\mathbb{Z}}
\newcommand{\cD}{{\mathcal D}}
\newcommand{\mC}{\mathbb{C}}
\newcommand{\supp}{\operatorname{supp}}
\newcommand{\dsp}{\displaystyle}
\newcommand{\M}{{\mathcal M}}
\newcommand{\hcG}{\mathbb{G}}
\newcommand{\diff}[1][-3]{\mathop{}\mkern#1mu{d}}
\numberwithin{equation}{section}
\newcounter{proofstep}
\newcommand{\step}[1]{%
\refstepcounter{proofstep}%
\vskip-\lastskip\medskip\noindent\textit{Step \arabic{proofstep}: #1.}}
\let\oldproof\proof
\renewcommand{\proof}{\oldproof\setcounter{proofstep}{0}}
      \def\@setcopyright{}
      \def\serieslogo@{}
   \newcommand{\tr}{^\mathsf{T}}
\definecolor{darkgreen}{rgb}{0,0.7,0}
\title[On the small-time local controllability of KdV equations]{On the small-time local
controllability of a KdV system for critical lengths}
\author{Jean-Michel Coron}
\address[Jean-Michel Coron]{Sorbonne Universit\'{e}, Universit\'{e} de Paris, CNRS,
INRIA,
	\newline  \indent Laboratoire Jacques-Louis Lions, \'{e}quipe Cage, Paris,
France.}
\email{coron@ann.jussieu.fr.}
\author{Armand Koenig}
\address[Armand Koenig]{Universit\'e C\^ote d’Azur, CNRS, LJAD, France.}
\email{Armand.KOENIG@univ-cotedazur.fr}
\author{Hoai-Minh Nguyen}
\address[Hoai-Minh Nguyen]{Ecole Polytechnique F\'ed\'erale de Lausanne, EPFL,
\newline \indent CAMA, Station 8,  CH-1015 Lausanne, SwitzerSwitzerland.}
\email{hoai-minh.nguyen@epfl.ch}
\begin{document}

\begin{abstract} This paper is devoted to the local null-controllability
of  the nonlinear  KdV equation equipped the Dirichlet boundary conditions using the
Neumann boundary control on the right.  Rosier proved that this KdV system is
small-time locally controllable for all non-critical lengths and that the
uncontrollable space of the linearized system is of finite dimension when the length is
critical. Concerning critical lengths, Coron and Cr\'{e}peau showed that the same result
holds when the uncontrollable space of the linearized system is of
dimension 1, and later Cerpa, and then Cerpa and Cr\'epeau established that the local
controllability holds at a finite time for all other critical
lengths. In this paper, we prove that, for a class of critical lengths, the nonlinear KdV
system is {\it not} small-time locally controllable.
\end{abstract}

\maketitle

\noindent {\bf Key words.} Controllability, nonlinearity, Korteweg–de Vries

\noindent {\bf AMS subject classification.} 93B05, 93C15, 76B15. 

\tableofcontents

\section{Introduction}

We are concerned about the local null-controllability of the (nonlinear) KdV
equation equipped the Dirichlet boundary conditions using the Neumann boundary
control on the right.  More precisely, given $L > 0$ and $T>0$, we consider the
following  control system
\begin{equation}\label{intro-sys-KdV}\left\{
\begin{array}{cl}
y_t (t, x) + y_x (t, x) + y_{xxx} (t, x) + y (t, x) y_x(t, x) = 0 &  \mbox{ for } t
\in (0, T), \, x \in (0, L), \\[6pt]
y(t, x=0) = y(t, x=L) = 0 & \mbox{ for } t \in (0, T), \\[6pt]
y_x(t , x= L) = u(t) & \mbox{ for } t \in (0, T),
\end{array}\right.
\end{equation}
and
\begin{equation}\label{intro-IC-KdV}
y(t = 0, x)  = y_0 (x) \mbox{ for } x \in  (0, L).
\end{equation}

Here $y$ is the state, $y_0$ is the initial data,  and $u$ is
the control. More precisely, we are interested in the \emph{small-time local
controllability} property of this system.


The KdV equation has been introduced by Boussinesq \cite{1877-Boussinesq} and Korteweg
and de Vries \cite{KdV} as a model for propagation of surface water waves along a
channel.
This equation also furnishes a very useful nonlinear approximation model
including a balance between a weak nonlinearity and weak dispersive effects. The KdV
equation has been intensively studied from various aspects of mathematics, including the
well-posedness, the existence and stability of solitary waves, the integrability, the
long-time behavior, etc., see e.g.~\cite{Whitham74, Miura76, Kato83, Tao06, LP15}.

\subsection{Bibliography}\label{sec:biblio}

The controllability properties of system \eqref{intro-sys-KdV} and
\eqref{intro-IC-KdV} (or of its variants) has been studied
intensively, see e.g. the surveys \cite{RZ09, Cerpa14} and the references therein. Let us briefly review the
existing results on \eqref{intro-sys-KdV} and
\eqref{intro-IC-KdV}. For initial and final datum in $L^2(0, L)$ and controls in $L^2(0, T)$,
Rosier~\cite{Rosier97} proved that the system
is small-time locally controllable around 0 provided that the length $L$ is not critical,
i.e., $L \notin \cN$, where
\begin{equation}\label{def-cN}
\cN : = \left\{ 2 \pi \sqrt{\frac{k^2 + kl + l^2}{3}}; \, k, l \in \N_*\right\}.
\end{equation}
To this end, he studied the controllability of the linearized system using the Hilbert
Uniqueness Method  and compactness-uniqueness arguments. Rosier also  showed that the
linearized system is controllable if $L \not \in \cN$. He as well established
that when $L \in \cN$,  the linearized system is not controllable. More precisely, he showed
that there exists a non-trivial finite-dimensional subspace $\M$ of $L^2(0, L)$  such that its orthogonal space is reachable from $0$ whereas  $\M$ is not.

To tackle the control problem for the critical length $L \in \cN$ with  initial and final  datum in $L^2(0, L)$ and controls in $L^2(0, T)$, Coron and Cr\'epeau
introduced the
power series expansion method \cite{CC04}. The idea is to take into account the effect of
 the nonlinear term $y y_x$  absent in  the linearized system. Using this
method, they showed \cite{CC04} (see also \cite[section 8.2]{Coron07}) that  system \eqref{intro-sys-KdV} and
\eqref{intro-IC-KdV}  is
small-time locally controllable if $L = m 2 \pi$ for $m \in \N_*$ satisfying
\begin{equation}
\nexists (k, l) \in \N_* \times \N_* \mbox{ with } k^2 + kl + l^2 = 3 m^2 \mbox{ and }
k \neq l.
\end{equation}
In this case,  $\dim \M = 1$ and $\M$ is spanned by $1 - \cos x$. Cerpa \cite{Cerpa07}
developed the analysis in \cite{CC04} to prove that system \eqref{intro-sys-KdV} and
\eqref{intro-IC-KdV}  is locally
controllable at \emph{a finite time} in the case $\dim \M = 2$. This corresponds to
the
case where
\[
L = 2 \pi \sqrt{\frac{k^2 + kl + l^2}{3}}
\]
for some $k, \,  l \in \N_*$ with   $k>l$, and there is no $m, n \in N_*$ with $m>n$
and $m^2 + mn + n^2 = k^2 + kl + l^2$. Later, Cr\'epeau and Cerpa \cite{CC09}
succeeded to extend  the ideas in \cite{Cerpa07} to obtain the local
controllability for all other critical lengths at {\it a finite time}. To summarize,
concerning the critical lengths
with  initial and final  datum in $L^2(0, L)$ and controls in $L^2(0, T)$, the small-time local controllability is valid when  $\dim \M
= 1$ and local controllability in a large enough time holds when $\dim \M \ge 2$.

\subsection{Statement of the result}

The control properties of the KdV equations have been intensively studied previously  but the following natural question remains open (see
\cite[Open problem 10]{Coron07-Survey}, \cite[Remark 1.7]{Cerpa07}):

\begin{question} \label{OQ} Is system \eqref{intro-sys-KdV} and \eqref{intro-IC-KdV} small-time
locally controllable for all $L \in \cN$?
\end{question}

In this paper we give a negative answer to this question. We show that
system~\eqref{intro-sys-KdV} and~\eqref{intro-IC-KdV} is not small-time locally
controllable for a class of critical lengths. More precisely, we have

\begin{theorem}\label{thm-main} Let $k, \,  l \in \N_*$ be such that $2 k + l \not \in  3
\N_*$. Assume that
\[
L = 2 \pi \sqrt{\frac{k^2 + k l + l^2}{3}}.
\]
Then system \eqref{intro-sys-KdV} and \eqref{intro-IC-KdV} is not
small-time locally null-controllable with controls in $H^1$ and initial and final datum in $H^3(0, L) \cap H^1_0(0, L)$, i.e.,  there exist $T_0>0$ and $\eps_0 > 0$ such
that,  for all $\delta >0$, there is $y_0 \in H^3(0, L) \cap H^1_0(0, L)$ with $\| y_0\|_{H^3(0, L)} <
\delta$ such that for all $u \in H^1(0, T_0)$ with $\| u\|_{H^1(0, T_0)} < \eps_0$ and $u(0) = y_0'(L)$, we
have
\[
y(T_0, \cdot) \not \equiv 0,
\]
where $y \in C\big([0, T_0]; H^3(0, L) \big)  \cap L^2\big([0, T_0]; H^4(0, L)\big)$ is the unique solution of \eqref{intro-sys-KdV} and \eqref{intro-IC-KdV}.
\end{theorem}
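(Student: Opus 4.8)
The plan is to argue by contradiction, reducing the full nonlinear dynamics to a scalar identity for the component of the solution along the uncontrollable direction, and then to exhibit a sign obstruction in small time. The first step is to produce the multiplier attached to the critical length. Following Rosier's spectral analysis, for $L \in \cN$ the adjoint linearized operator $A^*\phi = \phi' + \phi'''$ admits a nonzero solution $\phi$ for some purely imaginary eigenvalue $ip$, $p\in\R$, subject to the four homogeneous boundary conditions $\phi(0)=\phi(L)=\phi'(0)=\phi'(L)=0$; these are precisely the conditions that make both the control $u$ and the uncontrolled boundary traces of $y$ disappear when $\phi$ is used as a test function. Testing \eqref{intro-sys-KdV} against $\phi$ and integrating by parts, using $\phi'+\phi'''=ip\phi$ and $yy_x=\tfrac12(y^2)_x$, yields for $Y(t):=\int_0^L y(t,x)\phi(x)\,dx$ the clean identity
\[
\dot Y(t) = ip\,Y(t) + \frac12\int_0^L y(t,x)^2\,\phi'(x)\,dx,
\]
in which the control and the linear dynamics contribute only the harmless rotation $ipY$, while the entire controllability content along $\M$ is carried by the quadratic term. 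Setting $Z=e^{-ipt}Y$ and integrating over $[0,T_0]$, reaching the target $y(T_0,\cdot)\equiv 0$ forces the necessary condition
\[
\int_0^L y_0\,\phi\,dx = -\frac12\int_0^{T_0} e^{-ipt}\int_0^L y(t,x)^2\,\phi'(x)\,dx\,dt \qquad (\star).
\]

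Second, I would analyze the right-hand side of $(\star)$ by a power series expansion à la Coron--Cr\'epeau: write $y=y_1+y_2+\cdots$, where $y_1$ solves the linearized system with data $y_0$ and control $u$. Expanding $y_1$ on the eigenbasis of the linearized operator, the leading (quadratic) contribution to $(\star)$ becomes a Hermitian form $\mathcal{Q}$ in $(u,y_0)$, and the part that actually feeds $Y$ is the secular sum over pairs of modes whose frequencies add up to $p$, weighted by the interaction coefficients $\int_0^L e_m e_n\,\phi'\,dx$. The crucial structural input is arithmetic: the hypothesis $2k+l\notin 3\N_*$ (equivalently $k\not\equiv l \bmod 3$) is exactly what guarantees that these interaction coefficients do not cancel, so that, after choosing a suitable phase $\theta$, the real projection $\mathrm{Re}\big(e^{i\theta}\mathcal{Q}\big)$ is sign-definite in small time.

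The main obstacle, and the technical heart of the argument, is to prove this coercivity/sign estimate uniformly as $T_0\to 0$ and to show that it survives the passage from the linear profile $y_1$ to the genuine nonlinear solution $y$. Concretely, I expect to establish a bound of the form
\[
\mathrm{Re}\Big(e^{i\theta}\int_0^{T_0} e^{-ipt}\int_0^L y^2\,\phi'\,dx\,dt\Big)\ \ge\ 0,
\]
with a quantitative lower bound by $\|u\|^2$ in the regime where the control is not negligible. The delicate points are the uniform control of the oscillatory factor $e^{-ipt}$ over the short interval, the estimation of the neglected cubic-and-higher remainder in the $H^3(0,L)\cap H^1_0(0,L)$ / $H^1(0,T_0)$ topology in which the solution is constructed, and the verification that the resonant sum keeps a fixed sign across all admissible mode pairs. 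This is where the special arithmetic of the length enters decisively: for general critical lengths the relevant form need not be definite, which is consistent with the positive controllability results of Cerpa and of Cr\'epeau--Cerpa that hold only at finite, non-small times.

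Finally, I would close the contradiction by choosing the initial datum. Pick $y_0$ small in $H^3(0,L)\cap H^1_0(0,L)$ with $\int_0^L y_0\,\phi\,dx\neq 0$, oriented so that $\mathrm{Re}\big(e^{i\theta}\int_0^L y_0\,\phi\,dx\big)\ge 0$, and split according to the size of $u$. If the control is genuinely small, the solution stays close to the free evolution and the left side of $(\star)$ is of order $\|y_0\|$ while the right side is of higher order, so $(\star)$ fails. If the control is not small, the coercivity estimate forces the real projection of the right side of $(\star)$ to be strictly negative, while the chosen orientation makes the left side nonnegative, so $(\star)$ is again impossible. In either regime $y(T_0,\cdot)\not\equiv 0$ for all admissible $u$, which contradicts small-time local null-controllability and proves Theorem~\ref{thm-main}.
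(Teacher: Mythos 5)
Your reduction is exactly the one the paper uses: your multiplier $\phi$ is the paper's $\varphi(x)e^{-ipt}$ (real form $\Psi$ in \eqref{def-Psi}), your identity $(\star)$ is \eqref{thm-NL-id}, and your plan (small-time sign-definiteness of the quadratic term, then a contradiction) is the published strategy. The genuine gap is that everything you label the ``technical heart'' \emph{is} the theorem, and the route you propose for it would not go through. You propose to prove the coercivity by expanding $y_1$ on the eigenbasis of the linearized operator and showing that resonant interaction coefficients $\int_0^L e_m e_n \phi'\,dx$ do not cancel. But the operator $\mathcal A$ of \Cref{lem-EU} is neither self-adjoint nor skew-adjoint, its eigenvalues and eigenfunctions are not explicit, and the input here is a \emph{boundary} control, so there is no clean eigenfunction expansion of the control-to-state map to feed into such a resonance computation; the paper's introduction states explicitly that this is why the methods of Coron, Beauchard--Morancey and Beauchard--Marbach (where the drift is self- or skew-adjoint with explicit spectrum) are unavailable. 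What the paper does instead (\Cref{pro-Gen}, \Cref{lem-1}, \Cref{lem-B}, \Cref{pro-monotone}, \Cref{corollary-dir}) is Fourier-analytic: it characterizes via Paley--Wiener the controls steering $0$ to $0$ for the \emph{linear} system, writes the quadratic form as $\int_{\mR}\hu(z)\overline{\hu(z-p)}\int_0^L B(z,x)\,dx\,dz$, and computes the high-frequency asymptotics $\int_0^L B(z,x)\,dx = E|z|^{-4/3}+O(|z|^{-5/3})$. The hypothesis $2k+l\notin 3\N_*$ enters solely to guarantee $E\neq 0$ (\Cref{lem-E}); it is a non-degeneracy of this asymptotic coefficient, not a non-cancellation of modal interaction coefficients. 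Moreover the coercivity obtained is in the weak norm $\|u\|_{H^{-2/3}}^2$, a scale a mode-by-mode argument would not naturally produce, and it holds only for controls steering $0$ to $0$ of the linear system in time $T<T_*$ --- not for the quadratic form evaluated along an arbitrary trajectory.

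The second missing piece, which you acknowledge but leave entirely open, is the transfer of that linear coercivity to the nonlinear solution; this is where most of the paper's length lies and it cannot be waved through. One must subtract the $\M$-component ($y_1$ in \eqref{thm-def-y1}), split $y_1=y_2+y_3$, construct an auxiliary control $u_4$ returning $y_3$ to rest so that the comparison solution $\ty$ (with control $u+u_4$) genuinely steers $0$ to $0$ and \Cref{corollary-dir} applies, and then estimate all comparison errors in the norms $H^{-1/3}$, $H^{-2/3}$ so that they are of higher order than the coercive term $\|u\|_{H^{-2/3}}^2$. This forces the new low-regularity estimates for the linear and nonlinear KdV systems (\Cref{sect-L-KdV}, \Cref{lem-kdvNL}, proved through the KdV--Burgers connection) and Hardy-type interpolation inequalities to close the absorption argument. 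Without this apparatus your final dichotomy collapses: in the ``control not small'' regime the sign of $\int\!\!\int y^2\Psi_x$ for the nonlinear $y$ is simply not controlled by the linear coercivity, and in the ``control small'' regime you need quantitative higher-order smallness of the quadratic term relative to $\|y_0\|$, which is precisely what those estimates provide.
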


\begin{question} \label{op1}  We are not able to establish that the control system  \eqref{intro-sys-KdV} and \eqref{intro-IC-KdV} is not small-time  locally controllable with  initial and final  datum in $L^2(0, L)$ and control in $L^2(0, T)$ for a critical length as in \Cref{thm-main}.
It would be interesting to extend the method in the paper to deal with this problem. 
It would be also interesting to know what is the smallest $s$ such that system \eqref{intro-sys-KdV} and \eqref{intro-IC-KdV} is not small-time locally controllable with controls in $H^s(0,T)$,  and initial and final  datum in $D(\mathcal{A}^s)$, $\mathcal{A}$ being defined in \Cref{lem-EU} below.  
\end{question}

\begin{remark} \rm Concerning  \Cref{op1}, may be the smallest $s$  is not an integer, as in the nonlinear parabolic equation studied in \cite{BM20}, a phenomenon which is specific to the  infinite dimension as shown in \cite{BM18}. Note that in \cite{M18} a non integer $s$ already appears  for an obstruction to small-time local controllability; however it is not known if this $s$ is the optimal one.

\end{remark}

\begin{question} It would be also interesting to know what is the optimal time for the local null controllability. In particular one may ask if
$T \leq T^>$, with $T^>$ defined in \cite[p. 463]{CC09}, then the control system  \eqref{intro-sys-KdV} and \eqref{intro-IC-KdV} is not locally null controllable in time $T$ (for example with  initial and final  datum in $H^3(0, L) \cap H^1_0(0, L)$ and control in $H^1(0, T)$) for  critical lengths $L$ as in the above theorem.
\end{question}

\begin{question}
Finally, it would be interesting to know if the assumption $2 k + l \not \in  3
\N_*$ can be replaced by the weaker assumption $\dim \M > 1$. In other words, is it true that the control system  \eqref{intro-sys-KdV} and \eqref{intro-IC-KdV} is not small time locally controllable when $\dim \M > 1$? 
\end{question}

In \Cref{thm-main}, we deal with  controls in $H^1(0, T_0)$,  and initial and final datum in $H^3(0, L) \cap H^1_0(0, L)$ instead of controls in $L^2(0, T_0)$, and initial and final datum in $L^2(0, L)$ as considered in \cite{Rosier97, CC04, Cerpa07, CC09}.
For a subclass of the critical lengths considered in \Cref{thm-main}, we prove later (see \Cref{thm-CP} in \Cref{sect-CP}) that system \eqref{intro-sys-KdV} and \eqref{intro-IC-KdV} is locally controllable with  initial and final  datum in $H^3(0, L) \cap H^1_0(0, L)$ and  controls in $H^1(0, T)$.  It is worth noting that even though the  propagation speed of the KdV equation is
infinite, some time is needed to reach the zero state.

We emphasize that there are other types of boundary controls for the KdV equations
for which there is no critical length, see   \cite{Rosier97, Rosier04, GG08, Cerpa14}.
There are also results on  internal controllability for the KdV equations, see
\cite{RZ96}, \cite{CPR15} and references therein.

A minimal time of the null-controllability is also required for some linear partial differential equations.
This is obviously the case for equations with a finite speed of propagation, such as
the transport equation~\cite[Theorem.~2.6]{Coron07},  or the wave equation~\cite{BLR92,
BG97},  or hyperbolic systems \cite{CoronNg19}. But this can also happen for equations with infinite speed of propagation, such as
some parabolic systems~\cite{ABG11, BAM18}, Grushin-type equations \cite{BMM15, BDE20,
DK20}, Kolmogorov-type equations~\cite{BHHR15} or parabolic-transport coupled
systems~\cite{BKL20},  and the  references therein. Nevertheless,  a minimal time required for the KdV equations using boundary controls is  observed and established for  the first time in this work to our knowledge. This fact is surprising when compared with known results on internal controls for KdV system \eqref{intro-sys-KdV} with $u=0$. It is known, see \cite{CPR15, PVZ05, Pazoto05}, that
the KdV system \eqref{intro-sys-KdV} with $u=0$ is local controllable using internal controls {\it whenever} the  control region contains  an {\it arbitrary} open subset of $(0, L)$.

However our obstruction to small-time local controllability of our KdV control system is of a different nature than these obstructions to small-time null controllability for linear partial differential equations. It comes from a phenomena which already appears
in finite dimension for \textit{nonlinear} control systems. Note that in finite dimension, in contrast to the case of partial differential equations as just pointed above, a linear control system which is controllable in large time is controllable in arbitrary small time. This is no longer the case for nonlinear control systems in finite dimension: There are nonlinear control systems in finite dimension which are locally controllable in large enough time  but are not locally controllable in small time. A typical example is the control system
\begin{equation}\label{eq-nonlinear}
\dot y_1=u,\quad  \dot y_2=y_3,  \quad \dot y_3=-y_2+2y_1u,
\end{equation}
where the state is $(y_1,y_2,y_3)\tr \in \R^3$ and the control is $u\in\R$. There are many powerful necessary conditions for  small-time local controllability of nonlinear control systems in finite dimension. Let us mention in particular the Sussmann condition \cite[Proposition 6.3]{1987-Sussmann-SICON}. See also \cite{BM18} by Beauchard and Marbach for further results, in particular for controls in the Sobolev spaces $H^k(0,T)$, and a different approach. The Sussmann condition \cite[Proposition 6.3]{1987-Sussmann-SICON} tells us that the nonlinear control system \eqref{eq-nonlinear} is not small-time locally controllable (see \cite[Example 3.38]{Coron07}): it gives a precise direction, given by an explicit iterated Lie bracket, in which one cannot move in small time. For partial differential equations iterated Lie brackets can sometimes be defined, at least heuristically, for interior controls but are not well understood  for boundary controls (see \cite[Chapter 5]{Coron07}), which is the type of controls considered here. However, for the simple control system \eqref{eq-nonlinear}, an obstruction to small-time local controllability can be obtained by pointing out that
if $(y,u):[0,T]\to \R^3\times\R$ is a trajectory of the control system \eqref{eq-nonlinear} such that $y(0)=0$, then
\begin{gather}
\label{y2}
y_2(T)=\int_0^T\cos(T-t)y_1^2(t)\diff t,
\\
\label{y4}
y_3(T)=y_1(T)^2-\int_0^T\sin(T-t)y_1^2(t)\diff t.
\end{gather}
Hence,
\begin{gather}
\label{sens-deplacement-0}
y_2(T)\geq 0 \text{ if } T\in[0,\pi/2]
\\
\label{sens-deplacement}
y_3(T)\leq 0 \text{ if } T\in[0,\pi] \text{ and } y_1(T)=0,
\end{gather}
which also show that the control system \eqref{eq-nonlinear} is not small-time locally controllable and more precisely, using \eqref{sens-deplacement}, is not locally controllable in time $T\in[0,\pi]$ (\eqref{sens-deplacement-0} gives only an obstruction for $T\in [0,\pi/2]$). Note that condition \eqref{sens-deplacement-0}, at least for $T>0$ small enough, is the obstruction  to small-time local controllability given by  \cite[Proposition 6.3]{1987-Sussmann-SICON}, while \eqref{sens-deplacement} is not related to this proposition. For the control system \eqref{eq-nonlinear} one knows that it is locally controllable in a large enough time and the optimal time for local controllability is also known: this control system is locally controllable in time $T$ if and only if $T > \pi$; see \cite[Example 6.4]{Coron07}. Moreover, if there are higher order perturbations (with respect to
the weight $(r_1,r_2,r_3)= (1,2,2)$ for the state and $1$ for the control; see \cite[Section 12.3]{Coron07}) one can still get an obstruction to small-time local controllability by pointing out that \eqref{y2} and \eqref{y4} respectively imply
\begin{gather}
\label{y2-coerciv}
\text{for every $T\in (0,\pi/2)$ there exists $\delta>0$ such that $y_2(T)\geq \delta |u|_{H^{-1}(0,T)}^2$},
\\
\label{y3-coerciv}
\text{for every $T\in (0,\pi]$ there exists $\delta>0$ such that if $y_1(T)=0$, then $y_3(T)\leq -\delta |u|_{H^{-2}(0,T)}^2$.}
\end{gather}
Assertion \eqref{y3-coerciv} follows from the following facts:  
$$
\int_0^T\Big(\int_0^t y_1 (s) \, ds \Big)^2 \, dt \leq \int_0^T t\int_0^t y_1(s)^2 \, ds \, dt  \leq T \int_0^T (T-s)  y_1(s)^2 \, ds, 
$$
$$
\int_0^T \Big(\int_t^Ty(s) \, ds \Big)^2 \, dt \leq \int_0^T(T-s) y(s)^2 \, ds, 
$$
and, since $y_1' = u $ and $y_1(0) = 0$,  
$$
\| u \|_{H^{-2} (0, T)}^2  \le C \int_0^T\Big(\int_0^t y_1 (s) \, ds \Big)^2 \, dt + C \Big( \int_0^T y_1(s) \, ds \Big)^2. 
$$
Note that inequality \eqref{y2-coerciv} does not require any condition on the control, while \eqref{y3-coerciv} requires that the control is such that $y_1(T)=0$. On the other hand it is \eqref{y3-coerciv} which gives the largest time for the obstruction to local controllability in time $T$: \eqref{y2-coerciv} gives an obstruction  for $T\in[0,\pi/2)$, while \eqref{y3-coerciv} gives an obstruction  for $T\in[0,\pi]$, which in fact optimal as mentioned above.

 There are nonlinear partial differential equations where related inequalities giving an obstruction to small-time local controllability were already proved, namely  nonlinear Schr\"{o}dinger control systems considered by Coron in \cite{Coron06} and by Beauchard and Morancey in \cite{2014-Beauchard-Morancey-MCRF}, a viscous Burgers equation considered by Marbach in \cite{M18}, and  a nonlinear parabolic equation considered by Beauchard and Marbach in \cite{BM20}. Our obstruction to small-time local controllability is also in the same spirit (see in particular \Cref{corollary-dir}). Let us briefly explain some of the main ingredients of these previous works.
\begin{itemize}
\item In \cite{Coron06} and \cite{2014-Beauchard-Morancey-MCRF}, the control is interior and one can compute, at least formally, the iterated Lie bracket \cite{1987-Sussmann-SICON} in which one could not move in small time (see \cite[Section 9.3.1]{Coron07}) if the control systems were in finite dimension. Then one checks by suitable computations that it is indeed not possible to move in small time in this direction by proving an inequality analogous to \eqref{y3-coerciv}. The computations are rather explicit due to the fact that the drift\footnote{If the linearized control system is written in the form $\dot y =Ay+Bu$, the drift term is the map $y\mapsto Ay$} of the linearized control system is skew-adjoint with explicit and simple eigenvalues and eigenfunctions.
\item In \cite{M18} the control is again interior. However the iterated Lie bracket 
\cite{1987-Sussmann-SICON} in the direction of which one could not move in small time 
turns out to be $0$. Hence it does not produce any obstruction to small-time local 
controllability. However an inequality analogous to \eqref{y2-coerciv} is proved, but with 
a fractional (non integer) Sobolev norm. An important tool of the proof is a change of 
time-scale which allows to do an expansion with respect to a new parameter. In the 
framework of \eqref{eq-nonlinear}, this leads to a boundary layer which is analyzed thanks 
to the maximum principle. Here the drift term of the linearized control system is 
self-adjoint with explicit and simple eigenvalues and eigenfunctions.
\item In \cite{BM20} the control is again an interior control.  Two cases are considered, a case \cite[Theorem 3]{BM20} related to \cite{Coron06} and \cite{2014-Beauchard-Morancey-MCRF} (already analyzed above) and a case \cite[Theorem 4]{BM20} where classical obstructions relying on iterated Lie brackets fail. Concerning \cite[Theorem 4]{BM20} the proof relies on an inequality of type \eqref{y3-coerciv}. The proof of the inequality of type \eqref{y3-coerciv} can be performed by explicit computations due to some special structure of the quadratic form one wants to analyze: roughly speaking it corresponds to the case (see \cite[(4.17)]{BM20}) where \eqref{identity-1} below would be replaced by
\begin{equation}\label{identity-1-new}
\int_0^L \int_{0}^{+ \infty} |y(t, x)|^2 \varphi_x(x) e^{- i p t} \diff t \diff x  =
\int_{\mR}  \hu(z) \overline{\hu(z)} \int_0^L B(z, x) \diff x \diff z, 
\end{equation}
which simplifies the analysis the left hand side of \eqref{identity-1-new}   (in \eqref{identity-1} one has $\hu(z)\overline{\hu(z-p)}$ instead of $\hu(z)\overline{\hu(z)}$). The computations are also simplified by the fact that the drift term of the linearized control system is self-adjoint with, again, explicit eigenvalues and eigenfunctions.
\end{itemize}
In this article we prove an estimate of type \eqref{y3-coerciv}, instead of \eqref{y2-coerciv}, expecting that with more precise estimates one might get the optimal time for local controllability as for the control system \eqref{eq-nonlinear}. The main differences of our study compare with those of  these previous articles are the following ones.
\begin{itemize}
\item This is the first case dealing with boundary controls. In our case one does not know what are the
iterated Lie brackets even heuristically. Let us take this opportunity  to point out that, even if they are expected to not leave in the state space (see \cite[pages 181--182]{Coron07}), that would be very interesting to understand what are these iterated Lie brackets.
\item It sounds difficult to perform the change of time-scale introduced in \cite{M18} in our situation. Indeed this change will also lead to a boundary layer. However one can no longer use the maximum principle to study this boundary layer. Moreover if the change of time-scale, if justified, allows simpler computations\footnote{This is in particular due to the fact that for the limit problem one has again \eqref{identity-1-new}}, the advantage for not using it might be to get better or more explicit time for the obstruction to small-time local controllability.
\item The linear drift term of the linearized control system (i.e. the operator $\mathcal{A}$ defined in Lemma \ref{lem-EU}) is neither self-adjoint nor skew-adjoint. Moreover its eigenvalues and eigenfunctions are not explicit.
\item Finally,  \eqref{identity-1-new} does not hold.
\end{itemize}

\subsection{Ideas of the analysis} Our approach is inspired by the power
series expansion method introduced by Coron and Cr\'epeau \cite{CC04}.
The idea of this method is to search/understand a control $u$ of the form
\[
u = \eps u_1 + \eps^2 u_2 +  \cdots.
\]
The corresponding solution then formally has the form
\[
y = \eps y_1 + \eps^2 y_2  + \cdots,
\]
and the non-linear term $y y_x$ can be written as
\[
y y_x = \eps^2 y_1 y_{1, x} + \cdots.
\]
One then obtains the following systems
\begin{equation}\label{eq:first_order}\left\{
\begin{array}{cl}
y_{1, t} (t, x) + y_{1, x} (t, x) + y_{1, xxx} (t, x) = 0 &  \mbox{ for } t \in
(0, T), \, x \in (0, L), \\[6pt]
y_1(t, x=0) = y_1(t, x=L) = 0 & \mbox{ for } t \in (0, T), \\[6pt]
y_{1, x}(t , x= L) = u_1(t) & \mbox{ for } t \in (0, T),
\end{array}\right.
\end{equation}
\begin{equation} \label{eq:second_order}\left\{
\begin{array}{cl}
y_{2, t} (t, x) + y_{2, x} (t, x) + y_{2, xxx} (t, x) + y_1 (t, x) y_{1, x} (t, x)  = 0 &
\mbox{ for } t \in (0, T), \, x \in (0, L), \\[6pt]
y_2(t, x=0) = y_2(t, x=L)  = 0 & \mbox{ for } t \in (0, T), \\[6pt]
y_{2, x}(t , x= L) = u_2(t) & \mbox{ for } t \in (0, T). 
\end{array}\right.
\end{equation}
The idea in \cite{Cerpa07, CC09} with its root in \cite{CC04}  is then to find $u_1$ and $u_2$ such that, if
$y_1(0, \cdot)=y_2(0, \cdot)=0$, then $y_1(T, \cdot) = 0$ and the $L^2(0, L)$-orthogonal projection of
$y_2(T)$ on $\M$ is a given (non-zero) element in $\M$. In \cite{CC04}, the authors  needed to make
an expansion up to the order $3$ since $y_2 $ belongs to the orthogonal space of $\M$  in
this case. To this end,  in \cite{CC04, Cerpa07, CC09}, the authors
used delicate contradiction arguments to capture the structure of the KdV systems.

The analysis in this paper has the same root as the ones mentioned above. Nevertheless,
instead of using a contradiction argument, our strategy is to characterize all possible
$u_1$ which steers 0 at time 0 to $0$ at time $T$ (see \Cref{pro-Gen}).  This is done by
taking the  Fourier transform with respect to time  of the solution $y_1$  and applying
Paley-Wiener's theorem.  Surprisingly, in the case $2k + l \neq 3 \N_*$, if the time $T$
is sufficiently small, there are directions in $\M$ which cannot be reached via $y_2$
(see \Cref{corollary-dir} and \Cref{lem-E}). This is one of the crucial observations in this
paper. Using this observation, we then implement a method to prove
the obstruction for the small-time local null-controllability of the KdV system, see
\Cref{thm-NL}. The idea is to bring the nonlinear context to the one, based on the power series expansion approach,  where the new
phenomenon  is observed (the context of \Cref{corollary-dir}).  To be able to reach the
result as stated in \Cref{thm-main}, we establish several new estimates for the
linear and nonlinear KdV systems using low regularity data (see \Cref{sect-LKdV} for the
linear  and \Cref{lem-kdvNL} for  the nonlinear settings). Their proofs partly involve a
connection between the linear KdV equation and the linear KdV-Burgers equation as previously
used by Bona et al. \cite{Bona09} and inspired by the work of
Bourgain \cite{Bourgain93}, and  Molinet and Ribaud \cite{MR02}. To establish the local controllability for a subclass of critical lengths in  a finite time (\Cref{thm-CP}), we apply again the power series method and use a fixed point argument. The key point here is first to obtain  controls in $H^1(0, T)$ to control directions  which can be reached via the linearized system and second to obtain controls in $H^1(0, T)$ for $y_1$ and $y_2$ mentioned above. The analysis of the first part is based on a modification of the Hilbert Uniqueness Method and the analysis of the second part is again based on the information obtained in  \Cref{corollary-dir} and \Cref{lem-E}. Our fixed point argument is inspired by \cite{CC04, Cerpa07} but is different, somehow simpler, and, more importantly, relies on  the usual Banach fixed point theorem instead of the Brouwer fixed point theorem, which might be interesting to handle nonlinear partial  differential equations such that $\M$ is of infinite dimension, as, for example, in \cite{M18}.

\subsection{Structure of the paper} The paper is organized as follows. \Cref{sect-0-0} is devoted to the study of controls
which steers 0 to 0 (motivated by the system of $y_1$). In \Cref{sect-dir}, we study
attainable directions for small time via the power series approach (motivated by the
system of $y_2$).
The main result in this section is \Cref{pro-monotone} whose consequence (\Cref{corollary-dir}) is crucial in the proof of \Cref{thm-main}. In \Cref{sect-L-KdV}, we established several useful estimates for linear KdV systems. In \Cref{sect-NL-KdV}, we give the proof of \Cref{thm-main}. In fact, we will establish a result (\Cref{thm-NL}),  which implies \Cref{thm-main} and reveals a connection with unreachable directions via the power series expansion method. In \Cref{sect-CP}, we establish the local controllability for the nonlinear KdV system \eqref{intro-sys-KdV} with initial and final  datum in $H^3(0, L) \cap H^1_0(0, L)$  and  controls in $H^1(0, 1)$ for some critical lengths (\Cref{thm-CP}).  In the appendix, we establish various results used  in  \Cref{sect-0-0,sect-dir,sect-L-KdV}.

\section{Properties of  controls steering \texorpdfstring{$0$ at time $0$ to $0$ at
time $T$}{0 at time 0 to 0 at time T}} \label{sect-0-0}

In this section,  we characterize the controls that steer $0$ to $0$ for the linearized KdV system at a given time. This is done by considering the Fourier transform in the $t$-variable and these conditions are written
in terms of Paley-Wiener's conditions.  The
resolvent of $\partial_x^3 + \partial_x$ hence naturally appears during this analysis. We  begin with the discrete property on the spectrum of this operator.

\begin{lemma}\label{lem-EU} Set $D(\mathcal A) = \Big\{v\in H^3(0,L), v(0) = v(L) = v'(L)
= 0 \Big\}$ and let $\mathcal A$ be the unbounded operator on $L^2(0,L)$ with domain
$D(\mathcal A)$ and defined by $\mathcal A v = v''' +v'$ for $v \in
D(\mathcal A) $. The spectrum of $\mathcal A$ is discrete.
\end{lemma}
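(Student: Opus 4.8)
The plan is to show that $\mathcal A$ has \emph{compact resolvent}; discreteness of the spectrum then follows from the standard spectral theory of such operators (the spectrum consists of isolated eigenvalues of finite multiplicity with no finite accumulation point). Two ingredients are needed: a nonempty resolvent set, and a compact inclusion $D(\mathcal A) \hookrightarrow L^2(0,L)$ for the graph norm.

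First I would exhibit a point of $\rho(\mathcal A)$. The key computation is a dissipativity estimate: for $v \in D(\mathcal A)$, integrating $\langle \mathcal A v, v\rangle_{L^2(0,L)} = \int_0^L (v'''+v')\bar v\,dx$ by parts and using $v(0)=v(L)=v'(L)=0$ gives $\mathrm{Re}\,\langle \mathcal A v, v\rangle = \tfrac12 |v'(0)|^2 \ge 0$ (the first-order term contributes $\tfrac12[|v|^2]_0^L=0$, and the third-order term reduces, after two integrations by parts, to $-\mathrm{Re}\int v''\bar v' = \tfrac12|v'(0)|^2$). Consequently, for any $\lambda_0$ with $\mathrm{Re}\,\lambda_0 < 0$, the equation $\mathcal A v = \lambda_0 v$ forces $\mathrm{Re}\,\lambda_0\,\|v\|^2 = \mathrm{Re}\,\langle \mathcal A v, v\rangle \ge 0$, hence $v=0$; thus $\mathcal A - \lambda_0$ is injective. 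Since $\mathcal A - \lambda_0$ is, modulo the three boundary conditions, a third-order linear ODE operator, the associated boundary-value problem is Fredholm of index zero: the solutions of $v'''+v'-\lambda_0 v = f$ form a three-parameter family, and imposing $v(0)=v(L)=v'(L)=0$ yields a $3\times 3$ linear system solvable for every $f$ precisely when the homogeneous problem has only the trivial solution. Injectivity therefore upgrades to bijectivity, so $\lambda_0 \in \rho(\mathcal A)$.

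Second, I would prove the a priori estimate $\|v\|_{H^3(0,L)} \le C\big(\|v\|_{L^2}+\|\mathcal A v\|_{L^2}\big)$ for $v\in D(\mathcal A)$, i.e.\ that the graph norm is equivalent to the $H^3$-norm. Writing $v''' = \mathcal A v - v'$ and invoking the interpolation inequalities $\|v'\|_{L^2}\le \epsilon\|v'''\|_{L^2}+C_\epsilon\|v\|_{L^2}$ (and the analogue for $v''$) on the interval, one absorbs the lower-order terms and bounds $\|v'''\|_{L^2}$, hence $\|v\|_{H^3}$, by the graph norm. Pairing $(\mathcal A-\lambda_0)v=f$ with $v$ and taking real parts gives in addition $\|v\|_{L^2}\le |\mathrm{Re}\,\lambda_0|^{-1}\|f\|_{L^2}$, so $(\mathcal A-\lambda_0)^{-1}$ maps $L^2(0,L)$ boundedly into $H^3(0,L)$. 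Since $H^3(0,L)\hookrightarrow L^2(0,L)$ compactly by Rellich's theorem, the resolvent factors as $L^2 \to H^3 \hookrightarrow L^2$ and is therefore compact, yielding the claim.

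The only mildly delicate point is the elliptic-type $H^3$ a priori estimate, where the interpolation inequalities are used to control the lower-order derivatives by $\|v'''\|_{L^2}$ and $\|v\|_{L^2}$; the dissipativity identity and the compact embedding are then routine. (As an alternative to the whole argument one could analyze the characteristic roots of $r^3+r=\lambda$ and the boundary determinant $\Delta(\lambda)$ directly, showing it is an entire function that is not identically zero so that its zero set—the spectrum—is discrete; but the compact-resolvent route is cleaner and avoids case distinctions in the root structure.)
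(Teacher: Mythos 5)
Your proof is correct, but it follows a genuinely different route from the paper's. The paper never invokes compact-resolvent theory: it sets up a shooting problem, letting $U_{(z)}$ solve $U_{(z)}'''+U_{(z)}'+zU_{(z)}=0$ with $U_{(z)}(L)=U_{(z)}'(L)=0$, $U_{(z)}''(L)=1$, so that $\theta(z):=U_{(z)}(0)$ is an entire function of $z$; uniqueness for the resolvent equation holds wherever $\theta(z)\neq 0$ (two solutions differ by a multiple of $U_{(z)}$), existence follows by correcting a Cauchy solution launched from $x=L$ with a multiple of $U_{(z)}$, and $\theta\not\equiv 0$ is verified at $z=1$ by exactly the integration-by-parts identity underlying your dissipativity estimate (the paper obtains $\tfrac12 U_{(1)}'(0)^2+\int_0^L U_{(1)}^2=0$). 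Discreteness then comes from complex analysis: zeros of a nontrivial entire function are isolated. So both proofs hinge on the same identity, but you exploit it on a half-plane $\{\Re\lambda_0<0\}$ to produce a point of the resolvent set, then add the ODE index-zero argument for surjectivity, the elliptic estimate $\|v\|_{H^3}\le C\big(\|v\|_{L^2}+\|\mathcal A v\|_{L^2}\big)$, and Rellich's theorem to get a compact resolvent and quote abstract spectral theory, whereas the paper uses it once, at a single point, to show an entire function is nontrivial. Your route yields slightly more in the abstract (isolated eigenvalues of finite algebraic multiplicity), and all its steps (the dissipativity computation, the $3\times 3$ solvability argument, the interpolation inequalities) are sound. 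What it does not provide is the identification of the spectrum with the zero set of an explicit entire function, which is precisely what the rest of the paper builds on (the functions $H$, $\det Q$, and \Cref{rem-detQ-realroots}, feeding into the Paley--Wiener arguments). Finally, the alternative you sketch at the end is close to the paper's actual proof, but with a notable difference: the shooting function $\theta$ is entire with no case distinctions, while a boundary determinant written in terms of the characteristic roots $\lambda_j(z)$ requires handling multiple roots---this is exactly why the paper later normalizes $P$ and $\det Q$ by the Vandermonde factor $\Xi$.
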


\begin{proof}
Since $\mathcal A$ is closed, we only have to prove that there exists a discrete set $\cD
\subset \mC$ such that for $z \in \mC \setminus \cD$ and for $f \in L^2(0, L)$, there
exists a unique solution $v \in H^3(0, L)$ of the system
\begin{equation}
\label{eqvf}
\left\{
\begin{array}{l}
v''' + v' +zv = f \mbox{ in } (0, L),
\\[6pt]
v(0) = v(L) = v'(L) = 0.
\end{array}
\right.
\end{equation}

\step{An auxiliary shooting problem} For  each $z\in \C$, let
$U_{(z)} \in
C^3(\R;\C)$ be the unique solution of the Cauchy problem
\begin{equation}\label{defUz}
U_{(z)}''' + U_{(z)}' + zU_{(z)}=0 \mbox{ in } (0, L), \quad U_{(z)}'(L)=U_{(z)}(L)=0, \quad  U_{(z)}''(L)=1.
\end{equation}
Let $\theta \colon  \C \to \C$ be defined by $\theta(z)=U_{(z)}(0)$. Then
$\theta$ is an entire function. We claim that this function does not vanish
identically, and   $\cD: = \theta^{-1}(0)$ is therefore a discrete set. Indeed,
let us assume that $U_{(1)}(0) = \theta(1) =0$. Multiplying \eqref{defUz} with $z=1$ (the equation of $U_{(1)}$) by
the (real) function $U_{(1)}$ and integrating by parts on $[0,L]$, one gets
\begin{equation}\label{U1byparts}
  \frac{1}{2} U_{(1)}'(0)^2 + \int_0^L U_{(1)}^2 (x) \diff x =0,
\end{equation}
which implies $U_{(1)} = 0$ in $[0, L]$.  This is in contradiction with $U_{(1)}''(L)=1$.

\step{Uniqueness}
Let $z\notin \cD$, i.e., $\theta(z) = U_{(z)} (0) \neq 0$.
Assume that  $v_1,v_2\in H^3(0,L)$ are two solutions of~\eqref{eqvf}. Set $U = v_1-v_2$. Then  $U''' +U' +zU = 0$ and  $U(L) = U'(L) =
0$. It follows that $U = U''(L) U_{(z)}$ in $[0, L]$. So, $U(0) = U''(L) U_{(z)} (0) =
U''(L)\theta(z)$. Since $\theta(z)\neq 0$ and $U(0) = v_1(0) - v_2(0) = 0$, we conclude that $U''(L) =
0$. Hence $U = 0$ in $[0, L]$, which implies the uniqueness.

\step{Existence} Let $z\notin \cD$ and $f \in L^2(0, L)$. Let $V\in H^3(0,L)$ be the
unique solution of the Cauchy problem
\begin{equation}
\label{eqVfCauchy}
\left\{
\begin{array}{l}
V''' + V' + zV = f \text{ in } (0, L), \\[6pt]
V(L) = V'(L) = V''(L) = 0.
\end{array}
\right.
\end{equation}
Set  $v = V-V(0)(\theta(z))^{-1}U_{(z)}$ in $[0, L]$. Then $v$ belongs to $H^3(0,L)$ and
satisfies the differential equation $v'''+v'+zv =f$,  and the boundary conditions $ v(L)
=  0$, $ v'(L) =  0$,  and $v(0) = V(0) - V(0) =0$. Thus $v$ is a solution
of~\eqref{eqvf}.
\end{proof}

Before characterizing controls steering $0$ at time $0$ to $0$ at time $T$, we  introduce

\begin{definition}\label{def:Q-P}
For $z \in \mC$,  let $(\lambda_j)_{1\leq j \leq 3} =  \big(\lambda_j(z) \big)_{1\leq j \leq 3}$ be the three solutions repeated with the multiplicity of
 \begin{equation}\label{eq-lambda}
  \lambda^3 + \lambda + i z = 0.
 \end{equation}
Set
 \begin{equation}\label{eq-defQ}
  Q = Q (z) : = \sum_{j=1}^3 (\lambda_{j+1} - \lambda_j) e^{\lambda_{j} L  + \lambda_{j+1} L
} = 
 \begin{pmatrix}
 1 & 1 & 1\\
 e^{\lambda_1 L } & e^{\lambda_2 L } & e^{\lambda_3 L }\\
 \lambda_1 e^{\lambda_1 L } & \lambda_2 e^{\lambda_2 L } & \lambda_3 e^{\lambda_3 L }
 \end{pmatrix},
\end{equation}
\begin{equation}\label{def-P}
P =  P(z) : =  \sum_{j=1}^3  \lambda_j(e^{\lambda_{j+2} L } - e^{\lambda_{j+1} L})
 = \det
 \begin{pmatrix}
  1&1&1 \\
  e^{\lambda_1L}&e^{\lambda_2L}&e^{\lambda_3L}\\
  \lambda_1& \lambda_2& \lambda_3
 \end{pmatrix},
 \end{equation}
 and
 \begin{equation}\label{def-Xi}
\Xi = \Xi(z) := -  (\lambda_2 - \lambda_1) (\lambda_3 - \lambda_2) (\lambda_1 - \lambda_3) =
\det \begin{pmatrix}1&1&1\\ \lambda_1&\lambda_2&\lambda_3\\
\lambda_1^2&\lambda_2^2&\lambda_3^2\end{pmatrix}, 
\end{equation}
with the convention $\lambda_{j+3} = \lambda_{j}$ for $j \ge 1$. 
\end{definition}

\begin{remark}\label{rk:PG_holomorphic}
The matrix $Q$ and the quantities $P$ and $\Xi$  are antisymmetric  with respect
to $\lambda_j$ ($j=1, 2, 3$), and their definitions depend on a choice of  the order of  $(\lambda_1, \lambda_2, \lambda_3)$. Nevertheless, we later consider a product  of either $P$, $\Xi$,  or $\det Q$ with another antisymmetric function of
$(\lambda_j)$,  or deal with $|\det Q|$,  and these quantities therefore make sense (see e.g. \eqref{def-hy}, \eqref{def-dxhy}). The definitions of $P$, $\Xi$,  and $Q$ are only  understood in these contexts.
\end{remark}

In what follows, for an appropriate function $v$ defined
on $\mR_+ \times (0, L)$, we extend $v$ by $0$ on $\mR_-\times (0,L)$ and we denote by
$\hat v$ its Fourier transform with respect to $t$, i.e., for $z \in \mC$,
\begin{equation*}
\hat v(z, x) = \frac{1}{\sqrt{2 \pi} }\int_0^{+\infty} v(t, x) e^{- i z t} \diff t.
\end{equation*}

We  have

\begin{lemma} \label{lem-form-sol} Let $u \in L^2(0, + \infty)$  and let $y \in C
\big([0,
+ \infty); L^2(0, L) \big) \cap L^2_{\loc}\big( [0, + \infty); H^1(0, L) \big)$ be the
unique solution of
\begin{equation}\label{sys-y}\left\{
\begin{array}{cl}
y_t (t, x) + y_x (t, x) + y_{xxx} (t, x) = 0 &  \mbox{ in } (0, +\infty) \times (0, L),
\\[6pt]
y(t, x=0) = y(t, x=L) = 0 & \mbox{ in }  (0, +\infty), \\[6pt]
y_x(t , x= L) = u(t) & \mbox{ in } (0, +\infty),
\end{array}\right.
\end{equation}
with
\begin{equation}\label{IC-y}
y(t = 0, \cdot) =  0 \mbox{ in } (0, L).
\end{equation}
Then, outside of a discrete set $z\in \mR$, we have
\begin{equation}\label{def-hy}
\hy (z, x) =  \frac{\hu }{\det Q}  \sum_{j=1}^3 (e^{\lambda_{j+2} L } - e^{\lambda_{j+1}
L
}) e^{\lambda_j x}\mbox{ for a.e. } x \in (0, L),
\end{equation}
and in particular,
\begin{equation}\label{def-dxhy}
 \partial_x\hy(z,0)  = \frac{\hu (z) P(z)}{\det Q(z)}.
\end{equation}
\end{lemma}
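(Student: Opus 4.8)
The plan is to compute the Fourier transform in time of the system \eqref{sys-y} and reduce it to a boundary-value ODE in $x$ whose solution can be written explicitly via the characteristic roots $\lambda_j(z)$. Taking the Fourier transform in $t$ of the equation $y_t + y_x + y_{xxx}=0$ and using the zero initial condition \eqref{IC-y} (so that the boundary term from integration by parts in time vanishes), I would obtain
\begin{equation*}
i z \, \hy(z,x) + \partial_x \hy(z,x) + \partial_x^3 \hy(z,x) = 0 \quad \text{for } x \in (0,L),
\end{equation*}
together with the transformed boundary conditions $\hy(z,0) = \hy(z,L) = 0$ and $\partial_x \hy(z,L) = \hu(z)$. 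The characteristic equation of this third-order linear ODE in $x$ is exactly $\lambda^3 + \lambda + iz = 0$, which is \eqref{eq-lambda}, so the $\lambda_j(z)$ are precisely its roots. For $z$ outside a discrete set (where the three roots are distinct, and where $\det Q(z)\neq 0$), the general solution is $\hy(z,x) = \sum_{j=1}^3 c_j e^{\lambda_j x}$ for constants $c_j = c_j(z)$ to be determined from the boundary conditions.

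The next step is to solve the $3\times 3$ linear system for $(c_1,c_2,c_3)$. The three boundary conditions $\hy(z,0)=0$, $\hy(z,L)=0$, $\partial_x\hy(z,L)=\hu(z)$ read
\begin{equation*}
\sum_{j=1}^3 c_j = 0, \qquad \sum_{j=1}^3 c_j e^{\lambda_j L} = 0, \qquad \sum_{j=1}^3 c_j \lambda_j e^{\lambda_j L} = \hu(z).
\end{equation*}
The coefficient matrix is exactly $Q(z)$ as defined in \eqref{eq-defQ}, with right-hand side $(0,0,\hu)\tr$. By Cramer's rule, $c_j = \hu \, (\det Q)^{-1} (\text{cofactor})$, and expanding the cofactor of the $(3,j)$ entry gives the $2\times 2$ minors $e^{\lambda_{j+2}L} - e^{\lambda_{j+1}L}$ (up to the alternating sign, which is absorbed by the cyclic/antisymmetric bookkeeping of \Cref{rk:PG_holomorphic}). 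This yields
\begin{equation*}
\hy(z,x) = \frac{\hu}{\det Q} \sum_{j=1}^3 (e^{\lambda_{j+2}L} - e^{\lambda_{j+1}L}) e^{\lambda_j x},
\end{equation*}
which is \eqref{def-hy}. Formula \eqref{def-dxhy} then follows by differentiating in $x$ and evaluating at $x=0$: $\partial_x\hy(z,0) = \hu\,(\det Q)^{-1}\sum_j \lambda_j(e^{\lambda_{j+2}L}-e^{\lambda_{j+1}L})$, and the sum is recognized as $P(z)$ from \eqref{def-P} after matching the antisymmetric conventions.

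The main obstacle, I expect, is not the algebra but the justification of taking the Fourier transform and of the representation outside a discrete set. One must argue that $\hy(z,\cdot)$ genuinely solves the transformed ODE in a suitable sense given only the regularity $y \in C([0,\infty);L^2) \cap L^2_{\loc}([0,\infty);H^1)$; this requires controlling integrability in $t$ so the transform is well-defined and the boundary data $\hu(z)$, $\hy(z,0)$, $\hy(z,L)$ make sense (a trace argument in $x$, plus confirming the time-integration-by-parts boundary term vanishes by the initial condition and decay). The phrase \emph{outside of a discrete set} must be pinned to the set where $\det Q(z) = 0$ together with the branch points where the roots $\lambda_j(z)$ collide (the zeros of $\Xi(z)$); there the representation degenerates, but since $\det Q$ and $\Xi$ are, after the antisymmetric normalization of \Cref{rk:PG_holomorphic}, holomorphic and not identically zero (tied to the discreteness already established in \Cref{lem-EU}), their zero set is discrete. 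The remaining care is purely to verify that the cofactor expansion signs are consistent with the cyclic indexing $\lambda_{j+3}=\lambda_j$ so that \eqref{def-hy} and \eqref{def-dxhy} hold with the stated signs.
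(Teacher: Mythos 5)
Your proposal is correct and follows essentially the same route as the paper's own proof: Fourier transform in time to reduce \eqref{sys-y} to the ODE $iz\hy + \hy_x + \hy_{xxx} = 0$ with the transformed boundary conditions, write $\hy(z,\cdot)$ as a combination of $e^{\lambda_j x}$, and solve the resulting $3\times 3$ system $Q(a_1,a_2,a_3)\tr = (0,0,\hu)\tr$, with the discrete exceptional set controlled by the collision of roots and by \Cref{lem-EU}. The paper handles uniqueness of the representation via $-iz \notin \Sp(\mathcal A)$ rather than invoking $\det Q(z) \neq 0$ directly, but these are equivalent (cf.\ \Cref{rem-detQ-realroots}), so there is no substantive difference.
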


\begin{remark}\label{rem-form-sol} Assume that $\hu(z, \cdot)$ is well-defined for $z
\in \mC$ (e.g. when $u$ has a compact support). Then the conclusions of
\Cref{lem-form-sol} hold  outside of a discrete set $z\in \mC$.
\end{remark}

\begin{proof} From the system of $y$, we have
\begin{equation}\label{sys-hy}
\left\{\begin{array}{cl}
i z \hy(z, x) + \hy_x (z, x) + \hy_{xxx} (z, x) = 0 &  \mbox{ in } \mR \times  (0, L),
\\[6pt]
\hy(z, x=0) = \hy(z, x=L) = 0 & \mbox{ in }  \mR, \\[6pt]
\hy_x(z, x = L) = \hu(z) & \mbox{ in } \mR.
\end{array}\right.
\end{equation}
Taking into account the equation of $\hy$, we search the solution of the form
\begin{equation*}
\hy(z, \cdot) = \sum_{j=1}^3 a_j e^{\lambda_j x},
\end{equation*}
where $\lambda_j = \lambda_j(z) $ with $j=1, 2, 3$ are defined in \Cref{def:Q-P}.

According to the theory of
ordinary differential equations with constant coefficients, this is possible if the
equation $\lambda^3 + \lambda + iz = 0$ has three distinct solutions, i.e., if the
discriminant $-4+27z^2$ is not $0$. Moreover, if $- iz\notin \Sp(\mathcal A)$, this solution
is unique. Thus, by \Cref{lem-EU}, outside a discrete set in $\mR$, $\hy(z,\cdot)$ can be written in this form
in a unique way.
Using the boundary conditions for $\hy$,  we require that
\begin{equation*}
\left\{\begin{array}{cl}
\sum_{j=1}^3 a_j = 0, \\[6pt]
\sum_{j=1}^3 e^{\lambda_j L } a_j = 0, \\[6pt]
\sum_{j=1}^3 \lambda_j e^{\lambda_j L } a_j = \hu.
\end{array}\right.
\end{equation*}
This implies, with $Q = Q(z)$ defined in \Cref{def:Q-P},
\begin{equation}\label{eq-systQ}
Q (a_1, a_2, a_3)\tr = (0, 0, \hu)\tr.
\end{equation}
It follows that
\begin{equation*}
a_j = \frac{\hu }{\det Q} \big(e^{\lambda_{j+2} L } - e^{\lambda_{j+1} L } \big).
\end{equation*}
This yields
\begin{equation}\label{form-hy}
\hy(z, x) =  \frac{\hu }{\det Q}  \sum_{j=1}^3 (e^{\lambda_{j+2} L } -
e^{\lambda_{j+1} L
}) e^{\lambda_j x}.
\end{equation}
We thus obtain
\begin{equation}
\partial_x \hy(z, 0) =  \frac{\hu (z) P(z) }{\det Q (z)}.\qedhere
\end{equation}
\end{proof}

As mentioned in \Cref{rk:PG_holomorphic}, the maps $P$ and $\det Q$ are antisymmetric functions with respect to $\lambda_j$. It is hence convenient to consider $\partial_x
\hy(z, 0)$ under the form
\begin{equation}\label{eq:hyx_meromorphic}
\partial_x \hy(z, 0) = \frac{\hu (z) G(z) }{H(z)},
\end{equation}
where, with $\Xi$ defined in \eqref{def-Xi},
\begin{equation}\label{def-GH}
G(z) =  P(z)/\Xi(z) \quad \mbox{ and } \quad H(z) = \det Q(z)/\Xi(z).
\end{equation}

Concerning the functions  $G$ and $H$, we have

\begin{lemma}\label{lem-hol} The functions $G$ and $H$ defined in \eqref{def-GH}  are entire functions.
\end{lemma}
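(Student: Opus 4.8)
The plan is to exploit the single structural fact flagged in \Cref{rk:PG_holomorphic}: for fixed $z$, each of $P(z)$, $\det Q(z)$ and $\Xi(z)$ is an entire function of the three roots $(\lambda_1,\lambda_2,\lambda_3)$ of $\lambda^3+\lambda+iz=0$ that is \emph{antisymmetric} under permutations. Consequently the ratios $G=P/\Xi$ and $H=\det Q/\Xi$ are \emph{symmetric} functions of the roots wherever $\Xi\neq 0$; in particular they are independent of the labelling of the $\lambda_j$ and are genuine single-valued functions of $z$. So the whole task reduces to showing that a symmetric-in-the-roots expression, built from the roots of a cubic whose coefficients depend entirely (indeed polynomially) on $z$, is entire in $z$.

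The concrete route I would take is to reduce everything to symmetric polynomials by expanding the exponentials. Writing $e^{\lambda_j L}=\sum_{m\geq 0}(\lambda_j L)^m/m!$ and using the multilinearity of the determinant in the rows containing exponentials, $P$ and $\det Q$ become absolutely convergent series of \emph{generalized Vandermonde determinants} $\det(\lambda_i^{a_j})_{i,j}$ carrying factorial weights $L^m/m!$ (two such sums, over $m$ and $k$, for $\det Q$). Each generalized Vandermonde is antisymmetric, hence divisible by $\Xi$ with quotient a Schur polynomial, i.e.\ a symmetric polynomial in $(\lambda_1,\lambda_2,\lambda_3)$; for $P$ the exponent multiset is $\{0,m,1\}$, so only $m\geq 2$ contributes and the quotient is the complete homogeneous symmetric polynomial $h_{m-2}$. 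By the fundamental theorem of symmetric polynomials together with Vieta's formulas for $\lambda^3+\lambda+iz$, namely $e_1=0$, $e_2=1$, $e_3=-iz$, each quotient is a genuine polynomial in $z$. Dividing the two series by $\Xi$ term by term therefore exhibits
\[
G(z)=\sum_{n} c_n\, p_n(z),\qquad H(z)=\sum_{m,k} c_{m,k}\, q_{m,k}(z),
\]
as series of polynomials in $z$ with rapidly decaying factorial coefficients.

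It then remains to check that these series converge locally uniformly, so that the limits are entire as locally uniform limits of polynomials. On a compact set of $z$ the roots $\lambda_j(z)$ stay bounded (roots of a monic cubic with bounded coefficients), so each Schur-polynomial quotient is bounded by $R^{m-2}$ times a combinatorial factor at most quadratic in $m$; the factorial weights $L^m/m!$ dominate, giving convergence on compacta. I would carry out this estimate explicitly for $G$ (and the analogous double sum for $H$) to conclude entireness.

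The delicate point, and the one I expect to be the main obstacle, is precisely the division by $\Xi$ at the discriminant locus $\{-4+27z^2=0\}$, where two roots collide and $\Xi$ vanishes. A purely analytic argument using local holomorphy of the roots and triviality of the monodromy (guaranteed by symmetry) would only produce $G,H$ as holomorphic functions on $\mC$ minus these two points, and would then require a removable-singularity step resting on the general fact that an antisymmetric entire function of the roots is divisible by the Vandermonde with entire quotient. The advantage of the power-series/Schur reduction above is that it performs this division algebraically, term by term, so the putative singularities at the collision points never arise and no separate Riemann removability argument is needed; the only genuine care required is justifying the interchange of the infinite summation with the division by $\Xi$, which follows from the absolute and locally uniform convergence established in the previous step.
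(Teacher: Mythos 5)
Your proof is correct, but it takes a genuinely different route from the paper's. The paper stays analytic: it invokes \Cref{pro-S} to conclude that $z\mapsto\Xi(z)P(z)$, $z\mapsto\Xi(z)\det Q(z)$ and $z\mapsto\Xi(z)^2$ are entire, observes that $\Xi$ vanishes only at $z_0=\pm 2/(3\sqrt 3)$, computes the square-root expansions \eqref{lem-hol-lambda} of the roots there to show that these are \emph{simple} zeros of $\Xi^2$, and notes that $P$ and $\det Q$ also vanish at these points (two roots collide and both quantities are antisymmetric), so that $G=\Xi P/\Xi^2$ and $H=\Xi\det Q/\Xi^2$ have removable singularities --- precisely the removability step you describe as the alternative you are avoiding. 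Your Schur-polynomial expansion instead performs the division by $\Xi$ algebraically, term by term: each generalized Vandermonde divided by $\Xi$ is a symmetric polynomial in the roots, hence by Vieta ($e_1=0$, $e_2=1$, $e_3=-iz$) a polynomial in $z$, and the factorial weights $L^m/m!$ beat the polynomial-in-degree monomial counts and the bound $R^{m-2}$ on compacta, giving a locally uniformly convergent series of polynomials. This is complete and self-contained (no appendix lemma on symmetric holomorphic functions, no local analysis at the discriminant points), and it even produces an explicit series representation of $G$ and $H$. What the paper's route buys in exchange is economy within the article: \Cref{pro-S} is stated once in the appendix, and the expansions \eqref{lem-hol-lambda} are not throwaway computations --- they are reused later (e.g.\ in the proof of \Cref{lem-detQ}), which your series representation would not replace. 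Your closing remark about the interchange of summation and division is the right justification, and the only point you leave schematic --- the analogous double-sum estimate for $H$, where the Schur polynomials attached to exponent triples $\{0,m,k+1\}$ carry coefficient counts polynomial in $m+k$ --- goes through exactly as you indicate.
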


\begin{proof} Note that the maps $z\mapsto \Xi(z)
P(z)$, $z\mapsto \Xi(z) \det Q(z)$ and $z\mapsto \Xi(z)^2$ are symmetric functions of the
$\lambda_j$ and are thus well-defined, and even entire functions (see \Cref{pro-S} in
\Cref{sec:symmetric_holomorphic}). According to the definition of $\Xi$, $\Xi(z_0) = 0$ if and only if $X^3 + X +i z_0$ has a
double root, i.e.  $z_0 = \pm 2/ (3\sqrt 3)$. Simple computations prove that when
$\epsilon$ is small,
\begin{equation}\label{lem-hol-lambda}
\left\{\begin{aligned}
 \lambda_1(z_0+ \eps) &=
 \mp \frac{i}{\sqrt 3} + \frac{\sqrt{\mp i}}{3^{1/4}} \sqrt \epsilon + O(\eps),  \\[6pt]
 \lambda_2(z_0 + \eps) &=
 \mp \frac{i}{\sqrt 3}  - \frac{\sqrt{\mp i}}{3^{1/4}} \sqrt \eps + O(\eps), \\[6pt]
 \lambda_3(z_0+\eps) &=  \pm \frac{2i}{\sqrt 3} + \frac{ \eps}{3} + O(\eps^2).
 \end{aligned}\right.
\end{equation}
Indeed, the behavior of $\lambda_3$ follows immediately from the expansion of $\lambda_3$ near $ \pm \frac{2i}{\sqrt 3} $. The behavior of $\lambda_1$ and $\lambda_2$ can be then verified using, with $\Delta = - 3 \lambda_3^2 - 4$,
\begin{equation*}
\lambda_1 = \frac{-\lambda_3 + \sqrt{\Delta}}{2} \quad \mbox{ and } \quad \lambda_2 = \frac{-\lambda_3 - \sqrt{\Delta}}{2}.
\end{equation*}
It follows that  that $\Xi^2(z_0+\eps) =
c_\pm \eps + O(\eps^2)$ for some $c_\pm \neq 0$. This in turn implies that
$z_0 = \pm 2/(3\sqrt 3)$ are simple zeros of $\Xi^2$. When $X^3+X+iz$ has a double
root, the definitions of $P$ and $\det Q$ (Eq.~\eqref{eq-defQ} and~\eqref{def-P}) imply
\[
|P(z_0)| = \lvert\det Q (z_0)\rvert = 0 \mbox{ for } z_0 = \pm 2/ (3\sqrt 3).
\]
The conclusion follows.
\end{proof}

\begin{remark} \label{rem-detQ-realroots} It is interesting to note  that
\begin{enumerate}
\item  \big($H(z) = 0$ and $z \neq  \pm 2 / (3 \sqrt{3}) \big)$ if and only if $- iz\in \Sp(\mathcal A)$.

\item  $ i z \in \Sp(\mathcal A)$ and $z$ is real  if and only if $L = 2 \pi \sqrt{\frac{k^2 + k l + l^2}{3}}$, and
\begin{equation}\label{coucou}
z =  \frac{(2k + l)(k-l)(2 l + k)}{3 \sqrt{3}(k^2 + kl + l^2)^{3/2}},
\end{equation}
for some $k, l \in \N$ with $1 \le l \le k$.
\end{enumerate}
Indeed, if $L = 2 \pi \sqrt{\frac{k^2 + k l + l^2}{3}}$ and $z$ is given by the RHS of
\eqref{coucou}, then, from \cite{Rosier97}, $i z \in \Sp(\mathcal A)$. On the other hand,
if $z$ is real and $i z \in \Sp(\mathcal A)$, then, by an integration by parts, the
corresponding eigenfunction $w$ also satisfies the condition $w_x(0) = 0$. It follows
from \cite{Rosier97} that  $L = 2 \pi \sqrt{\frac{k^2 + k l + l^2}{3}}$ and  $z$ is given
by \eqref{coucou} for some $k, l \in \N$ with $1 \le l \le k$.  We finally note that for
$z \neq \pm 2/ (3\sqrt 3)$,
the solutions of the ordinary differential equation $u''' +u' +iz u = 0$
are of the form $u(x) = \sum_{j=1}^3 a_j e^{\lambda_j x}$. This implies that $Q (a_1, a_2, a_3)\tr = (0, 0, 0)\tr$ if $u(0) = u(L) = u'(L) = 0$. Therefore, for $z\neq \pm 2/ (3\sqrt 3)$, $- i z$ is an eigenvalue of
$\mathcal A$ if and only if  $\lvert\det Q(z)\rvert = 0$, i.e., $H (z)= 0$. We finally
note that, $\pm 2i / (3\sqrt 3)$ is not a pure imaginary eigenvalue of ${\mathcal A}$
since, for $k \ge l \ge 1$,
\[
0 \le \frac{(2k + l)(k-l)(2 l + k)}{3 \sqrt{3}(k^2 + kl + l^2)^{3/2}} = \frac{(2k + l)(k^2  + k l - 2 l^2)}{3 \sqrt{3}(k^2 + kl + l^2)^{3/2}} <  \frac{(2k + l)}{3 \sqrt{3}(k^2 + kl + l^2)^{1/2}} < \frac{2}{3 \sqrt{3}}.
\]
\end{remark}

We are ready to  give the characterization of the controls
steering $0$ to $0$, which is the starting point of our analysis.

\begin{proposition} \label{pro-Gen} Let $L>0$, $T>0$, and $u \in L^2(0, + \infty)$. Assume that   $u$ has a compact support in $[0, T]$,  and $u$  steers $0$ at the time $0$ to $0$ at the time $T$, i.e., the unique solution $y$ of \eqref{sys-y} and \eqref{IC-y} satisfies $y(T, \cdot) = 0$ in $(0, L)$.  Then $\hu$ and  $\hu G/ H$  satisfy the assumptions of Paley-Wiener's theorem concerning the support in $[-T, T]$, i.e.,
\[
\hu \mbox{ and } \hu G/H \mbox{ are entire functions},
\]
and
\[
|\hu(z)|  + \left|\frac{\hu G(z)}{H(z)} \right| \le C e^{T| \Im(z)|},
\]
for some positive constant $C$.
\end{proposition}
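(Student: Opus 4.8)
The plan is to treat the two functions $\hu$ and $\hu G/H$ separately. The bound on $\hu$ is immediate and does not use the controllability hypothesis: since $u$ is supported in $[0,T]$, for every $z\in\mC$ and every $t\in[0,T]$ one has $|e^{-izt}|=e^{t\,\Im z}\le e^{T|\Im z|}$, so that
\[
|\hu(z)|\le \frac{1}{\sqrt{2\pi}}\int_0^T|u(t)|\,e^{t\,\Im z}\,dt\le \frac{e^{T|\Im z|}}{\sqrt{2\pi}}\int_0^T|u(t)|\,dt=:C\,e^{T|\Im z|},
\]
and the same computation shows that $z\mapsto\hu(z)$ is entire (the integrand is entire in $z$ and the integration is over a compact set). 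Thus the first term already satisfies the conclusion, and the whole difficulty lies in $\hu G/H$.

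For the second term the idea is to identify $\hu G/H$ with the Fourier transform of the boundary trace $t\mapsto \partial_x y(t,0)$, and to prove that this trace is an integrable function supported in $[0,T]$. By \Cref{lem-form-sol} and \eqref{def-dxhy}, together with \eqref{def-GH} and \Cref{rem-form-sol}, we have, outside a discrete subset of $\mC$,
\[
\partial_x\hy(z,0)=\frac{\hu(z)\,P(z)}{\det Q(z)}=\frac{\hu(z)\,G(z)}{H(z)},
\]
and $\partial_x\hy(z,0)$ is nothing but $\widehat{\partial_x y(\cdot,0)}(z)$, since differentiation in $x$ and evaluation at $x=0$ commute with the Fourier transform in $t$. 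Hence it suffices to show that $\partial_x y(\cdot,0)$ lies in $L^1$ with support in $[0,T]$: its Fourier transform is then entire and bounded by $Ce^{T|\Im z|}$ exactly as above, and since it coincides with the meromorphic function $\hu G/H$ off a discrete set, i.e. on a set with accumulation points, the identity theorem forces the two to agree as meromorphic functions; in particular the zeros of $H$ are removable singularities of $\hu G/H$, so that $\hu G/H$ is entire and obeys the stated bound.

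It remains to establish the two required properties of the trace. First, $\partial_x y(\cdot,0)$ is supported in $[0,T]$: since $\supp u\subset[0,T]$ and $y(T,\cdot)=0$ by hypothesis, the restriction of $y$ to $t\ge T$ solves \eqref{sys-y} with zero control and zero data at time $T$, so by uniqueness $y(t,\cdot)=0$ for all $t\ge T$, whence $\partial_x y(t,0)=0$ for $t\ge T$ (recall also $y\equiv 0$ for $t<0$). Second, the sharp (\emph{hidden}) trace regularity $\partial_x y(\cdot,0)\in L^2(0,T)$ follows from the multiplier identity obtained by multiplying the equation in \eqref{sys-y} by $y$ and integrating over $(0,L)$; using $y(t,0)=y(t,L)=0$ and $y_x(t,L)=u(t)$ one gets
\[
\frac{d}{dt}\int_0^L y(t,x)^2\,dx+\big(\partial_x y(t,0)\big)^2=u(t)^2,
\]
whence, after integrating in $t$ and using $y(0,\cdot)=0$,
\[
\int_0^T\big(\partial_x y(t,0)\big)^2\,dt\le \int_0^T u(t)^2\,dt<\infty .
\]
Being square integrable with compact support, $\partial_x y(\cdot,0)\in L^1(0,T)$, as needed.

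The main obstacle is this last analytic input. The multiplier computation is only formal for the energy-space solution $y\in C([0,\infty);L^2)\cap L^2_{\loc}([0,\infty);H^1)$, because the trace of $\partial_x y$ at $x=0$ is not a priori defined for $H^1$ data; it must be justified by a regularization argument, approximating $u$ by smooth controls and passing to the limit with the identity above used as an a priori estimate. One must also reconcile the two meanings of $\partial_x\hy(z,0)$ — the $x$-derivative at $0$ of the explicit exponential-sum formula for $\hy(z,\cdot)$, and the Fourier transform in $t$ of the now well-defined trace $\partial_x y(\cdot,0)$ — which again rests on the trace regularity just established. Once these points are in place, the entirety of $\hu G/H$ follows from the removable-singularity argument above (the zeros of $H$, located at the purely imaginary spectrum of $\mathcal A$, see \Cref{rem-detQ-realroots}, are cancelled because $\widehat{\partial_x y(\cdot,0)}$ is entire), and the exponential bound is precisely the Paley--Wiener estimate for the compactly supported integrable trace.
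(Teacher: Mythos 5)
Your proposal is correct and follows essentially the route the paper intends: the paper dismisses the proof as a direct consequence of \Cref{lem-form-sol} and Paley--Wiener's theorem, and what you have done is supply exactly those details --- Paley--Wiener applied to the compactly supported control $u$ on one hand, and to the boundary trace $\partial_x y(\cdot,0)$ on the other, the latter being supported in $[0,T]$ by uniqueness and square-integrable by the hidden-regularity multiplier identity, so that its transform coincides with the meromorphic function $\hu G/H$ off a discrete set and forces it to be entire with the stated bound. The regularization points you flag (justifying the trace identity and the commutation of $\partial_x\big|_{x=0}$ with the time Fourier transform for energy-space solutions) are standard and are precisely what the paper leaves to the reader.
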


Here and in what follows, for a complex number $z$, $\Re(z)$, $\Im(z)$, and $\bar z$ denote the real part, the imaginary part, and the conjugate of $z$, respectively.

\begin{proof}\Cref{pro-Gen} is a consequence of \Cref{lem-form-sol} and Paley-Wiener's theorem, see e.g. \cite[19.3 Theorem]{Rudin-RC}. The proof is clear from the analysis above in this section and left to the reader.
\end{proof}

\section{Attainable directions for small time}\label{sect-dir}

In this section, we investigate controls which steer the linear KdV equation from $0$ to
$0$ in some time $T$, and a quantity related to  the quadratic order in the power expansion of the nonlinear
KdV equation behaves. Let  $u \in L^2(0,  + \infty)$ and denote $y$ the corresponding solution
of the linear KdV equation~\eqref{sys-y}. We assume the initial condition to be $0$ and
that $y$ satisfies $y(t, \cdot) = 0$ in $(0, L)$ for $t \ge T$. We have, by
\Cref{lem-form-sol} (and also \Cref{rem-form-sol}), for $z \in \mC$ outside  a discrete
set,
\begin{equation}\label{def-y}
\hat y(z, x) = \hat u(z) \frac{\sum_{j=1}^3(e^{\lambda_{j+1} L } - e^{\lambda_{j} L })e^{\lambda_{j+2} x} }{\sum_{j=1}^3 (\lambda_{j+1} - \lambda_{j}) e^{-\lambda_{j+2} L }}.
\end{equation}
Recall that  $\lambda_j = \lambda_j(z)$ for $j=1, \, 2, \, 3$  are the three solutions of
the equation
\begin{equation}\label{eq-lambda-j}
x^3 + x = -i z \mbox{ for } z \in \mC.
\end{equation}
Let $\eta_1, \, \eta_2, \,  \eta_3 \in i \mR$, i.e.,  $\eta_j \in \mC$ with $\Re(\eta_j) = 0$ for $j=1, \, 2, \, 3$. Define
\begin{equation}\label{def-varphi}
\varphi(x) = \sum_{j=1}^3 (\eta_{j+1} - \eta_{j}) e^{\eta_{j+2} x} \mbox{ for } x \in [0, L],
\end{equation}
with the convention $\eta_{j+3} = \eta_j$ for $j \ge 1$. The following assumption on $\eta_j$ is used repeatedly  throughout the paper:
\begin{equation}\label{pro-eta}
e^{\eta_1  L} = e^{\eta_2 L} = e^{\eta_3 L},
\end{equation}
which is equivalent to $\eta_3 - \eta_2, \eta_2 - \eta_1 \in \dsp  \frac{2 \pi i}{L} \mZ$. The definition of $\varphi$ in \eqref{def-varphi} and  the assumption on $\eta_j$  in \eqref{pro-eta} are motivated by the structure of $\M$ \cite{Cerpa07, CC09} and will be clear in \Cref{sect-NL-KdV}.

\medskip 

We have 


\begin{lemma}\label{lem-1} Let   $p \in \mR$ and let $\varphi$ be defined by \eqref{def-varphi}.  Set, for $(z, x) \in  \mC \times [0, L]$,
\begin{equation}\label{def-B}
B(z, x) = \frac{\sum_{j=1}^3(e^{\lambda_{j+1} L } - e^{\lambda_{j} L })e^{\lambda_{j+2} x} }{\sum_{j=1}^3 (\lambda_{j+1} - \lambda_{j}) e^{-\lambda_{j+2} L }} \cdot  \frac{\sum_{j=1}^3(e^{\tlambda_{j+1} L } - e^{\tlambda_{j} L })e^{\tlambda_{j+2} x} }{\sum_{j=1}^3 (\tlambda_{j+1} - \tlambda_{j}) e^{-\tlambda_{j+2} L }}  \cdot \varphi_x(x),
\end{equation}
where $\tlambda_j = \tlambda_j (z)$ ($j=1, \, 2, \, 3$) denotes the conjugate of the roots of \eqref{eq-lambda-j}  with $z$ replaced by $z - p$ and with the use of convention $\tlambda_{j+3} = \tlambda_j$ for $j \ge 1$.
Let $u \in L^2(0, + \infty)$ and let $y \in C \big([0, + \infty); L^2(0, L) \big) \cap L^2_{\loc}\big( [0, + \infty); H^1(0, L) \big)$ be the unique solution of \eqref{sys-y} and \eqref{IC-y}.  Then
\begin{equation}\label{identity-1}
\int_0^L \int_{0}^{+ \infty} |y(t, x)|^2 \varphi_x(x) e^{- i p t} \diff t \diff x  =
\int_{\mR}  \hu(z) \overline{\hu(z - p )} \int_0^L B(z, x) \diff x \diff z .
\end{equation}
\end{lemma}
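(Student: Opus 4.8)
The plan is to prove the identity \eqref{identity-1} by applying Plancherel's theorem in the time variable, having first expressed the time Fourier transform of $y$ explicitly via \Cref{lem-form-sol}. The left-hand side is, for each fixed $x$, the value at frequency $p$ of the Fourier transform of the function $t \mapsto |y(t,x)|^2$. The central observation is that the Fourier transform of a product $\overline{y(t,x)} \, y(t,x)$ (written as $|y|^2 = y \overline y$) turns into a convolution, which after evaluation at $p$ and a change of variables becomes an integral over $z$ of $\hu(z)\overline{\hu(z-p)}$ against the product of the two spatial profiles. The appearance of the conjugated roots $\tlambda_j(z)$, defined as the conjugates of the roots of \eqref{eq-lambda-j} with $z$ replaced by $z-p$, is exactly what is needed so that the second factor in \eqref{def-B} is the conjugate of $\hat y(z-p, x)/\hu(z-p)$.

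First I would justify exchanging the order of the $t$-integral and $x$-integral on the left-hand side; since $y \in C([0,+\infty); L^2(0,L)) \cap L^2_{\loc}([0,+\infty); H^1(0,L))$ and $\varphi_x$ is bounded on the compact interval $[0,L]$, and since $y$ vanishes for $t \geq T$ (so the $t$-integral is effectively over the compact set $[0,T]$), Fubini applies once one checks integrability of $|y(t,x)|^2 |\varphi_x(x)|$ on $[0,T]\times[0,L]$, which follows from $y \in L^2([0,T]; H^1(0,L)) \hookrightarrow L^2([0,T]\times(0,L))$. Next, for fixed $x$, I would write
\[
\int_0^{+\infty} |y(t,x)|^2 e^{-ipt} \diff t = \int_{\mR} y(t,x) \overline{y(t,x)} e^{-ipt} \diff t,
\]
using that $y$ is extended by $0$ on $\mR_-$, and interpret this via Plancherel/the convolution theorem: the Fourier transform of $\overline{y(\cdot,x)}$ at frequency $z$ is $\overline{\hat y(-z,x)}$, but it is cleaner to use the Parseval-type identity
\[
\int_{\mR} y(t,x)\, \overline{w(t,x)}\, \diff t = \int_{\mR} \hat y(z,x)\, \overline{\hat w(z,x)}\, \diff z
\]
applied with $w(t,x) = y(t,x) e^{ipt}$, whose Fourier transform in $t$ is $\hat y(z-p, x)$. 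This gives directly
\[
\int_{\mR} |y(t,x)|^2 e^{-ipt}\diff t = \int_{\mR} \hat y(z,x)\, \overline{\hat y(z-p,x)}\, \diff z.
\]

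Then I would substitute the explicit formula \eqref{def-y} for $\hat y(z,x) = \hu(z) \cdot (\text{profile in } \lambda_j(z))$ and the analogous formula for $\hat y(z-p,x)$, whose conjugate produces precisely the second factor of $B(z,x)$ with the $\tlambda_j(z)$ as defined in the statement. Integrating the resulting expression in $x$ over $[0,L]$ against $\varphi_x(x)$ and interchanging the $x$- and $z$-integrals (again by Fubini, now justified by the Paley-Wiener decay of $\hu G/H$ from \Cref{pro-Gen} together with boundedness of the profiles in $x$) yields the right-hand side of \eqref{identity-1}. The main obstacle I anticipate is the rigorous justification of the two Fubini interchanges and of the Parseval identity on the set $z \in \mR$ outside the discrete exceptional set where \Cref{lem-form-sol} holds: one must confirm that the explicit formula for $\hat y$ holds for a.e. real $z$ and that the integrand $\hu(z)\overline{\hu(z-p)} \int_0^L B(z,x)\diff x$ is genuinely integrable over $\mR$. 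This integrability should follow from $\hu \in L^2$ (Plancherel), combined with uniform bounds on $\int_0^L B(z,x)\diff x$ for $z$ real away from the discrete set, but verifying these bounds — in particular controlling the denominators $\det Q(z)$ near their real zeros, i.e. near $-iz \in \Sp(\mathcal A)$ — is where the technical care is concentrated.
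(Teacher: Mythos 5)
Your proposal follows essentially the same route as the paper's proof: write the left-hand side as the time-Fourier transform of $|y|^2$ evaluated at $p$, convert it (via Parseval, equivalently the convolution theorem used in the paper) into $\int_{\mR}\hat y(z,x)\,\overline{\hat y(z-p,x)}\,\diff z$, substitute the explicit formula of \Cref{lem-form-sol} so that the conjugated factor produces the $\tlambda_j$-profile, and exchange the $x$- and $z$-integrals by Fubini. The only substantive difference is in the technical bookkeeping, where your anticipated difficulty of controlling $\det Q(z)$ near its real zeros is actually unnecessary: the integrand on the right-hand side coincides for a.e.\ real $z$ with $\int_0^L \hat y(z,x)\overline{\hat y(z-p,x)}\varphi_x(x)\,\diff x$, whose integrability in $z$ follows from Cauchy--Schwarz and Plancherel alone, so no pointwise bound on $B$ near the exceptional set is needed.
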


\begin{remark} \rm The LHS of \eqref{identity-1} is a multiple of the  $L^2(0, L)$-projection of the solution $y(T, \cdot)$ into the space spanned by the conjugate of the vector $\varphi(x) e^{-i p T}$ whose  real and imaginary parts are in $\M$ for appropriate choices of $\eta_j$ and $p$ when the initial data is orthogonal to $\M$ (see~\cite{CC04, Cerpa07, CC09},  and also \eqref{thm-NL-B}).  
\end{remark}

\begin{proof}  We have
\begin{align*}
\int_0^L \int_{0}^{\infty} |y(t, x)|^2 \varphi_x(x) e^{ - i p t} \diff t \diff x = &
\sqrt{2 \pi} \int_0^L  \varphi_x(x)  \widehat{|y|^2}(p, x) \diff  x=   \int_0^L
\varphi_x(x)  \hat y * \widehat{ \bar{y}} (p, x) \diff x \\[6pt]
= &  \int_0^L \varphi_x(x) \int_{\mR}\hat y (z, x)  \widehat{ \bar{y}} (p - z, x) \diff z
\diff x \\[6pt]
= &   \int_0^L  \varphi_x(x)\int_{\mR}\hat y (z, x)  \overline{\hat y} (z -   p, x) \diff
z  \diff x.
\end{align*}
Using Fubini's theorem, we derive from \eqref{def-y} that
\begin{equation*}
\int_0^L \int_{0}^{\infty} |y(t, x)|^2 \varphi_x(x) e^{- i p t} \diff t \diff x  =
\int_{\mR}  \hu(z) \overline{\hu(z - p )} \int_0^L B(z, x) \diff x \diff z ,
\end{equation*}
which is \eqref{identity-1}.
\end{proof}

We next state  the behaviors of $\lambda_j$ and $ \tlambda_j$ given in \Cref{lem-1} for
``large positive" $z$, which will be used repeatedly in this section and \Cref{sect-L-KdV}.
These asymptotics are direct consequence of the equation~\eqref{eq-lambda} satisfied by
the $\lambda_j$.

\begin{lemma}\label{th:lambda_asym}
For  $p \in \mR$ and $z$ in a small enough conic neighborhood of $\R_+$, let $\lambda_j$ and $\tlambda_j$ with $j=1, \, 2, \, 3$
be given in  \Cref{lem-1}.
Consider the convention $\Re(\lambda_1) < \Re(\lambda_2) < \Re(\lambda_3)$ and similarly
for $\tlambda_j$. We have,  in the
limit $|z|\to \infty$,
\begin{equation}
\lambda_j = \mu_jz^{1/3} - \frac1{3\mu_j}z^{-1/3} + O(z^{-2/3}) \quad \text{ with } \mu_j
= e^{-i\pi/6-2ji\pi/3},
\end{equation}
\begin{equation}
\tlambda_j = \tmu_j z^{1/3} - \frac1{3\tmu_j} z^{-1/3} +O(z^{-2/3})  \quad  \text{ with }
\tmu_j = e^{i\pi/6+2ij\pi/3}
\end{equation}
(see Figure~\ref{fig:mu} for the geometry of $\mu_j$ and $\tmu_j$). Here $z^{1/3}$ denotes the cube root of $z$ with the real part positive.  
\end{lemma}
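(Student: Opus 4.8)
The plan is to treat \eqref{eq-lambda} as a regular perturbation problem after rescaling. Writing $\lambda = z^{1/3} w$, with $z^{1/3}$ the branch of positive real part (single-valued on a small conic neighborhood of $\R_+$), the cubic $\lambda^3+\lambda+iz=0$ becomes
\begin{equation*}
w^3 + \eps\, w + i = 0, \qquad \eps := z^{-2/3}.
\end{equation*}
As $|z|\to\infty$ inside the sector one has $\eps\to 0$, and the limiting equation $w^3=-i$ has the three \emph{simple} roots $\mu_j = e^{-i\pi/6 - 2ji\pi/3}$ (indeed $\mu_j^3=e^{-i\pi/2}=-i$), which fixes the leading behaviour $\lambda_j\sim\mu_j z^{1/3}$.

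Since the roots of $w^3=-i$ are distinct, the discriminant is nonzero and the holomorphic implicit function theorem (with $\eps$ as parameter) yields, for $|\eps|$ small, three analytic branches $w_j(\eps)=\mu_j+\beta_j\eps+O(\eps^2)$. To get $\beta_j$ I would substitute into $w^3+\eps w+i=0$ and collect powers of $\eps$: the constant term cancels because $\mu_j^3=-i$, and the order-$\eps$ term gives $3\mu_j^2\beta_j+\mu_j=0$, hence $\beta_j=-1/(3\mu_j)$. Undoing the rescaling,
\begin{equation*}
\lambda_j = z^{1/3}\, w_j(z^{-2/3}) = \mu_j z^{1/3} - \frac{1}{3\mu_j}\, z^{-1/3} + O(z^{-1}),
\end{equation*}
and since $O(z^{-1})\subset O(z^{-2/3})$ this is the claimed expansion for $\lambda_j$.

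Two points deserve care beyond this formal computation. First, \emph{uniformity} over the unbounded sector: the remainder bound from the implicit function theorem is uniform as long as $\eps$ stays in a fixed small disc, which is guaranteed by taking $|z|$ large and the opening of the conic neighborhood small; this is what makes the $O(\cdot)$ estimates uniform in $z$. Second, the \emph{ordering convention} $\Re(\lambda_1)<\Re(\lambda_2)<\Re(\lambda_3)$: for $z$ real and positive $z^{1/3}>0$ and $\lambda_j\approx \mu_j z^{1/3}$, and since $\Re(\mu_1)=-\sqrt3/2<\Re(\mu_2)=0<\Re(\mu_3)=\sqrt3/2$, the labels $\mu_1,\mu_2,\mu_3$ are precisely those compatible with the stated ordering; for a sufficiently narrow sector the leading term dominates and the ordering persists.

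Finally, the expansion for $\tlambda_j$ follows from that of $\lambda_j$ by complex conjugation. By definition $\tlambda_j(z)$ is the conjugate of a root of \eqref{eq-lambda-j} at $z-p$; applying the expansion above at $z-p$ and conjugating, and using $\overline{\mu_j}=\tmu_j$ together with $\overline{-1/(3\mu_j)}=-1/(3\tmu_j)$, gives the result, the shift being harmless because $(z-p)^{1/3}=z^{1/3}+O(z^{-2/3})$ and $(z-p)^{-1/3}=z^{-1/3}+O(z^{-4/3})$ are absorbed into the $O(z^{-2/3})$ remainder. I expect the main (and only mild) obstacle to be the bookkeeping of uniform remainder bounds across the sector and the verification that the labelling respects the ordering; the perturbative computation itself is elementary, as already signalled by the remark preceding the statement.
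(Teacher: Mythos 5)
Your argument is correct where it matters, and it is precisely the fleshing-out of what the paper leaves unproved: the paper gives no proof of \Cref{th:lambda_asym} beyond the preceding remark that the asymptotics are a direct consequence of \eqref{eq-lambda}. The rescaling $\lambda=z^{1/3}w$, the reduction to $w^3+\eps w+i=0$ with $\eps=z^{-2/3}$, the holomorphic implicit function theorem at the simple roots of $w^3=-i$, the coefficient computation $\beta_j=-1/(3\mu_j)$, the uniformity of the remainder on the cone, and the compatibility of the labelling with $\Re(\lambda_1)<\Re(\lambda_2)<\Re(\lambda_3)$ are all handled correctly (you in fact obtain the sharper remainder $O(z^{-1})$ for $\lambda_j$).

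The one step that does not hold as written is the derivation of the $\tlambda_j$ expansion by conjugation, because conjugation is antiholomorphic while the statement is claimed on a conic neighborhood of $\R_+$, not only on $\R_+$. Conjugating the expansion of $\lambda_j$ at $z-p$ produces $\tmu_j\,\overline{(z-p)^{1/3}}=\tmu_j\,(\bar z-p)^{1/3}$, and in a cone one has $\lvert \bar z^{1/3}-z^{1/3}\rvert=2|z|^{1/3}\lvert\sin(\arg z/3)\rvert$, which is of order $|z|^{1/3}\lvert\arg z\rvert$ and not $O(|z|^{-2/3})$; your estimate $(z-p)^{1/3}=z^{1/3}+O(z^{-2/3})$ absorbs the shift by $p$ but not the conjugation of $z$ itself. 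Thus your argument proves the $\tlambda_j$ expansion only for real $z$ (or $z$ with bounded imaginary part), which happens to be the only place the paper uses it (e.g.\ in \Cref{lem-B}), but it does not deliver the lemma as stated. The clean fix is to note that for non-real $z$ the $\tlambda_j$ must be read as the roots of $x^3+x-i(z-p)=0$, i.e.\ the holomorphic continuation from the real axis of the conjugate-root definition in \Cref{lem-1}, and then to rerun your own rescaling argument on that equation: after $x=z^{1/3}w$ it becomes $w^3+\eps w-i+ip/z=0$, the limiting equation $w^3=i$ has the simple roots $\tmu_j$, and the extra term $ip/z$ only enters the expansion at order $z^{-2/3}$, giving $\tlambda_j=\tmu_j z^{1/3}-\frac{1}{3\tmu_j}z^{-1/3}+O(z^{-2/3})$ uniformly in the cone.
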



\begin{figure}[htp]
 \begin{minipage}[c]{0.5\textwidth}
  {\centering
  \begin{tikzpicture}[scale=0.8]
  \draw[->] (-4,0) -- (4,0);
  \draw[->] (0,-4) -- (0,4);
  
  \draw (0,0) circle[radius=3];
  
  \foreach \j in {1,2,3} {
   \fill (-30-120*\j:3) circle[fill, radius=0.15] 
     ++(0.15,0) node[right, fill=white, fill opacity=0.5, text opacity = 1]{$\mu_\j$};
   \fill (30+120*\j:3) circle[fill, radius=0.15] 
     ++(0.15,0) node[right, fill=white, fill opacity=0.5, text opacity = 1]{$\tmu_\j$}; 
         }
\end{tikzpicture}}
 \end{minipage}\hfill%
 \begin{minipage}[c]{0.5\textwidth}
 \caption{The roots $\lambda_j$ of $\lambda^3 + \lambda + i z = 0$ satisfy, when
$z>0$ is large, $\lambda_j \sim \mu_j z^{1/3}$ where $\mu_j^3 = -i$. When
$z<0$ and $|z|$ is large, then the corresponding roots $\hat \lambda_j$ satisfy
$\hat \lambda_j \sim \tmu_j |z|^{1/3}$ with $\tmu_j = \overline{\mu_j}$. We also have $\tlambda_j \sim \hat \lambda_j$.}
\label{fig:mu}
 \end{minipage}
\end{figure}
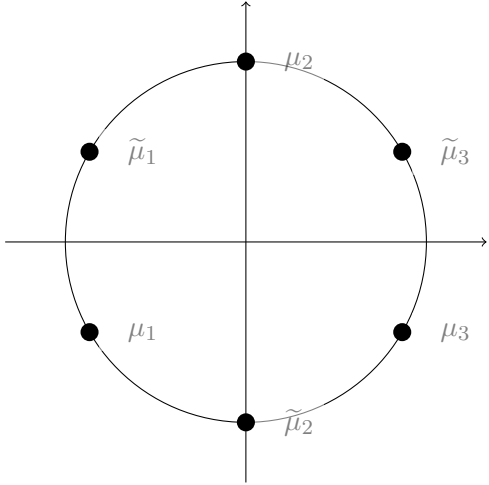

We are ready to establish the behavior of
\[
\int_0^L B(z, x) \diff x
\]
for $z \in \mR$ with  large $|z|$, which is one of the main ingredients for the analysis in this section.

\begin{lemma} \label{lem-B}  Let   $p \in \mR$,  and let $\varphi$ be defined by \eqref{def-varphi}.  Assume that \eqref{pro-eta} holds and  $\eta_j \neq 0$ for $j=1, \,  2, \,  3$.   Let $B$ be defined by \eqref{def-B}.  We have
\begin{equation}
 \int_0^L B(z, x) \diff x  =
\frac{E}{|z|^{4/3}}    + O(|z|^{- 5/3}) \mbox{ for $z\in \mR$ with large $|z|$},
\end{equation}
where $E$ is  defined by
\begin{equation}\label{def-D}
E = \frac{1}{3} (e^{\eta_1 L} -1) \left(- \frac{2}{3}  \sum_{j=1}^3 \eta_{j+2}^2(\eta_{j+1} -\eta_{j})  - i p      \sum_{j=1}^3 \frac{\eta_{j+1}-\eta_{j}} { \eta_{j+2}} \right).
\end{equation}
\end{lemma}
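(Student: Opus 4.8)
The plan is to treat $\int_0^L B(z,x)\,dx$ as $\int_0^L F_1(z,x)\,\tilde F_2(z,x)\,\varphi_x(x)\,dx$, where $F_1=\hy(z,\cdot)/\hu(z)$ is the normalized linear transfer function produced by \Cref{lem-form-sol} and $\tilde F_2$ is its $\tlambda$-analogue (the second factor in \eqref{def-B}), and to obtain the asymptotics of each factor from \Cref{th:lambda_asym}. First I would record the sign structure of the exponents: for $z$ large and positive the roots satisfy $\Re\lambda_1\to-\infty$, $\Re\lambda_2\to0$, $\Re\lambda_3\to+\infty$, and similarly for $\tlambda_j$. Using $\lambda_1+\lambda_2+\lambda_3=0$ (no quadratic term in \eqref{eq-lambda}) to rewrite $e^{(\lambda_i+\lambda_j)L}=e^{-\lambda_kL}$ and discarding exponentially small contributions, the denominator of $F_1$ collapses to $(\lambda_3-\lambda_2)e^{-\lambda_1L}(1+o(1))$, so that
\[
F_1 = \frac{1}{\lambda_3-\lambda_2}\Big(e^{\lambda_3(x-L)}+e^{-\lambda_2L}\big(e^{\lambda_1x}-e^{\lambda_2x}\big)\Big)+(\text{exp. small}),
\]
a superposition of a boundary layer at $x=L$, a boundary layer at $x=0$, and a bulk oscillatory mode $-e^{\lambda_2(x-L)}/(\lambda_3-\lambda_2)$; the same holds for $\tilde F_2$ with $\lambda$ replaced by $\tlambda$.

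The decisive structural input is that, under \eqref{pro-eta}, both $\varphi$ and $\varphi_x$ vanish at the endpoints: $\varphi(0)=\varphi(L)=0$ because $\sum_j(\eta_{j+1}-\eta_j)=0$, and $\varphi_x(0)=\varphi_x(L)=0$ because $\sum_j(\eta_{j+1}-\eta_j)\eta_{j+2}=0$. I would also record $\varphi_{xx}(0)=\sum_j\eta_{j+2}^2(\eta_{j+1}-\eta_j)$, $\varphi_{xx}(L)=e^{\eta_1L}\varphi_{xx}(0)$, and $\int_0^L\varphi=(e^{\eta_1L}-1)\sum_j(\eta_{j+1}-\eta_j)/\eta_{j+2}$, which are exactly the quantities appearing in \eqref{def-D}. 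The vanishing of $\varphi_x$ at the endpoints is what suppresses the naive leading order and pins down the exponent $|z|^{-4/3}$: it forces each boundary-layer integral to pick up the next Taylor coefficient $\varphi_{xx}$, and it forces the bulk integral to produce a gradient rather than a value.

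Next I would expand $F_1\tilde F_2$ into its nine mode-products $F_1^a\tilde F_2^b$, $a,b\in\{L,0,\mathrm{bulk}\}$, and integrate each against $\varphi_x$. The prefactor $1/\big((\lambda_3-\lambda_2)(\tlambda_3-\tlambda_2)\big)$ is $\tfrac13|z|^{-2/3}(1+o(1))$, since $(\mu_3-\mu_2)(\tmu_3-\tmu_2)=|\mu_3-\mu_2|^2=3$, which accounts for the factor $\tfrac13$ in \eqref{def-D}. The products $(L,0)$ and $(0,L)$ carry exponentially small coefficients $e^{-\tlambda_3L}$, $e^{-\lambda_3L}$ and drop out. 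The six products localizing at a single endpoint are evaluated by a Watson/Laplace boundary-layer argument: since $\varphi_x$ vanishes there, $\int_0^L e^{(\lambda_a+\tlambda_b)(x-L)}\varphi_x\,dx\sim\pm\varphi_{xx}(L)/(\lambda_a+\tlambda_b)^2$ near $x=L$, with the analogue near $x=0$, and summing the three endpoint-$L$ and three endpoint-$0$ contributions yields a multiple of $\varphi_{xx}(0)-\varphi_{xx}(L)=-(e^{\eta_1L}-1)\varphi_{xx}(0)$, the first term of $E$. The remaining genuine bulk product $(\mathrm{bulk},\mathrm{bulk})$ is the only source of $p$-dependence: here the exponent $\lambda_2+\tlambda_2$ nearly cancels, and the \emph{refined} $p$-dependent expansion $\tlambda_j=\tmu_j(z-p)^{1/3}-\tfrac1{3\tmu_j}(z-p)^{-1/3}+\cdots$ gives $\lambda_2+\tlambda_2=O(p\,|z|^{-2/3})$; integrating by parts and using $\varphi(0)=\varphi(L)=0$ turns $\int_0^L e^{(\lambda_2+\tlambda_2)(x-L)}\varphi_x\,dx$ into $-(\lambda_2+\tlambda_2)\int_0^L\varphi\,dx+O(|z|^{-4/3})$, which supplies the second, $p$-proportional term of $E$. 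Collecting the $O(|z|^{-4/3})$ contributions gives $E/|z|^{4/3}$, with everything else $O(|z|^{-5/3})$.

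The case $z\to-\infty$ is handled by the symmetry recorded in \Cref{fig:mu}: for $z<0$ the roles of $\mu_j$ and $\tmu_j$ are interchanged, so the computation is identical after this relabeling and produces the same coefficient $E$ against $|z|^{-4/3}$. The main obstacle is the bookkeeping of the expansion to the correct subleading order: one must verify that the many exponentially small and $O(|z|^{-5/3})$ algebraic terms genuinely do not contribute, evaluate the six boundary-layer integrals one order beyond their vanishing leading term, and—most delicately—extract the $p$-dependence from the bulk mode, where the leading behavior of $\lambda_2+\tlambda_2$ cancels and only the $p$-correction in the root expansion survives. This last point is what makes the refined form of \Cref{th:lambda_asym} (tracking the $(z-p)^{1/3}$ dependence, not merely the $z^{1/3}$ leading term) indispensable.
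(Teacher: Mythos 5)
Your outline is, up to packaging, the paper's own proof. The paper normalizes by the same dominant denominator term, keeps exactly your three modes in each factor (its $f_m$, $g_m$), groups the nine products exactly as you do---the three endpoint-$L$ products give $T_1$ in \eqref{def-T1}, the three endpoint-$0$ products give $T_2$ in \eqref{def-T2}, the bulk$\times$bulk product gives $T_3$ in \eqref{def-T3}, and the $(L,0)$, $(0,L)$ products are discarded in \eqref{B-p3-1}---and it treats $z\to-\infty$ by the same conjugation symmetry. Your integration by parts at the endpoints (using $\varphi_x(0)=\varphi_x(L)=0$ to promote the leading term to $\varphi_{xx}/\Lambda^2$) is equivalent to the paper's device of evaluating the integrals exactly and invoking the identity \eqref{annulation}, and your integration by parts in the bulk$\times$bulk term (using $\varphi(0)=\varphi(L)=0$) is the paper's expansion \eqref{T3-p2}; these are presentational differences only.

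The gap is that \Cref{lem-B} asserts an exact constant, and you stop at qualitative statements precisely where that constant is produced. For the endpoint families this is harmless: your ``multiple of $\varphi_{xx}(0)-\varphi_{xx}(L)$'' is correct, but you still owe the finite computation $(\mu_3+\tmu_3)^{-2}-(\mu_3+\tmu_2)^{-2}-(\mu_2+\tmu_3)^{-2}=\tfrac23$ (the paper's \eqref{eq:T1_asym}), which pins the factor $\tfrac29$. The serious issue is the bulk$\times$bulk term: you record only $\lambda_2+\tlambda_2=O(p|z|^{-2/3})$ and then assert that it ``supplies the second, $p$-proportional term of $E$''. Carried out, your route does \emph{not} reproduce that term as written in \eqref{def-D}: it produces one third of it. For $z$ real and large this can be checked without any expansion: the middle roots are purely imaginary, $\lambda_2=i\alpha$ and $\tlambda_2=-i\beta$ with $\alpha^3-\alpha=z$ and $\beta^3-\beta=z-p$, so $(\alpha-\beta)\left(\alpha^2+\alpha\beta+\beta^2-1\right)=p$ and hence
\begin{equation*}
\lambda_2+\tlambda_2=i(\alpha-\beta)=\frac{ip}{3}\,z^{-2/3}\bigl(1+O(z^{-2/3})\bigr),
\end{equation*}
in agreement with your refined expansion $i\bigl[z^{1/3}-(z-p)^{1/3}\bigr]\sim\tfrac{ip}{3}z^{-2/3}$, but in conflict with the paper's intermediate claim $\lambda_2+\tlambda_2=ipz^{-2/3}+O(z^{-1})$. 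The latter rests on the relation $\lambda_2+\tlambda_2=-ip/(\lambda_2^2+\tlambda_2^2+\lambda_2\tlambda_2)$, whose denominator carries the wrong sign on the cross term: the factorization $a^3+b^3=(a+b)(a^2-ab+b^2)$ gives $\lambda_2^2-\lambda_2\tlambda_2+\tlambda_2^2+1\approx-3z^{2/3}$, not $-z^{2/3}$. So completing your argument yields \eqref{def-D} with $-ip$ replaced by $-ip/3$, and you must confront this discrepancy explicitly rather than assert agreement: either the constant in the statement (and in the paper's $T_3$ computation) is to be corrected accordingly, or an error must be exhibited in the identity above. Note that the property used downstream, namely $E\neq 0$ under \eqref{cond-kl} (\Cref{lem-E}), is insensitive to this factor: redoing the computation of \Cref{lem-E} with the corrected constant still gives $E$ as a nonzero multiple of $(e^{\eta_1 L}-1)\,i\,kl(k+l)$, so the non-controllability results are unaffected, but the lemma you are proving, as stated, is about the precise value of $E$.
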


\begin{proof} We first deal with the case where $z$ is positive and large. We use the convention in \Cref{th:lambda_asym} for $\lambda_j$ and $\tlambda_j$.
Consider the denumerator of $B(z, x)$. We have, by \Cref{th:lambda_asym},
\begin{multline}\label{den-B}
\frac{1}{\sum_{j=1}^3 (\lambda_{j+1} - \lambda_{j}) e^{-\lambda_{j+2} L }} \cdot
\frac{1}{ \sum_{j=1}^3 (\tlambda_{j+1} - \tlambda_{j}) e^{-\tlambda_{j+2} L }} \\[6pt]
= \frac{e^{\lambda_1 L } e^{\tlambda_1 L}}{(\lambda_3 - \lambda_2)  (\tlambda_3 -
\tlambda_2)  } \Big( 1 + O \big(e^{-C |z|^{1/3}} \big) \Big).
\end{multline}

We next deal with the numerator of $B(z, x)$. Set, for $(z, x) \in \mR \times (0, L)$,
\begin{equation}\label{def-fg}
f(z, x) = \sum_{j=1}^3(e^{\lambda_{j+1} L } - e^{\lambda_{j} L })e^{\lambda_{j+2} x},
\quad g (z, x) = \sum_{j=1}^3(e^{\tlambda_{j+1} L } - e^{\tlambda_{j} L
})e^{\tlambda_{j+2} x},
\end{equation}
\footnote{The index $m$ stands the
main part.}
\[
f_m(z, x) =  - e^{\lambda_3 L} e^{\lambda_2 x} + e^{\lambda_2 L}  e^{\lambda_3 x} +
e^{\lambda_3 L } e^{\lambda_1 x}, \quad
g_m (z, x) =  - e^{\tlambda_3 L} e^{\tlambda_2 x} + e^{\tlambda_2 L}  e^{\tlambda_3 x}
+ e^{\tlambda_3 L } e^{\tlambda_1 x}.
\]
We have
\begin{multline*}
 \int_0^L f(z, x) g(z, x) \varphi_x(x) \diff x
 = \int_0^L f_m(z, x) g_m(z, x) \varphi_x(x) \diff x + \int_0^L (f- f_m)(z, x) g_m(z, x)
\varphi_x(x) \diff x \\[6pt]  + \int_0^L f_m(z, x) (g -g_m) (z, x) \varphi_x(x) \diff x +
\int_0^L (f- f_m)(z, x) (g -g_m) (z, x) \varphi_x(x) \diff x.
\end{multline*}
It is clear from \Cref{th:lambda_asym} that
\begin{multline}\label{B-p0}
 \int_0^L |(f- f_m)(z, x) g_m(z, x) \varphi_x(x)| \diff x + \int_0^L |(f- f_m)(z, x) (g
-g_m) (z, x) \varphi_x(x)| \diff x  \\[6pt]  + \int_0^L |f_m(z, x) (g -g_m) (z, x)
\varphi_x(x)| \diff x \le C |e^{(\lambda_3 + \tlambda_3) L }| e^{- C |z|^{1/3}}.
\end{multline}

We next estimate
\begin{equation}\label{eq:int_fm_gm}
\int_0^L f_m(x, z) g_m(x, z) \varphi_x(x) = \int_0^L f_m(x, z) g_m(x, z) \left(
\sum_{j=1}^3 \eta_{j+2} (\eta_{j+1} - \eta_{j}) e^{\eta_{j+2} x} \right) \diff x.
\end{equation}
We first have, by  \eqref{pro-eta} and \Cref{th:lambda_asym},
\begin{multline}\label{B-p1}
\int_0^L \Big(  - e^{\lambda_3 L} e^{\lambda_2 x}  e^{\tlambda_2 L}  e^{\tlambda_3 x}
- e^{\lambda_2 L}  e^{\lambda_3 x}e^{\tlambda_3 L} e^{\tlambda_2 x}
+ e^{\lambda_2 L}  e^{\lambda_3 x}e^{\tlambda_2 L}  e^{\tlambda_3 x} \Big) \\[6pt]
\times \left( \sum_{j=1}^3 \eta_{j+2} (\eta_{j+1} - \eta_{j}) e^{\eta_{j+2} x} \right)
\diff x
=  e^{ (\lambda_3 +   \tlambda_3 + \lambda_2 +  \tlambda_2) L } \Big(e^{\eta_1 L
}T_1(z) + O\big(e^{-C|z|^{1/3}} \big) \Big),
\end{multline}
where
\begin{equation}\label{def-T1}
T_1(z) : =  \sum_{j=1}^3 \eta_{j+2}(\eta_{j+1}-\eta_{j}) \left( \frac{1}{  \lambda_3
+   \tlambda_3   + \eta_{j+2}} - \frac{1}{ \lambda_3  +   \tlambda_2   + \eta_{j+2}} -
\frac{1}{ \lambda_2  + \tlambda_3  + \eta_{j+2}}  \right).
\end{equation}

Let us now deal with the terms of~\eqref{eq:int_fm_gm} that contain both $e^{\lambda_3 L + \tlambda_3 L}$ and (either $e^{\lambda_1 x}$ or $e^{\tlambda_1 x}$).  We obtain, by  \eqref{pro-eta} and \Cref{th:lambda_asym},
\begin{multline}\label{B-p2}
\int_0^L \Big( e^{\lambda_3 L } e^{\lambda_1 x}e^{\tlambda_3 L } e^{\tlambda_1 x}
- e^{\lambda_3 L } e^{\lambda_1 x} e^{\tlambda_3 L} e^{\tlambda_2 x} - e^{\lambda_3 L}
e^{\lambda_2 x} e^{\tlambda_3 L } e^{\tlambda_1 x} \Big) \\[6pt]
\times \left( \sum_{j=1}^3 \eta_{j+2} (\eta_{j+1} - \eta_{j}) e^{\eta_{j+2} x} \right)
\diff x
=  e^{(\lambda_3 + \tlambda_3) L }\Big( T_2(z) + O(e^{-C|z|^{1/3}}) \Big),
\end{multline}
where
\begin{equation}\label{def-T2}
T_2 (z): = \sum_{j=1}^3 \eta_{j+2}(\eta_{j+1}-\eta_{j}) \left( - \frac{1}{\lambda_1
+  \tlambda_1  + \eta_{j+2}} + \frac{1}{\lambda_1   +  \tlambda_2   + \eta_{j+2}} +
\frac{1}{ \lambda_2  +   \tlambda_1   + \eta_{j+2}}\right).
\end{equation}
We  have, by \eqref{pro-eta},
\begin{equation}\label{B-p3}
\int_0^L e^{\lambda_3 L} e^{\lambda_2 x} e^{\tlambda_3 L} e^{\tlambda_2 x}  \left(
\sum_{j=1}^3 \eta_{j+2} (\eta_{j+1} - \eta_{j}) e^{\eta_{j+2} x} \right) \diff x
=  e^{(\lambda_3 + \tlambda_3) L } T_3(z),
\end{equation}
where
\begin{equation}\label{def-T3}
T_3 (z): = \Big( e^{\lambda_2 L  + \tlambda_2 L  + \eta_{1} L } - 1\Big) \sum_{j=1}^3
\frac{\eta_{j+2}(\eta_{j+1}-\eta_{j})  }{\lambda_2   + \tlambda_2   + \eta_{j+2}}.
\end{equation}
The other terms of~\eqref{eq:int_fm_gm} are negligible, because we have
\begin{multline}\label{B-p3-1}
\left| \int_0^L \Big(  e^{\lambda_3 L } e^{\lambda_1 x} e^{\tlambda_2 L} e^{\tlambda_3
x} + e^{\lambda_2 L} e^{\lambda_3 x} e^{\tlambda_3 L} e^{\tlambda_1 x}  \Big) \Big(
\sum_{j=1}^3 \eta_{j+2} (\eta_{j+1} - \eta_{j}) e^{\eta_{j+2} x} \Big) \diff x \right|
\\[6pt]
= |e^{(\lambda_3 + \tlambda_3)L} | O(e^{-C z^{1/3}}).
\end{multline}

Using \Cref{th:lambda_asym}, we have
\begin{equation}\label{B-p3-2}
\left\{\begin{array}{c}
\lambda_1 + \tlambda_1 + \lambda_2 + \tlambda_2 + \lambda_3 + \tlambda_3 =
O(z^{-1/3}),  \\[6pt]
\lambda_1 + \tlambda_1 + \lambda_3 + \tlambda_3 = O(z^{-1/3}), \\[6pt]
(\lambda_3 - \lambda_2)(\tlambda_3 - \tlambda_2) = 3 z^{2/3} ( 1 + O(z^{-1/3}) ).
\end{array}\right.
\end{equation}

We claim that
\begin{equation}\label{claim-T}
|T_1(z)| + |T_2(z)| + |T_3(z)| = O(z^{-2/3}) \mbox{ for large positive $z$}.
\end{equation}

Assuming \eqref{claim-T}, and combining \eqref{den-B}, \eqref{B-p1}, \eqref{B-p2},
\eqref{B-p3}, \eqref{B-p3-1}, and \eqref{B-p3-2} yields
\begin{equation}\label{B-p4}
\int_{0}^L B(z, x) \diff z  \\[6pt]= \frac{1}{3 |z|^{2/3}} \Big( e^{\eta_1 L }T_1(z) +
T_2 (z) + T_3 (z) + O(z^{-1}) \Big).
\end{equation}

We next derive the asymptotic behaviors of $T_1(z)$, $T_2(z)$, and $T_3(z)$, which in
particular imply \eqref{claim-T}. We first deal with $T_1(z)$ given in
\eqref{def-T1}. Since
\begin{equation}\label{annulation}
\sum_{j=1}^3 \eta_{j+2}(\eta_{j+1}-\eta_{j}) = 0,
\end{equation}
we obtain
\begin{align*}
T_1 (z)=  & \sum_{j=1}^3 \eta_{j+2}(\eta_{j+1}-\eta_{j}) \left( \frac{1}{\lambda_3 +
\tlambda_3 + \eta_{j+2}} -  \frac{1}{\lambda_3 + \tlambda_3 } \right) \\[6pt]
& +
 \sum_{j=1}^3 \eta_{j+2}(\eta_{j+1}-\eta_{j})  \left(  - \frac{1}{\lambda_3 +
\tlambda_2 + \eta_{j+2}}
 +  \frac{1}{\lambda_3 + \tlambda_2}  \right) \\[6pt]
&  + \sum_{j=1}^3 \eta_{j+2}(\eta_{j+1}-\eta_{j})  \left(- \frac{1}{\lambda_2 +
\tlambda_3 + \eta_{j+2}}  + \frac{1}{\lambda_2 + \tlambda_3}  \right).
\end{align*}
Using \Cref{th:lambda_asym},
 we get
\begin{equation*}
T_1(z) =  - \sum_{j=1}^3 \eta_{j+2}^2(\eta_{j+1} -\eta_{j}) \left( \frac{1}{(\lambda_3
+ \tlambda_3)^2} - \frac{1}{(\lambda_3 + \tlambda_2)^2} - \frac{1}{(\lambda_2 +
\tlambda_3)^2} \right) +  O(z^{-1}).
\end{equation*}
Moreover, we derive from \Cref{th:lambda_asym} that
\begin{align}
 \frac{1}{(\lambda_3 + \tlambda_3)^2} - \frac{1}{(\lambda_3 + \tlambda_2)^2} -
\frac{1}{(\lambda_2 + \tlambda_3)^2}
 &= z^{-2/3} \Big((\mu_3+\tmu_3)^{-2}-(\mu_3+\tmu_2)^{-2} -(\mu_2+\tmu_3)^{-2}
\Big) + O(z^{-1})\nonumber\\[6pt]
&= z^{-2/3} \left( \frac13 - \frac{-1+i\sqrt 3}6 - \frac{-1-i\sqrt
3}6\right) + O(z^{-1})\nonumber\\[6pt]
&= \frac23z^{-2/3} + O(z^{-1}). \label{eq:T1_asym}
\end{align}
We derive that
\begin{equation}\label{T1-final}
T_1(z)  = - \frac{2}{3} z^{-2/3}  \sum_{j=1}^3 \eta_{j+2}^2(\eta_{j+1} -\eta_{j}) +
O(z^{-1}).
\end{equation}

We next consider  $T_2(z)$ given in \eqref{def-T2}. We have, by \eqref{annulation},
\begin{align*}
T_2 (z)=  & \sum_{j=1}^3 \eta_{j+2}(\eta_{j+1}-\eta_{j}) \left( -  \frac{1}{\lambda_1
+ \tlambda_1 + \eta_{j+2}} +  \frac{1}{\lambda_1 + \tlambda_1 } \right) \\[6pt]
& +
 \sum_{j=1}^3 \eta_{j+2}(\eta_{j+1}-\eta_{j})  \left(   \frac{1}{\lambda_1 +
\tlambda_2 + \eta_{j+2}} -
   \frac{1}{\lambda_1 + \tlambda_2}  \right) \\[6pt]
&  + \sum_{j=1}^3 \eta_{j+2}(\eta_{j+1}-\eta_{j})  \left( \frac{1}{\lambda_2 +
\tlambda_1 + \eta_{j+2}}  - \frac{1}{\lambda_2 + \tlambda_1}  \right).
\end{align*}
Using \Cref{th:lambda_asym}, we obtain
\begin{equation*}
T_2 (z)=   \sum_{j=1}^3 \eta_{j+2}^2(\eta_{j+1} -\eta_{j}) \left( \frac{1}{(\lambda_1
+ \tlambda_1)^2} - \frac{1}{(\lambda_1 + \tlambda_2)^2} - \frac{1}{(\lambda_2 +
\tlambda_1)^2} \right) +  O(z^{-1}),
\end{equation*}
and
\begin{equation*}
 \frac{1}{(\lambda_1
+ \tlambda_1)^2} - \frac{1}{(\lambda_1 + \tlambda_2)^2} - \frac{1}{(\lambda_2 +
\tlambda_1)^2} = z^{-2/3} \Big((\mu_1+\tmu_1)^{-2}-(\mu_1+\tmu_2)^{-2} -(\mu_2+\tmu_1)^{-2}
\Big) + O(z^{-1}).
\end{equation*}
By \Cref{th:lambda_asym}, we have
\begin{align*}
(\mu_1 + \tmu_1)^2 &= (\mu_3 + \tmu_3)^2&
(\mu_1 + \tmu_2)^2 &= (\tmu_3 + \mu_2)^2&
(\tmu_1 + \mu_2)^2 &= (\mu_3 + \tmu_2)^2.
\end{align*}
Combining this with \eqref{eq:T1_asym}, we then have
\begin{equation}\label{T2-final}
T_2 (z) =  \frac{2}{3} z^{-2/3} \sum_{j=1}^3 \eta_{j+2}^2(\eta_{j+1} -\eta_{j})  +
O(z^{-1}).
\end{equation}

We finally  consider  $T_3(z)$ given in \eqref{def-T3}. We have, by \eqref{eq-lambda},
\[
\lambda_2^3 + \tlambda_2^3 + \lambda_2 + \tlambda_2 = - i z + i (z -p) =
- i p.
\]
This yields
\[
\lambda_2 + \tlambda_2 = - \frac{i p}{\lambda_2^2 + \tlambda_2^2 + \lambda_2
\tlambda_2}.
\]
From \Cref{th:lambda_asym}, we have
\[
\lambda_2 + \tlambda_2 = i p  z^{-2/3}  + O(z^{-1}).
\]
It follows that
\begin{align}\label{T3-p2}
\sum_{j=1}^3 \frac{\eta_{j+2}(\eta_{j+1}-\eta_{j})  }{\lambda_2 + \tlambda_2  +
\eta_{j+2}}  &=  \sum_{j=1}^3 \frac{\eta_{j+2}(\eta_{j+1}-\eta_{j})  }{i p
z^{-2/3} + \eta_{j+2}}  + O(|z|^{-1}) \nonumber \\
&=   \sum_{j=1}^3 (\eta_{j+1}-\eta_{j})  \left( 1 -  \frac{ip z^{-2/3}}{ \eta_{j+2} }
\right) + O(|z|^{-1})\nonumber\\
&=   - i p   \sum_{j=1}^3 \frac{  \eta_{j+1}-\eta_{j}} {
\eta_{j+2} } z^{-2/3} +  O(z^{-1}).
\end{align}
We derive from  \eqref{T3-p2} and \Cref{th:lambda_asym} that
\begin{equation}\label{T3-final}
T_3 =  - i p   \Big( e^{ \eta_1 L } - 1 \Big) \sum_{j=1}^3 \frac{\eta_{j+1}-\eta_{j}}
{  \eta_{j+2} } z^{-2/3}   + O (z^{-1}).
\end{equation}

Using \eqref{T1-final}, \eqref{T2-final}, and \eqref{T3-final}, we derive from
\eqref{B-p4} that
\begin{equation*}
\int_{0}^L B(z, x) \diff x  = E z^{-4/3}   + O(z^{-5/3}),
\end{equation*}
which is the conclusion for large positive $z$.

\medskip
The conclusion in the case where $z$ is large and negative can be derived from the
case where $z$ is positive and large as follows.
Define, for $(z, x) \in  \mR \times (0, L)$, with large $|z|$,
\[
M(z, x) =  \frac{\sum_{j=1}^3(e^{\lambda_{j+1} L } - e^{\lambda_{j} L
})e^{\lambda_{j+2} x} }{\sum_{j=1}^3 (\lambda_{j+1} - \lambda_{j}) e^{-\lambda_{j+2} L
}}.
\]
Then
\[
B(z, x) = M(z, x) \overline{M (z-p, x)} \varphi_x(x).
\]
It is clear from the definition of $M$ that
\[
M(-z, x) = \overline{M(z, x)}.
\]
We then have
\[
B(-z, x) =M(-z, x) \overline{M (- z-p, x)} \varphi_x(x) =  \overline{ M(z, x)
\overline{M (z+ p, x)}  \, \overline{\varphi_x(x)}} .
\]
We thus obtain the result in the case where $z$ is negative and large  by taking the
conjugate of the corresponding expression for  large positive $z$ in which $\eta_j$
and $p$ are replaced by $-\eta_j$ and $-p$. The conclusion follows.
\end{proof}

As a consequence of  \Cref{lem-1,lem-B}, we obtain

\begin{lemma}\label{lem-dir} Let   $p \in \mR$ and let $\varphi$ be defined by
\eqref{def-varphi}.  Assume that \eqref{pro-eta} holds and  $\eta_j \neq 0$ for $j=1,
2, 3$.     Let $u \in L^2(0, + \infty)$ and let $y \in C([0, + \infty); L^2(0, L))
\cap L^2_{\loc}\big([0, +\infty); H^1(0, L) \big)$ be the unique solution of
\eqref{sys-y} and \eqref{IC-y}.  We have
\begin{equation}
\int_{0}^{+\infty} \int_0^L  |y(t, x)|^2 \varphi_x(x) e^{- i p t} \diff x \diff t  =
\int_{\mR}  \hu(z) \overline{\hu(z - p )} \Big(\frac{E}{|z|^{4/3}}  + O(|z|^{-5/3})
\Big) \diff z.
\end{equation}
\end{lemma}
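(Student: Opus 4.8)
The plan is to obtain \Cref{lem-dir} as a direct synthesis of the two preceding results, with no new analytic input. First I would apply \Cref{lem-1}: under the hypotheses at hand (condition \eqref{pro-eta} on the $\eta_j$, together with the standing assumption of the section that $y$ solves \eqref{sys-y}--\eqref{IC-y} and vanishes for $t \geq T$, so that all time integrals run over the compact set $[0,T]$), the identity \eqref{identity-1} gives
\[
\int_0^L \int_0^{+\infty} |y(t,x)|^2 \varphi_x(x) e^{-ipt}\,dt\,dx = \int_{\mR} \hat u(z)\,\overline{\hat u(z-p)} \int_0^L B(z,x)\,dx\,dz,
\]
where $B$ is the kernel of \eqref{def-B}. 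Exchanging the order of the $t$- and $x$-integrations on the left (Fubini, licit since $\varphi_x$ is bounded on $[0,L]$ and the time integrand is supported in $[0,T]$) puts the left-hand side in the form written in the statement.

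Next I would invoke \Cref{lem-B}. Its hypotheses --- namely \eqref{pro-eta} together with $\eta_j \neq 0$ for $j = 1,2,3$ --- are exactly those assumed in \Cref{lem-dir}, so it supplies the large-$|z|$ asymptotics
\[
\int_0^L B(z,x)\,dx = \frac{E}{|z|^{4/3}} + O(|z|^{-5/3}),
\]
with $E$ given by \eqref{def-D}. Substituting this expression for the kernel into the $z$-integral above yields precisely the asserted identity, the symbol $O(|z|^{-5/3})$ being read as the remainder $\int_0^L B(z,x)\,dx - E|z|^{-4/3}$, whose decay at infinity is controlled by \Cref{lem-B}.

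The one subtlety worth flagging is that one must not split the kernel additively near the origin: the formal term $E|z|^{-4/3}$ fails to be integrable at $z = 0$, whereas the genuine kernel $z \mapsto \int_0^L B(z,x)\,dx$ is integrated against the product $\hat u(z)\,\overline{\hat u(z-p)}$, which lies in $L^1(\mR)$ by Plancherel and Cauchy--Schwarz since $u \in L^2(0,+\infty)$. Thus I would keep the exact kernel in a neighborhood of the origin and read the displayed expansion only in the regime $|z| \to \infty$, which is all that \Cref{lem-B} provides and all that the later applications require. There is essentially no obstacle here: every piece of analytic work --- the Fourier/convolution identity and the delicate asymptotics of the roots $\lambda_j,\tlambda_j$ via \Cref{th:lambda_asym} --- has already been discharged in \Cref{lem-1,lem-B}, so the argument is a bookkeeping synthesis of the two.
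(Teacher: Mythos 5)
Your proposal is correct and takes essentially the same route as the paper: the paper gives \Cref{lem-dir} no separate proof at all, presenting it as an immediate consequence of \Cref{lem-1} (the convolution identity \eqref{identity-1}) and \Cref{lem-B} (the large-$|z|$ asymptotics of $\int_0^L B(z,x)\diff x$), which is exactly your synthesis. Your added caveat---that $\frac{E}{|z|^{4/3}}+O(|z|^{-5/3})$ must be read as the large-$|z|$ behaviour of the exact kernel rather than as an additive splitting valid near $z=0$---is a correct clarification of how the statement is to be interpreted and is consistent with how the paper later uses it.
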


Using \Cref{lem-dir}, we will establish the following result which is the key
ingredient for the analysis of the non-null-controllability for small time of the KdV system \eqref{intro-sys-KdV}.

\begin{proposition} \label{pro-monotone} Let   $p \in \mR$ and let $\varphi$ be
defined by \eqref{def-varphi}.  Assume that \eqref{pro-eta} holds and  $\eta_j \neq
0$
for $j=1, \, 2, \,  3$.     Let $u \in L^2(0, + \infty)$ and let $y \in C([0, + \infty);
L^2(0, L)) \cap L^2_{\loc}\big([0, +\infty); H^1(0, L) \big)$ be the unique solution
of \eqref{sys-y} and \eqref{IC-y}.  Assume that $u \not \equiv 0$,  $u(t) = 0$ for $t
> T$, and  $y(t, \cdot) = 0$ for large $t$. Then, there exists a real number $N(u) \ge 0$  such that $C^{-1}\|u\|_{H^{-2/3}} \leq N(u) \leq C\|u\|_{H^{-2/3}}$ for some  constant $C \ge 1$ depending only on $L$,  and \footnote{The map $u\mapsto N(u)$ is actually a norm, which
is (somewhat) explicitly given in the proof, by $N(u)^2 = \|\hat w\|_{L^2}^2$, where $w$
is defined in Eq~\eqref{eq-def-w}.}
\begin{equation}\label{pro-monotone-dir}
\int_{0}^{\infty} \int_0^L |y (t, x)|^2 e^{-ipt }\varphi_x(x) \diff x \diff t =
N(u)^2\big(E + O(1) T^{1/4}\big). 
\end{equation}
\end{proposition}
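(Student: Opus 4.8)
The plan is to use \Cref{lem-dir} to reduce the left-hand side of \eqref{pro-monotone-dir} to a Hermitian quadratic form in $\hu$, and then to peel off from it a genuine squared $L^2$-norm. Writing $c(z):=\int_0^L B(z,x)\diff x$, \Cref{lem-dir} reads
\[
\int_0^{\infty}\int_0^L|y(t,x)|^2\varphi_x(x)e^{-ipt}\diff x\diff t=\int_{\mR}\hu(z)\,\overline{\hu(z-p)}\,c(z)\diff z,\qquad c(z)=\frac{E}{|z|^{4/3}}+O(|z|^{-5/3}).
\]
My first step is to define the function $w$ through
\begin{equation}\label{eq-def-w}
\hw(z):=|z|^{-2/3}\hu(z),
\end{equation}
regularised near the origin so that $\hw\in L^2(\mR)$, and to set $N(u):=\|\hw\|_{L^2}$. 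Because the multiplier $|z|^{-2/3}$ is comparable to $(1+|z|)^{-2/3}$ away from $z=0$, the equivalence $C^{-1}\|u\|_{H^{-2/3}}\le N(u)\le C\|u\|_{H^{-2/3}}$ is immediate, and $N(u)=\|\hw\|_{L^2}\ge 0$ is manifestly real.

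Second, I would extract the leading term. Using the elementary expansion $|z|^{-4/3}=|z|^{-2/3}|z-p|^{-2/3}+O(|z|^{-7/3})$, valid for large $|z|$, and the factorisation $\hu(z)\overline{\hu(z-p)}\,|z|^{-2/3}|z-p|^{-2/3}=\hw(z)\overline{\hw(z-p)}$, the principal part of the integral becomes $E\int_{\mR}\hw(z)\overline{\hw(z-p)}\diff z$. Plancherel's identity then supplies the key formula
\[
\int_{\mR}\hw(z)\,\overline{\hw(z-p)}\diff z=\int_{\mR}|w(t)|^2e^{-ipt}\diff t,
\]
so that, up to the two remainders produced by the symbol errors $O(|z|^{-5/3})$ and $O(|z|^{-7/3})$, the main contribution equals
\[
E\,N(u)^2+E\int_{\mR}|w(t)|^2\big(e^{-ipt}-1\big)\diff t.
\]
The identity \eqref{pro-monotone-dir} thus amounts to showing that this phase error together with the two symbol remainders is bounded by $O(1)\,T^{1/4}N(u)^2$.

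The hard part is precisely this uniform error bound, for which a naive application of the Cauchy–Schwarz inequality only yields $O(1)N(u)^2$; hence the hypothesis that $u$ steers $0$ to $0$ must enter in an essential way. By \Cref{pro-Gen} both $\hu$ and the product $\hy=\hu\,M$ are entire of exponential type $T$, so $y$ is supported in $[0,T]$ and a Bernstein-type inequality for functions of exponential type localises the relevant frequency mass. The factor $e^{-ipt}-1$ is $O(|p|\,T)$ on the region where $w$ is concentrated, while the slowly decaying (of order $|t|^{-1/3}$) tail of $w$ outside $[0,T]$, governed by the low-frequency content of $\hu$, is the genuine difficulty and must be dominated through the weak $H^{-2/3}$ norm rather than the $L^2$ norm. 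Splitting the frequency integral at a $T$-dependent threshold and interpolating between the two regimes is what I expect to produce the exponent $1/4$. Once these estimates are secured, assembling the main term $E\,N(u)^2$ with the $O(T^{1/4})N(u)^2$ remainders gives \eqref{pro-monotone-dir}, and the assertion that $u\mapsto N(u)$ is a norm follows directly from \eqref{eq-def-w}.
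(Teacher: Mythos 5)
Your reduction via \Cref{lem-dir}, the factorisation of the main term, and the Plancherel step $\int_{\mR}\hw(z)\overline{\hw(z-p)}\diff z=\int_{\mR}|w(t)|^2e^{-ipt}\diff t$ follow the paper's skeleton, but there is a genuine gap exactly at what you yourself call the hard part: the $O(T^{1/4})N(u)^2$ error bound is never proved, and with your choice $\hw(z)=|z|^{-2/3}\hu(z)$ it cannot be obtained by the splitting-and-interpolation you describe. The paper's mechanism is different and is where the steering hypothesis actually enters: by \Cref{pro-Gen}, \Cref{lem-hol} and \Cref{lem-HG}, the function $v:=\hu/\cH$ is \emph{entire} of exponential type at most $T+\eps$, where $\cH=H/\Gamma$ and $\Gamma$ removes the finitely many common zeros of $G$ and $H$; one then sets $\hw:=v\,\cH_\gamma'$, with $\gamma$ chosen so that $\cH'(\cdot+i\gamma)$ has no real zeros. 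This $\hw$ is again entire of exponential type, so by Paley--Wiener $w$ is supported in $[-T,T]$, which yields the pointwise bound $|\hw(z)|\le C\|w\|_{L^1}\le CT^{1/2}\|w\|_{L^2}=CT^{1/2}N(u)$ on the real axis. That bound is what closes the argument: the part $|z|\le m$ of the error integral is $\lesssim Tm\,N(u)^2$, the part $|z|>m$ gains a factor $m^{-1/3}$ from \Cref{lem-B} together with the asymptotics $\cH_\gamma'(z)/\cH(z)=\alpha^{-1}|z|^{-2/3}\bigl(1+O(|z|^{-1/3})\bigr)$, the choice $m=T^{-3/4}$ gives $T^{1/4}$, and the phase factor $e^{-ipt}-1$ is harmless because $|t|\le T$ on the support of $w$. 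Your multiplier $|z|^{-2/3}$ has the same size on the real axis but none of this analytic structure: your $w$ is essentially $u$ convolved with a Riesz kernel $\sim|t|^{-1/3}$, so it has a heavy tail outside $[0,T]$ (as you note), the phase error is then only $O(1)N(u)^2$, and the only pointwise bound available, $|\hw(z)|\lesssim|\hu(z)|\le T^{1/2}\|u\|_{L^2}$, is in terms of $\|u\|_{L^2}$, which is not dominated by $N(u)\sim\|u\|_{H^{-2/3}}$. So neither the low-frequency term nor the phase term can be closed in terms of $N(u)^2$; the interpolation you ``expect'' to produce the exponent $1/4$ is precisely the missing proof.

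There is in fact a normalisation obstruction showing your $N(u)$ cannot satisfy the statement as written. Since $E\neq 0$ in the regime of interest, for small $T$ the identity \eqref{pro-monotone-dir} determines $N(u)^2$ up to a factor $1+O(T^{1/4})$. The paper's norm $\|\hu\,\cH_\gamma'/\cH\|_{L^2}$ (suitably normalised) and your norm $\||z|^{-2/3}\hu\|_{L^2}$ agree only up to multiplicative constants: the two multipliers differ by $1+O(|z|^{-1/3})$ at high frequency and by bounded factors that are not close to $1$ at low and moderate frequencies (in particular near the real zeros of $\cH$, where the paper's multiplier has poles cancelled by zeros of $\hu$ forced by the steering condition, while yours vanishes). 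Since the weight $|z|^{-4/3}$ makes the moderate frequencies dominant, the two candidate values of $N(u)^2$ generically differ by a fixed factor rather than $1+O(T^{1/4})$; given that the paper's version of the identity is true, yours, with leading constant exactly $E$, is in general false. The weight $\cH_\gamma'/\cH$ is therefore not a technical convenience but the substance of the proposition.
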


Here we use the following definition, for $s < 0$ and for $u \in L^2(\mR_+)$,
\[
\| u\|_{H^{s}(\mR)}^2 = \int_{\mR} |\hu|^2(1 + |\xi|^2)^{s} \diff
\xi,
\]
where $\hu$ is the Fourier transform of the extension of $u$ by $0$ for $t < 0$.

Before giving the proof of \Cref{pro-monotone}, we present one of its direct
consequences.  Denote $\xi_1 (t, x) = \Re \{ \varphi (x) e^{-ipt}\}$ and $\xi_2 (t, x)
= \Im \{ \varphi (x) e^{-ipt}\}$. Then
\begin{equation}
\xi_1(t, x) + i \xi_2 (t, x) = \varphi (x) e^{-ipt}.
\end{equation}
Denote $E_1 = \Re(E)$ and $E_2 = \Im (E)$,  and set
\begin{equation}\label{def-Psi}
\Psi(t, x) = E_1 \xi_1(t, x) + E_2 \xi_2(t, x).
\end{equation}

Multiplying \eqref{pro-monotone-dir} by  $\overline{E}$ and normalizing appropriately,
 we have

\begin{corollary} \label{corollary-dir} Let   $p \in \mR$ and let $\varphi$ be defined
by \eqref{def-varphi}.  Assume that \eqref{pro-eta} holds,  $\eta_j \neq 0$ for $j=1,
2, 3$,  and $E \neq 0$. There exists $T_* > 0$ such that, for any  (real) $u \in
L^2(0, + \infty)$ with $u(t) = 0$ for $t > T_*$ and $y(t,
\cdot) = 0$ for large $t$ where $y$ is the unique solution of \eqref{sys-y} and \eqref{IC-y}, 
we have
\begin{equation}\label{pro-monotone-CSQ1}
\int_{0}^\infty \int_0^{+\infty} y^2(t, x) \Psi_x(t, x) \diff x \diff t  \ge C \| u
\|_{H^{-2/3}(\mR)}^2. 
\end{equation}
\end{corollary}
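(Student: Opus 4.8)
The plan is to deduce \Cref{corollary-dir} directly from \Cref{pro-monotone} by a short algebraic manipulation together with a smallness choice of $T_*$. First I would rewrite the real weight $\Psi_x$ in complex form. Since $\xi_1 + i\xi_2 = \varphi(x) e^{-ipt}$ and $E = E_1 + iE_2$, a one-line check gives $E_1\xi_1 + E_2\xi_2 = \Re\{\bar E\, \varphi(x) e^{-ipt}\}$, and hence $\Psi_x(t,x) = \Re\{\bar E\, \varphi_x(x)\, e^{-ipt}\}$. Because the control $u$ is real and the linear system \eqref{sys-y} has real coefficients, the solution $y$ is real-valued, so $y^2$ is real (and nonnegative). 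This is the key structural point: it lets me pull the real part outside the (real) integral, so that, with the $x$-integration understood over $(0,L)$ where $y$, $\varphi$, $\Psi$ live,
\begin{equation*}
\int_{0}^{\infty} \int_0^{L} y^2(t,x)\, \Psi_x(t,x) \diff x \diff t = \Re\left\{ \bar E \int_{0}^{\infty} \int_0^{L} |y(t,x)|^2\, \varphi_x(x)\, e^{-ipt} \diff x \diff t \right\}.
\end{equation*}

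Next I would invoke \Cref{pro-monotone}, which identifies the inner double integral as $N(u)^2\big(E + O(1)\,T_*^{1/4}\big)$, with $N(u)$ a norm satisfying $c_0^{-1}\|u\|_{H^{-2/3}(\mR)} \le N(u) \le c_0\|u\|_{H^{-2/3}(\mR)}$ for a constant $c_0 \ge 1$ depending only on $L$. Multiplying by $\bar E$ and taking the real part, using $\Re(\bar E E) = |E|^2$ and $|\Re(\bar E\cdot O(1))| \le C|E|$, yields
\begin{equation*}
\int_{0}^{\infty} \int_0^{L} y^2\, \Psi_x \diff x \diff t = N(u)^2\big( |E|^2 + O(|E|)\, T_*^{1/4} \big).
\end{equation*}
Since $E \neq 0$, I would then fix $T_* > 0$ so small that the error term is at most $\tfrac12 |E|^2$ in absolute value (it suffices that $T_*^{1/4} \le |E|/(2C)$), which forces the right-hand side to be at least $\tfrac12 |E|^2 N(u)^2$. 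Combining with the lower bound $N(u) \ge c_0^{-1}\|u\|_{H^{-2/3}(\mR)}$ gives \eqref{pro-monotone-CSQ1} with explicit constant $\tfrac12 |E|^2 c_0^{-2} > 0$.

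Given \Cref{pro-monotone}, this deduction contains no genuine difficulty: the only things to verify are the identity $\Psi = \Re\{\bar E\, \varphi\, e^{-ipt}\}$ and the reality of $y$ that permits extracting $\Re$, together with the elementary smallness choice of $T_*$. The substantive work is entirely upstream, inside \Cref{pro-monotone} itself—namely showing that, after integrating the leading $|z|^{-4/3}$ asymptotics of $\int_0^L B\,\diff x$ from \Cref{lem-B} against $\hu(z)\overline{\hu(z-p)}$, the outcome is exactly the coercive factor $N(u)^2 \sim \|u\|_{H^{-2/3}}^2$ plus an error of size $T^{1/4}$. That is where the fractional Sobolev norm $H^{-2/3}$ and the crucial role of the small time $T$ arise, and it is the heart of the matter; the present corollary is simply the mechanism by which this coercivity is transferred to the sign-definite quantity $\int y^2 \Psi_x$.
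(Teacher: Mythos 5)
Your proposal is correct and follows exactly the paper's route: the paper obtains the corollary by multiplying \eqref{pro-monotone-dir} by $\overline{E}$, using the reality of $y$ to identify $\int y^2\Psi_x$ with $\Re\bigl\{\bar E\int |y|^2\varphi_x e^{-ipt}\bigr\}$ via $\Psi=\Re\{\bar E\,\varphi e^{-ipt}\}$, and then choosing $T_*$ small enough that the $O(1)T_*^{1/4}$ error is dominated by $|E|^2$, with $N(u)\sim\|u\|_{H^{-2/3}(\mR)}$ supplying the coercivity. Your write-up simply makes explicit the details the paper compresses into ``multiplying \eqref{pro-monotone-dir} by $\overline{E}$ and normalizing appropriately,'' including the (harmless) reading of the inner integral as over $(0,L)$.
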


We are ready to give the

\begin{proof}[Proof of \Cref{pro-monotone}] By \Cref{pro-Gen},
\[
\hu G/ H \mbox{ is an entire function}.
\]
By \Cref{lem-hol},  $G$ and $H$ are entire functions. The same holds for $\hu$ since $u(t
) = 0$ for large $t$.
One can show that the number of  common roots of $G$ and $H$ in $\mC$
is finite, see \Cref{lem-HG} in \Cref{B}. Let $z_1, \dots, z_k$ be the distinct common roots of $G$ and $H$ in $\mC$. There exist $m_1, \dots, m_k \in \N$ such that \footnote{One can prove that $m_j = 1$ for $1 \le j \le k$ by \Cref{lem-detQ} in \Cref{B}, but this is not important at this stage.}, with
\[
\Gamma(z) = \prod_{j=1}^k (z- z_j)^{m_j} \quad  \mbox{ in } \mC,
\]
the following two functions are entire
\begin{equation}
\quad \cG(z) : = \frac{G(z)}{\Gamma(z)} \quad \mbox{ and } \quad \cH(z) : = \frac{H(z)}{\Gamma(z)},
\end{equation}
and $\cG$ and $\cH$ have no common roots. Since
\[
 \hu \cG / \cH = \hu G/ H
\]
which is an  entire function, it follows that the function $v$ defined by
\begin{equation}
v(z) = \hu(z)/ \cH(z) = \hu(z) \frac{\Gamma(z)\Xi(z)}{\det Q(z)} \mbox{ in } \mC
\end{equation}
is also an entire function.

It is clear that
\begin{equation}
\hu(z) = v(z) \cH(z) \mbox{ in } \mC.
\end{equation}

We consider the holomorphic function $v$  restricted to  ${\mathcal L}_m: = \Big\{z \in
\mC;  |\Re(z)| \le c m , \; -\big( (2 m+1) /
(\sqrt{3} L) \big)^3 \le  \Im (z) \le \big( (2 m+1) / (\sqrt{3} L) \big)^3 \Big\}$ with
large $m \in \N$. Using \Cref{pro-Gen} to bound $\hat u$,  and  \Cref{lem-bh-detQ} in \Cref{B} to bound $(\det Q(z))^{-1}$, we can bound 
$v$ on $\partial \mathcal L_m$ (and thus also in the interior of $\mathcal L_m$) by
\begin{equation}\label{vvv-p0}
|v(z)| \le  C_\eps e^{(T+ \eps/2) \big( (2 m+1) / (\sqrt{3} L) \big)^3} \mbox{ in
} {\mathcal L}_m,
\end{equation}
for all $\eps >0$, since, for large $|z|$,  
\[
|\Xi (z)| \le C |z|.  
\]
Note that the constant $C_\eps$ can be chosen independently of $m$. Here we used the
fact
\[
|\hu (z)| \le C e^{T |\Im(z)|} \mbox{ for } z \in \mC.
\]
On the other hand, applying \Cref{th:lambda_asym} and item 2 of \Cref{lem-bh-detQ}, we
have
\begin{equation}\label{vvv-p2}
|v(z)| \le  C_\eps e^{(T+  \eps) |z|} \mbox{ in }
 \Big\{z \in \mC;  |\Re(z)| \ge c m , \; -\big( (2 m+1) /
(\sqrt{3} L) \big)^3 \le  \Im (z) \le \big( (2 m+1) / (\sqrt{3} L) \big)^3 \Big\}.
\end{equation}
Combining \eqref{vvv-p0} and \eqref{vvv-p2} yields
\begin{equation}\label{vvv-p3}
|v(z)| \le C_\eps e^{(T+   \eps) |z|} \mbox{ in } \mC.
\end{equation}

Since $\cH$ is a non-constant entire function, there exists $\gamma > 0$ such that
\begin{equation}\label{pro-monotone-H'}
\cH'(z + i \gamma) \neq 0 \mbox{ for all } z \in \mR.
\end{equation}
Fix such an $\gamma$ and denote $\cH_\gamma (z) = \cH(z + i \gamma) $ for $z \in \mC$.

Let us prove some asymptotics for $\cH_\gamma$. Since $\sum_{j=1}^3 \lambda_j = 0$, it follows from \eqref{eq-defQ} that 
\[
 \det Q =
(\lambda_2-\lambda_1)e^{ - \lambda_3 L} +  (\lambda_3-\lambda_2)e^{ - \lambda_1 L} +   (\lambda_1-\lambda_3)e^{ - \lambda_2 L}.
\]
We use the convention in  \Cref{th:lambda_asym}.  Thus, by \Cref{th:lambda_asym}, for fixed $\beta \ge 0$, 
\begin{equation}\label{eq-H-asym}
\cH(z + i \beta) = \frac{\det Q(z +  i \gamma)}{\Xi(z + i \gamma)\Gamma(z + i \gamma)} = \kappa 
z^{-2/3- \sum_{i=1}^k m_j}e^{ - \mu_1 L z^{1/3}}\big(1+O(z^{-1/3})\big), 
\end{equation}
where 
\[
\kappa = -  \frac{1}{ (\mu_2 - \mu_1) (\mu_1 - \mu_3)}. 
\]
We can also compute the asymptotic expansion of $\cH'(z + i \beta) $, either  by explicitly computing the asymptotic behavior of $\lambda_j'(z + i \beta)$ for large positive $z$ (formally, one just needs to take the derivative of \eqref{eq-H-asym} with respect to $z$), or by using the Cauchy
integral formula on the contour $\partial D(z,r)$ for some fixed $r$ to justify
differentiating Eq.~\eqref{eq-H-asym}. We get:
\[
 \cH'(z + i \beta) = -  \frac{\mu_1 L }{3} z^{-2/3}  \kappa 
z^{-2/3- \sum_{i=1}^k m_j}e^{ - \mu_1 L z^{1/3}}\big(1+O(z^{-1/3})\big). 
\]
We then get 
\[
\lim_{z \in \mR, z \to + \infty} \cH(z) |z|^{-2/3} / \cH_\gamma'(z)  = \alpha: = 3 e^{ - i
\pi/6} / L.
\]
Similarly, we obtain
\[
\lim_{z \in \mR, z \to - \infty} \cH(z) |z|^{-2/3} / \cH_\gamma'(z)  = - \bar \alpha.
\]
Moreover, we have
\begin{equation}\label{pro-monotone-H}
\big |\cH(z) |z|^{-2/3} -   \alpha  \cH_\gamma'(z) \big| \le C |\cH(z)|
|z|^{-1} \le C |\cH_\gamma'(z)| |z|^{-1/3} \mbox{ for large positive $z$},
\end{equation}
and
\begin{equation}\label{pro-monotone-H-*}
\big |\cH(z) |z|^{-2/3} + \bar \alpha  \cH_\gamma'(z) \big| \le C |\cH(z)|
|z|^{-1} \le C |\cH_\gamma'(z)| |z|^{-1/3} \mbox{ for large negative $z$}.
\end{equation}

Set
\begin{equation}\label{eq-def-w}
\hw(z) =  v(z) \cH_\gamma'(z) = \hu(z) \cH'_\gamma(z) \cH(z)^{-1}.
\end{equation}
Then $\hw$ is an entire function and satisfies Paley-Wiener's conditions for the
interval $(-T - \eps, T + \eps)$ for all $\eps >0$, see e.g. \cite[19.3 Theorem]{Rudin-RC}. Indeed, this follows from the
facts  $|\hw(z)| \le C_\eps  |v(z)| e^{\eps |z|}$ for $z \in \mC$ by \Cref{th:lambda_asym}, $|v(z)| \le C_\eps e^{(T + \eps)
|z|}$ for $z \in \mC$ by \eqref{vvv-p3}, $|\cH_\gamma '(z) v(z)| =
|\cH_\gamma'(z)\cH(z)^{-1} \hu(z)| \le |\hat u(z)|$ for  real $z$ with large $|z|$,
so that $\int_{\mR} |\hu|^2 < + \infty$.

We claim that\footnote{Recall that $B$ was defined in Eq.\eqref{def-B}.}
\begin{equation}\label{pro-monotone-BB}
\left| \int_0^L B(z, x) \diff x \right| \le \frac{C}{(|z| + 1)^{4/3}} \mbox{ for } z \in
\mR.
\end{equation}
In fact, this inequality follows from  \Cref{lem-B} for large $z$,  and from 
\Cref{lem-detQ} in \Cref{B} otherwise since, for if $z$ is a real solution of the 
equation $H(z) = 0$, which is simple by \Cref{lem-detQ}, it holds,  by \Cref{lem-detQ} 
again,
\[
\sum_{j=1}^3 (e^{\lambda_{j+1} L } - e^{\lambda_{j} L }) e^{\lambda_{j+2} x} \mathop{=}^{\eqref{lem-detQ-cl2}} 0.
\]

From \eqref{pro-monotone-H'}, \eqref{pro-monotone-H},  \eqref{pro-monotone-H-*},  and \eqref{pro-monotone-BB},
we derive that
\begin{equation}\label{pro-monotone-B}
\left|\hu (z) \overline{\hu (z - p)} \int_0^L B(z, x) \diff x \right|  \le  C |\hw(z)|
|\hw (z-p)| \mbox{ for } z \in \mR.
\end{equation}

Note that, for $m \ge 1$,
\begin{multline*}
\left|\int_{|z| > m} \hu (z) \overline{\hu (z - p)} \int_0^L B(z, x) \diff x \diff z - E
|\alpha|^2
\int_{|z| > m}  \hw (z) \overline{\hw(z-p)} \diff z \right| \\[6pt]
\le \int_{|z| > m} \left| \hu (z) \overline{\hu (z - p)} \Big( \int_0^L B(z, x) \diff x
- E |z|^{-4/3} \Big)   \right| \diff z \\[6pt]
+ |E|  \int_{|z| > m} \left| |\alpha|^2 \hw (z)  \overline{\hw (z - p)} -  |z|^{-4/3} \hu
(z)
\overline{\hu (z - p)} \right| dz.
\end{multline*}
Using \eqref{pro-monotone-H} \eqref{pro-monotone-H-*}, and \Cref{lem-1,lem-B}, we
derive  that
\begin{multline*}
\left|\int_{|z| > m} \hu (z) \overline{\hu (z - p)} \int_0^L B(z, x) \diff x \diff z - E
|\alpha|^2
\int_{|z| > m} \hw (z) \overline{\hw(z-p)} \diff z \right| \\[6pt]
\le  C \int_{|z| > m} |\hw (z)| |\hw (z - p)|  |z|^{-1/3} dz.
\end{multline*}
We derive from  \eqref{pro-monotone-H'} and \eqref{pro-monotone-B} that
\begin{multline*}
\left|\int_{\mR} \hu (z) \overline{\hu (z - p)} \int_0^L B(z, x) \diff x \diff z  - E
|\alpha|^2
\int_{\mR} \hw (z) \overline{ \hw(z-p)} \diff z  \right| \\[6pt]
\le  C \int_{|z| \le m} |\hw (z)| |\overline{\hw (z - p)}| \diff  z + C m^{-1/3}
\int_{|z| > m} |\hw (z)| |\hw(z-p)| \diff z.
\end{multline*}
Since, for $z \in \mR$,
\[
|\hw(z)| \le C\| w\|_{L^1} = C \|w \|_{L^1(-T, T)} \le C T^{1/2} \|w \|_{L^2(\mR)},
\]
we derive that
\begin{equation*}
\left|\int_{\mR} \hu (z) \overline{\hu (z - p)} \int_0^L B(z, x) \diff x \diff z  - E
|\alpha|^2
\int_{\mR} \hw (z)  \overline{\hw(z-p)} \diff  z \right|
\le C \int_{-T}^{T} \Big( T m + m^{-1/3} \Big) |w|^2.
\end{equation*}
Using the fact
\[
\int_{\mR} \hw (z) \overline{ \hw(z-p) } \diff z  =  \int_{\mR} |w(t)|^2 e^{-itp} \diff t
=
 \int_{-T}^{T} |w(t)|^2 e^{-itp} \diff t,
\]
we obtain, by choosing $m = 1/ T^{3/4}$,
\[
\int_{\mR} \hu (z) \overline{\hu (z - p)} \int_0^L B(z, x) \diff x \diff z =  E
|\alpha|^2
\int_{-T}^T |w(t)|^2 (1 +  O(1) T^{1/4}) \diff  t.
\]
The conclusion follows by noting that
\[
 \int_{\mR} |w(t)|^2  =  \int_{\mR} |\hw (z)|^2 \diff  z \ge C \int_{\mR}
\frac{|\hu(z)|^2}{1 + |z|^{4/3}} \diff z,
\]
and  by normalizing $u$ such that $|\alpha| \| w \|_{L^2(\mR)} = 1$.
\end{proof}

\section{Useful estimates for the linear KdV equations}\label{sect-L-KdV}

In this section, we establish several results for the linear  KdV equations which will
be used in the proof of \Cref{thm-main}. Our study of  the inhomogeneous KdV equations
is based on three elements. The first one is
on the information of the KdV equations explored previously. The second one is a
connection between the KdV equations and the KdV-Burgers equations, as previously
suggested in \cite{Kato83, Bona09}. The third one is on estimates for the
KdV-Burgers equations with periodic boundary condition.   This section contains two
subsections. The first one is on  inhomogeneous KdV-Burgers equations with periodic
boundary condition and the second one is on the inhomogeneous KdV equations.

\subsection{On the linear KdV-Burgers equations}  \label{sect-KdVB}

In this section, we derive several estimates for the solutions of the linear
KdV-Burgers equations using low regular data information.  The main result of this
section is the following result:

\begin{lemma}\label{lem-kdvB} Let $L > 0$  and $f_1 \in L^1\big(\mR_+; L^1(0, L) \big)$
and $f_2 \in L^1\big(\mR_+; W^{1, 1} (0, L) \big)$
be such that
\begin{equation}\label{lem-kdvB-f1}
\int_0^L f_1(t, x) \diff x  = 0 \mbox{ for a.e. } t>0,
\end{equation}
and
\begin{equation}\label{lem-kdvB-f2}
f_2(t, 0) = f_2(t, L) = 0 \mbox{ for a.e. } t > 0.
\end{equation}
Set $f = f_1 + f_{2, x}$ and assume that $f \in L^1\big(\mR_+; L^2(0, L) \big)$.
Let $y$ be the unique solution in $C\big([0, + \infty); L^2(0, L)
\big) \cap L^2_{\loc}\big([0, + \infty); H^1(0, L) \big)$, which is periodic in
space, of the system
\begin{equation}\label{sys-kdvB}
y_t (t, x) + 4 y_x (t, x) + y_{xxx} (t, x) - 3 y_{xx} (t, x) = f(t, x) \mbox{ in } (0,
+\infty) \times (0, L),
\end{equation}
and
\begin{equation}\label{bdry-kdvB}
y(t = 0, \cdot)  = 0 \mbox{ in } (0, L).
\end{equation}
We have, for $x \in [0, L]$,
\begin{equation}\label{lem-kdvB-cl1}
\| y(\cdot, x) \|_{L^2 (\mR_+)} + \| y_x(\cdot, x) \|_{H^{-1/3}(\mR)}   \le C \|
f\|_{L^1(\mR_+ \times (0, L))},
\end{equation}
and
\begin{equation}\label{lem-kdvB-cl1-*}
\| y(\cdot, x) \|_{H^{-1/3} (\mR)} + \| y_x(\cdot, x) \|_{H^{-2/3}(\mR)} + \| y
\|_{L^2(\mR_+; H^{-1} (0, L))}  \le C \|
(f_1, f_2)\|_{L^1(\mR_+ \times (0, L))}.
\end{equation}
Assume that  $f(t, \cdot) = 0$ for $t > T$. We have, for all $\delta > 0$, and for all
$t \ge T + \delta$,
\begin{equation}\label{lem-kdvB-cl3}
| y_t (t, x)| + |y_x(t, x)|
\le C_\delta \| (f_1, f_2)\|_{L^1(\mR_+ \times (0, L))} \mbox{ for } x \in [0, L].
\end{equation}
Here $C$ (resp. $C_\delta$) denotes a positive constant depending only on $L$ (resp.
 $L$ and $\delta$).
 \end{lemma}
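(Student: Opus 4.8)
The plan is to solve \eqref{sys-kdvB}--\eqref{bdry-kdvB} explicitly by Fourier series in space and then reduce every estimate to a multiplier bound in the time-frequency variable. Writing $\omega_k = 2\pi k/L$ and expanding $y(t,x)=\sum_{k\in\mZ}\hat y_k(t)e^{i\omega_k x}$, each mode solves the scalar ODE $\partial_t\hat y_k + p(k)\hat y_k=\hat f_k$ with $\hat y_k(0)=0$, where
\[
p(k)=3\omega_k^2+i(4\omega_k-\omega_k^3).
\]
The hypotheses \eqref{lem-kdvB-f1} and \eqref{lem-kdvB-f2} give $\hat f_0(t)\equiv 0$, so the non-dissipative mode $k=0$ is never excited and $\hat y_0\equiv 0$; for $k\neq 0$ one has $\Re p(k)=3\omega_k^2>0$, which is the source of all the smoothing. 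By Duhamel, $\hat y_k(t)=\int_0^t e^{-p(k)(t-s)}\hat f_k(s)\,ds$.

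The core of the proof is a family of homogeneous trace estimates for the semigroup $S(t)$ with symbol $e^{-p(k)t}$. I would first prove that, for zero-mean data $v$, the bound $\big\|t\mapsto [S(t)v](x)\big\|_{L^2(\mR_+)}\le C\|v\|_{L^1(0,L)}$ holds uniformly in $x$, together with its differentiated and negative-order analogues. Squaring and integrating in $t$ produces the double sum $\sum_{k,k'}\hat v_k\overline{\hat v_{k'}}\,e^{i(\omega_k-\omega_{k'})x}\big/\big(p(k)+\overline{p(k')}\big)$; the diagonal $k=k'$ contributes $\sum_k|\hat v_k|^2/(6\omega_k^2)$, which is summable since $|\hat v_k|\le\|v\|_{L^1}$ and $\sum_{k\neq0}\omega_k^{-2}<\infty$, while the off-diagonal terms are controlled using that $\Im\big(p(k)+\overline{p(k')}\big)$ contains the cubic difference $\omega_k^3-\omega_{k'}^3$, so that $|p(k)+\overline{p(k')}|$ grows like $|k-k'|(k^2+k'^2)$ and the off-diagonal sum converges absolutely. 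The negative-order time norms of $y_x$ are handled by the scaling $|\tau|\sim\omega_k^3$ built into the denominator $|i\tau+p(k)|$: a factor $\omega_k$ coming from $\partial_x$ is exactly balanced by $\langle\tau\rangle^{-1/3}$, which is why $y_x$ lands in $H^{-1/3}$ in \eqref{lem-kdvB-cl1} and in $H^{-2/3}$ in \eqref{lem-kdvB-cl1-*}.

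Once the homogeneous estimates are in place, the inhomogeneous bounds follow by Duhamel combined with Minkowski's integral inequality, which turns the $L^1$-in-time norm of the forcing into a supremum over the single-time data estimate. The only structural subtlety is the distinction between the two right-hand sides. For the $f_1$-part one applies the homogeneous estimate directly. For the $f_{2,x}$-part one first integrates by parts, $\int_0^t S(t-s)\partial_x f_2(s)\,ds=\int_0^t \partial_x S(t-s)f_2(s)\,ds$, moving the derivative onto the semigroup; the extra factor $i\omega_k$ in the multiplier costs exactly $1/3$ of a time-derivative through the same $\tau\sim\omega_k^3$ scaling, which is precisely why \eqref{lem-kdvB-cl1-*} is \eqref{lem-kdvB-cl1} shifted down by $1/3$ in time regularity while only requiring the weaker norm $\|f_2\|_{L^1}$ rather than $\|\partial_x f_2\|$. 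The mixed norm $\|y\|_{L^2(\mR_+;H^{-1}(0,L))}$ is obtained by summing the $L^2_t$ bound of each mode against the weight $\langle\omega_k\rangle^{-2}$.

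Finally, the pointwise estimate \eqref{lem-kdvB-cl3} for $t\ge T+\delta$ exploits the parabolic part directly: once $f(t,\cdot)=0$ for $t>T$, the solution evolves by the homogeneous semigroup, and for $s\le T$ the factor $|e^{-p(k)(t-s)}|=e^{-3\omega_k^2(t-s)}\le e^{-3\omega_k^2\delta}$ dominates any polynomial power of $\omega_k$. Hence $\sum_k\omega_k^m|\hat y_k(t)|$ converges for every $m$, giving uniform bounds on $y_x$ and, through the equation \eqref{sys-kdvB}, on $y_t$, with constants depending on $\delta$. I expect the main obstacle to be the careful bookkeeping of the off-diagonal sums and the sharp dispersive multiplier estimates — in particular verifying that the $\langle\tau\rangle^{-1/3}$ weights exactly balance the spatial derivatives uniformly in $x$ — rather than any conceptual difficulty; the dissipative real part $3\omega_k^2$ keeps every sum absolutely convergent and removes the delicate resonance phenomena that would arise for pure KdV, in the spirit of the KdV--Burgers approach of Bona et al. and Molinet--Ribaud.
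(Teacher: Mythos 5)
Your scheme shares its skeleton with the paper's proof: both expand in spatial Fourier series (the paper normalizes $L=2\pi$), use \eqref{lem-kdvB-f1}--\eqref{lem-kdvB-f2} to kill the non-dissipative zero mode, and reduce everything to dispersive sum bounds. The paper takes the Fourier transform in time of the inhomogeneous problem and invokes the Bourgain-type bound of \Cref{lem-Pre1}, whereas you use Duhamel plus homogeneous pointwise-trace estimates plus Minkowski; since time translation is an isometry of $H^{s}(\mR)$, that repackaging is legitimate. Your treatment of \eqref{lem-kdvB-cl1} is sound (in the off-diagonal sum, note the finitely many pairs where $\omega_k^2+\omega_k\omega_{k'}+\omega_{k'}^2=4$ and the imaginary part of $p(k)+\overline{p(k')}$ vanishes, so the real part $3(\omega_k^2+\omega_{k'}^2)$ must be used there), and your argument for \eqref{lem-kdvB-cl3} is exactly the paper's Step 3.

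The genuine gap is the $f_{2,x}$ contribution to \eqref{lem-kdvB-cl1-*}, i.e.\ the bound $\| y_x(\cdot,x)\|_{H^{-2/3}(\mR)} \le C \|f_2\|_{L^1}$. There $y_x$ carries \emph{two} spatial derivatives of the kernel: in time-frequency (the paper's \eqref{lem-kdvB-cl1-*-p1}) one must control $\sum_{n\neq 0} \frac{n^2 e^{in(x-\xi)}}{i(z+4n-n^3)+3n^2}$ knowing only $|\hf_2(z,\xi)|\le C\|f_2\|_{L^1}$. Taking absolute values, as your Minkowski/absolute-convergence strategy requires, leaves $\sum_{n\neq 0} \frac{n^2}{|z+4n-n^3|+3n^2}$, whose terms are of size $1/|n|$ as soon as $|n|^3 \gtrsim |z|$: this series \emph{diverges} for every $z$, and no weight $\langle z\rangle^{-2/3}$ in the time variable can repair a divergent sum in $n$. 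This is exactly why \Cref{lem-Pre1} is stated only for $j=0,1$ (one derivative is the most that absolute convergence tolerates), and it falsifies your claims that the extra derivative is ``exactly balanced'' by $\langle\tau\rangle^{-1/3}$ and that ``the dissipative real part keeps every sum absolutely convergent'': the balance holds mode by mode in the region $|\tau|\sim \omega_k^3$, not after summation over all modes. The missing idea is a cancellation (summation-by-parts) argument, which is how the paper's Step 2 closes the estimate: since $\frac{n^2}{i(z+4n-n^3)+3n^2}+\frac{1}{in}$ decays like $1/n^2$ in the non-resonant range, the kernel equals $-\sum_{n\neq 0}\frac{e^{in(x-\xi)}}{in}+O\big(\ln(|z|+2)\big)$ uniformly in $x,\xi$ (see \eqref{lem-kdvB-cl1-*-p2}), the first term being the bounded sawtooth function; only then does the $H^{-2/3}$ weight give a finite $z$-integral. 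Your Duhamel--Minkowski framework can absorb this fix by proving the trace bound for $\partial_x^2 S(t)$ on $L^1$ data with the cancellation built in, but as written your proof of \eqref{lem-kdvB-cl1-*} does not close.
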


\begin{remark} Using the standard  energy method, as for the KdV equations,   one
can prove that if $f  \in L^1(\mR_+, L^2(0, L))$ with $\int_0^L f(t, x) \diff x  =0$ for
a.e. $t>0$ (this holds by \eqref{lem-kdvB-f1} and \eqref{lem-kdvB-f2}), then
\eqref{sys-kdvB}-\eqref{bdry-kdvB} has a unique solution in $C([0, + \infty); L^2(0,
L)) \cap L^2([0, + \infty); H^1(0, L))$  which is periodic in space.
\end{remark}

In the proof of \Cref{lem-kdvB}, we use the following elementary estimate, which has
its root in the work of Bourgain \cite{Bourgain93}.

\begin{lemma} \label{lem-Pre1} There exists a positive constant $C$ such that, for $j
= 0, 1$, and $z \in \mR$,\footnote{We recall that an absolutely convergent sum is nothing 
but the integral with the counting measure, which is $\sigma$-finite. In the 
following, we will often exchange sums and integrals without comments, the justification 
being one of Fubini's theorem.} 
\begin{equation}\label{lem-Pre1-cl}
\sum_{n\neq 0} \frac{|n|^j}{|z + 4 n - n^3| + n^2} \le  \frac{C \ln
(|z|+2)}{(|z| + 2)^{\frac{2-j}{3}}}.
\end{equation}
\end{lemma}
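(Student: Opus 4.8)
The plan is to prove the bound by a direct, elementary analysis of the cubic phase. First I would reduce to $z\ge 0$: the substitution $n\mapsto -n$ turns $z+4n-n^3$ into $z-4n+n^3=-\big((-z)+4n-n^3\big)$, so the left-hand side of \eqref{lem-Pre1-cl} is invariant under $z\mapsto -z$, and it suffices to treat $z\ge 0$. For $z$ bounded (say $0\le z\le z_0$) the estimate is trivial, since each denominator is $\ge n^2\ge 1$ and the terms decay like $|n|^{j-3}$, so the whole sum is bounded by a constant while the right-hand side is bounded below by a positive constant; hence I may assume $z\ge z_0$, with $z_0$ a large constant to be fixed.

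Next I would introduce $h(n):=n^3-4n$, so that $|z+4n-n^3|=|z-h(n)|$, and let $n_0>0$ be the unique root of $h(n_0)=z$; a short bootstrap on $n_0^3=z+4n_0$ gives $\tfrac12 z^{1/3}\le n_0\le 2z^{1/3}$ for $z\ge z_0$. The key qualitative fact is that $h$ is strictly increasing for $n\ge 2$ with $h'(n)=3n^2-4$, so consecutive values $h(n+1)-h(n)=3n^2+3n-3$ are spaced $\asymp n^2$ apart; the only integers producing a small denominator are those with $n$ close to $n_0$.

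I would then split $\sum_{n\ne0}$ into a non-resonant part and a resonant part. The non-resonant part consists of (i) $n<0$, where writing $n=-m$ gives $|z-h(n)|=z+m^3-4m\ge \tfrac12(z+m^3)$ for $m\ge 3$; (ii) positive $n\le n_0/2$, where $h(n)\le z/8$ forces $|z-h(n)|\ge 7z/8$; and (iii) positive $n\ge 2n_0$, where $h(n)\ge 8z$ forces $|z-h(n)|\ge \tfrac14 n^3$. In each of these the denominator is $\ge c\,(z+|n|^3)$ (or $\ge cz$), so the corresponding sums are dominated by $\frac1z\sum_{|n|\le z^{1/3}}|n|^j+\sum_{|n|\ge z^{1/3}}|n|^{j-3}$, which is $\le C\,z^{(j-2)/3}=C\,(z+2)^{-(2-j)/3}$, with no logarithm.

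The resonant part $n_0/2<n<2n_0$ is where the logarithm is produced and is the main obstacle. Here $n\asymp z^{1/3}$, hence $|n|^j\asymp z^{j/3}$ and $n^2\asymp z^{2/3}$, while $h'(n)\asymp z^{2/3}$, so the increasing values $h(n)$ are spaced $\asymp z^{2/3}$ apart. I would decompose dyadically in the size of $|z-h(n)|$: for $k\ge 0$ the shell $\{n:\,2^k\le |z-h(n)|<2^{k+1}\}$ corresponds to $h(n)$ lying in a set of total length $2^{k+1}$, so by the spacing it contains at most $C\,(2^k z^{-2/3}+1)$ integers, each contributing at most $C\,z^{j/3}/(2^k+z^{2/3})$. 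Summing the product over $0\le k\le C\log z$ (the range where $|z-h(n)|\le Cz$) and treating the two regimes $2^k\le z^{2/3}$ and $2^k>z^{2/3}$ separately, one checks that \emph{each} scale contributes $\asymp z^{(j-2)/3}$, so the total is $\le C\,(\log z)\,z^{(j-2)/3}$. Collecting the non-resonant and resonant bounds yields the claim. The delicate point throughout is the bookkeeping in the resonant region: verifying the per-shell integer count against the $z^{2/3}$ spacing, and confirming that exactly one factor of $\log z$ appears, uniformly in $z\ge z_0$ and in $j\in\{0,1\}$.
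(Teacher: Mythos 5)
Your proof is correct, and it exploits the same underlying phenomenon as the paper's proof --- the denominator grows like (distance from the resonance) times the local spacing $\asymp z^{2/3}$ --- but the bookkeeping is genuinely different. The paper recenters the sum at the near-resonant integer: writing $n=m+k$ with $k^3\le z<(k+1)^3$, it proves the single pointwise bound $|z+4(m+k)-(m+k)^3|+(m+k)^2\ge C(|m|+1)(|k|+2)^2$ for $|m|\le 2|k|+2$ (and $\ge C|m|^3$ beyond), after which the logarithm appears as the harmonic sum $\sum_{|m|\le 2|k|+2}(|m|+1)^{-1}$. You instead split off three non-resonant regions and, in the window $n\asymp z^{1/3}$, decompose dyadically in the size of $|z-h(n)|$, counting integers per shell via the spacing $h(n+1)-h(n)\asymp z^{2/3}$; the logarithm then appears as the number $O(\log z)$ of dyadic scales. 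The two mechanisms are equivalent in content, but the paper's version is shorter (one inequality plus a harmonic sum, with no separate resonant/non-resonant case analysis), while yours is the more systematic shell-counting argument and would adapt more readily to phases other than $n^3-4n$. One small detail to patch: your shells $2^k\le|z-h(n)|<2^{k+1}$, $k\ge 0$, do not cover the integers with $|z-h(n)|<1$; by the spacing there are at most two such $n$, and each contributes $O\bigl(z^{(j-2)/3}\bigr)$ thanks to the $n^2\asymp z^{2/3}$ term in the denominator, so the fix is one line.
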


\begin{proof} For $z \in \mR$, let $k \in \mZ$ be such that $k^3 \le z <  (k+1)^3$. It
is clear that
\begin{equation}\label{lem-Pre1-p1}
\sum_{n\neq 0} \frac{|n|^j}{|z + 4 n - n^3| + n^2} =  \sum_{m+
k \neq 0} \frac{|m+k|^j}{|z + 4 (m+k) - (m+k)^3| + (m+k)^2}.
\end{equation}
We split the sum in two parts, one for $|m|\leq 2|k|+2$ and one for $|m|>2|k|+2$.  Since $k^3 \le z <  (k+1)^3$, one can check that, for $m \in \mZ$,  $m + k \neq 0$,  and $|m| \le 2 |k| + 2$, 
$$
|z+4(m+k)-(m+k)^3| + |m +k|^2  \ge C (|m| +1) (|k| + 2)^2, 
$$
and, for $|m| \ge 2 |k| + 2$, 
$$
|z+4(m+k)-(m+k)^3| + |m +k|^2 \ge C |m|^3 
$$
(by considering  $|k| \ge 10$ and $|k|  <  10$). 
We deduce that 
\begin{multline}\label{lem-Pre1-p2}
 \sum_{|m| \le 2 |k|  + 2, m + k \neq 0} \frac{|m+k|^j}{|z + 4 (m+k) - (m+k)^3| +
(m+k)^2} \\[6pt]
  \le C \sum_{|m| \le 2 |k| + 2}  \frac{1}{(|k| + 2)^{2-j} (|m|+1)} \le \frac{C
\ln (|k| + 2)}{(|k|+2)^{2-j}},
\end{multline}
and
\begin{equation}\label{lem-Pre1-p3}
 \sum_{|m| >  2 |k|  + 2} \frac{|m+k|^{j}}{|z + 4 (m+k) - (m+k)^3| + (m+k)^2}  \le
C \sum_{|m| > 2 |k| + 2} \frac{1}{|m|^{3-j}} \le \frac{C}{(|k|+2)^{2 -j}}.
\end{equation}
Combining \eqref{lem-Pre1-p1} - \eqref{lem-Pre1-p3}  yields \eqref{lem-Pre1-cl}.
\end{proof}

In what follows, for an appropriate function $\zeta$ defined in $\mR_+ \times (0,
L)$, we denote
\[
\hat{\hat \zeta }(z, n) = \frac{1}{L} \int_0^L \hat \zeta(z, x) e^{- \frac{i 2 \pi
n x}{L}} \diff x \mbox{ for } (z, n) \in \mR \times \mZ.
\]
Recall that to define $\hat \zeta(z, x)$, we extend $\zeta$ by $0$ for $t < 0$.

\begin{proof}[Proof of \Cref{lem-kdvB}] For simplicity of notations, we will assume that
$L =  2 \pi$. We
establish \eqref{lem-kdvB-cl1}, \eqref{lem-kdvB-cl1-*},   and \eqref{lem-kdvB-cl3} in
Steps 1, 2 and 3 below.

\step{Proof of  \eqref{lem-kdvB-cl1}}

We first estimate $\| y(\cdot, x) \|_{L^2 (\mR_+)}$ for $x \in [0, L]$.
From \eqref{sys-kdvB} and \eqref{bdry-kdvB},  we have
\begin{equation}\label{lem-kdvB-form-y1}
\hhy(z, n) =  \frac{ \hhf(z, n)}{i (z + 4 n - n^3 ) + 3 n^2} \quad \mbox{ for } (z, n)
\in \mR \times  (\mZ \setminus \{0 \}),
\end{equation}
and
\begin{equation}\label{lem-kdvB-form-y2}
\hhy(z, 0) =   0 \quad \mbox{ for }  z \in \mR
\end{equation}
since $\dsp \int_0^L f(t, x) \diff x  = 0$ for $t>0$ by \eqref{lem-kdvB-f1} and
\eqref{lem-kdvB-f2}.
By Plancherel's theorem, we obtain
\begin{equation}\label{lem-kdvB-id1}
\int_{\mR_+} |y(t, x)|^2 \diff t  = \int_{\mR} |\hy(z, x)|^2 \diff z
\le C  \int_{\mR} \left|\sum_{n\neq 0} \frac{|\hhf(z, n)|}{|z + 4 n -
n^3| +  n^2}   \right|^2  \diff z.
\end{equation}
Since
\begin{equation}\label{lem-kdvB-f}
|\hhf(z, n)| \le C \| f\|_{L^1(\mR_+ \times (0, L))},
\end{equation}
it follows from \eqref{lem-kdvB-id1} that
\begin{equation}\label{lem-kdvB-id1-1}
\int_{\mR_+} |y(t, x)|^2 \diff t
\le C \| f\|_{L^1(\mR_+ \times (0, L))}^2  \int_{\mR} \left|\sum_{n\neq 0} \frac{1}{|z + 
4 n - n^3| +  n^2}  \right|^2  \diff z.
\end{equation}
Applying  \Cref{lem-Pre1}  with $j=0$, we derive from \eqref{lem-kdvB-id1-1} that
\begin{equation*}
\int_{\mR_+} |y(t, x)|^2 \diff t   \le C \| f\|_{L^1(\mR_+ \times (0, L))}^2 \int_{\mR}
\frac{\ln^2 (|z| + 2)}{ (|z| + 2)^{4/3}} \diff z,
\end{equation*}
which yields
\begin{equation}\label{lem-kdvB-cl1-1}
\|y (\cdot, x) \|_{L^2}  \le C \| f\|_{L^1(\mR_+ \times (0, L))}.
\end{equation}

We next estimate $\| y_x(\cdot, x) \|_{H^{-1/3}(\mR_+)}$ for $x \in [0, L]$. We have,
by  \eqref{lem-kdvB-form-y1},  \eqref{lem-kdvB-form-y2},  and \eqref{lem-kdvB-f},
\begin{multline}\label{lem-kdvB-y-2-1}
\|  y_x (\cdot, x) \|_{H^{-1/3}(\mR_+)}^2 \\[6pt]
\le   C \| f\|_{L^1(\mR_+ \times (0, L))}^2   \int_{\mR} \frac{1}{(1 + |z|^2)^{1/3}}
\left| \sum_{n\neq 0} \frac{ |n| }{ |z + 4 n - n^3|  + n^2}
\right|^2 \diff  z.
\end{multline}
Applying  \Cref{lem-Pre1} with $j=1$, we derive from \eqref{lem-kdvB-y-2-1} that
\begin{equation*}
\| y_x (\cdot, x) \|_{H^{-1/3}(\mR_+)}^2    \le C \| f\|_{L^1(\mR_+ \times (0, L))}^2
\int_{\mR}  \frac{\ln^2 (|z| + 2)}{ (|z| + 2)^{4/3}} \diff z,
\end{equation*}
which yields
\begin{equation}\label{lem-kdvB-cl2-1}
\|y_x (\cdot, x) \|_{H^{-1/3}(\mR)}  \le C \| f\|_{L^1(\mR_+ \times (0, L))}.
\end{equation}
Assertion~\eqref{lem-kdvB-cl1} now follows from \eqref{lem-kdvB-cl1-1} and
\eqref{lem-kdvB-cl2-1}.

\step{Proof of  \eqref{lem-kdvB-cl1-*}} By Step 1, without loss of
generality, one might assume that $f_1 = 0$.  The proof of  the inequality $\| y(\cdot, x)
\|_{H^{-1/3}} \le C \| f_2\|_{L^1(\mR_+ \times (0, L))}$ is similar to the one of
\eqref{lem-kdvB-cl2-1} and is omitted.

To prove
\begin{equation}\label{lem-kdvB-cl1-*-Step2}
\| y_x(\cdot, x) \|_{H^{-2/3}(\mR)} \le C \| f_2\|_{L^1(\mR_+ \times (0, L))},
\end{equation}
 we proceed as follows. For $z \in \mR$, it holds
\begin{equation}\label{lem-kdvB-cl1-*-p1}
\hy_x(z, x) =  - \frac{1}{L} \int_0^{L} \hf_2 (z, \xi)  \sum_{n\neq 0}
\frac{n^2 e^{ i n (x - \xi)}}{i (z + 4n -n^3) + 3 n^2} \diff  \xi.
\end{equation}
We have, for some large positive constant $c$,
\begin{equation*}
\left| \sum_{|n| \ge c(|z| + 1)}   \frac{n^2 e^{ i n (x - \xi)}}{i
(z + 4n -n^3) + 3 n^2} +  \sum_{|n| \ge c(|z| + 1)}
\frac{e^{ i n (x - \xi)}}{i n} \right|
 \le C  \sum_{|n| \ge c(|z| + 1)} \frac{1}{|n|^2}  \le
\frac{C}{|z| + 1},
\end{equation*}
\[
\left| \sum_{0<|n| \le c (|z| + 1)}  \frac{e^{ i n (x - \xi)}}{i n} \right| \le  C \ln 
(|z| + 2),
\]
and, as in  \eqref{lem-Pre1-p2}  in the proof of \Cref{lem-Pre1},
\[
\left| \sum_{0<|n| \le c (|z| + 1)}   \frac{n^2 e^{ i n (x - \xi)}}{i
(z + 4n -n^3) + 3 n^2} \right| \le  C \ln (|z| + 2).
\]
It follows that
\begin{equation}\label{lem-kdvB-cl1-*-p2}
\left| \sum_{n\neq 0}   \frac{n^2 e^{ i n (x - \xi)}}{i (z + 4n -n^3) + 3
n^2} +  \sum_{n\neq 0}  \frac{e^{ i n (x - \xi)}}{i n}
\right|
\le \frac{C}{|z| + 1} + C \ln (|z| + 2).
\end{equation}
Since
\[
\sum_{n\neq 0}  \frac{e^{ i n \xi'}}{i n} = - \xi' + \pi  \mbox{ for } 
\xi' \in (0, 2 \pi),
\]
and
\[
\| y_x(\cdot, x)\|_{H^{-2/3}(\mR)}^2 =  \int_{\mR} \frac{|\hy_x(z, x)|^2}{(1 +
|z|^2)^{2/3}} \diff z,
\]
assertion \eqref{lem-kdvB-cl1-*-Step2} follows from \eqref{lem-kdvB-cl1-*-p1} and
\eqref{lem-kdvB-cl1-*-p2}.

We next deal with
\[
\| y \|_{L^2(\mR_+; H^{-1} (0, L))}  \le C \| f_2 \|_{L^1(\mR_+ \times (0, L))}.
\]
Since
\[
\| y \|_{L^2(\mR_+; H^{-1} (0, L))}^2  \le C \int_{\mR} \sum_{n\neq 0} \left|
\frac{\hhf_2(z, n)}{|i (z + 4n - n^3)| + 3 n^2}\right|^2 \diff z,
\]
the  estimate follows from \Cref{lem-Pre1}. The proof of Step 2 is complete.

\step{Proof of \eqref{lem-kdvB-cl3}}

For simplicity of the presentation, we will assume that $f_1 = 0$. We have the following
representation for the solution:
\begin{equation}\label{lem-kdvB-S3}
y(t, x) =   \sum_{n\neq 0}  e^{ in x} \int_0^t e^{- \big(i (4 n - n^3) + 3
n^2 \big) (t- \tau) }   \left( \frac{in}{L} \int_0^L f_2(\tau, \xi) e^{- in \xi} \diff
\xi \right) \diff  \tau.
\end{equation}
Let $\mathds{1}_{A}$ denote the characteristic function of a set $A$ in $\mR$.
Assertion \eqref{lem-kdvB-cl3} then follows easily from \eqref{lem-kdvB-S3} by noting
that, for $t \ge T+ \delta$
\[
 \sum_{n\neq 0}  \int_0^t |n|^{10} e^{- 3 n^2 (t-\tau) }
\mathds{1}_{\big\{\tau <
T\big\}}  \diff \tau < C_\delta.
\]
The proof is complete.
\end{proof}

\subsection{On the linear KdV equations}\label{sect-LKdV}

In this section, we derive various results on the linear KdV equations using low
regularity data information.
These will be used in the proof of \Cref{thm-main}. We begin with

\begin{lemma}\label{lem-kdv1} Let $h = (h_1, h_2, h_3) \in H^{1/3} (\mR_+) \times
H^{1/3}(\mR_+) \times L^2 (\mR_+)$,  and
let $y \in C\big([0, + \infty); L^2(0, L) \big) \cap L^2_{\loc}\big([0, + \infty); H^1(0,
L) \big) $ be the unique solution of the system
\begin{equation}\label{sys-y-LKdV}\left\{
\begin{array}{cl}
y_t (t, x) + y_x (t, x) + y_{xxx} (t, x) = 0 &  \mbox{ in } (0, +\infty) \times  (0, L),
\\[6pt]
y(t, x=0) = h_1(t),  \;  y(t, x=L) = h_2(t), \;  y_x(t , x= L)   = h_3(t)& \mbox{ in } (0,
+\infty),
\end{array}\right.
\end{equation}
and
\begin{equation}\label{IC-y-LKdV}
y(t = 0, \cdot)  = 0 \mbox{ in } (0, L).
\end{equation}
We have, for $T>0$,
\begin{equation}\label{lem-kdv1-cl2}
\| y\|_{L^2((0, T) \times (0, L))} \le C_{T, L} \Big( \| (h_1, h_2) \|_{L^2(\mR_+)} + \|
h_3 \|_{H^{-1/3}(\mR)}\Big),
\end{equation}
and
\begin{equation}\label{lem-kdv1-cl2-*}
\| y\|_{L^2((0, T); H^{-1} (0, L))} \le C_{T, L} \Big( \| (h_1, h_2) \|_{H^{-1/3}(\mR)} +
\| h_3 \|_{H^{-2/3}(\mR)}\Big),
\end{equation}
for some positive constant $C_{T, L}$  independent of $h$.
\end{lemma}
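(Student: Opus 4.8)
The plan is to reduce the inhomogeneous boundary-value problem \eqref{sys-y-LKdV}--\eqref{IC-y-LKdV} to a problem with \emph{homogeneous} boundary conditions and an interior source, and then to transport the latter to the periodic linear KdV--Burgers equation so that the two conclusions of \Cref{lem-kdvB} apply directly: \eqref{lem-kdvB-cl1} will produce the strong bound \eqref{lem-kdv1-cl2}, while the divergence-form conclusion \eqref{lem-kdvB-cl1-*} will produce the weaker-norm bound \eqref{lem-kdv1-cl2-*}. First I would subtract a lift of the boundary data. Fix smooth profiles $a_1,a_2,a_3$ on $[0,L]$ with $a_1(0)=1$, $a_2(L)=1$, $a_3'(L)=1$ and all other relevant traces vanishing, and set $\Phi(t,x)=h_1(t)a_1(x)+h_2(t)a_2(x)+h_3(t)a_3(x)$, so that $\tilde y:=y-\Phi$ satisfies the homogeneous conditions $\tilde y(t,0)=\tilde y(t,L)=\tilde y_x(t,L)=0$ together with $\tilde y(0,\cdot)=0$ and $\tilde y_t+\tilde y_x+\tilde y_{xxx}=F$, where $F=-\Phi_t-\Phi_x-\Phi_{xxx}$. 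The terms $\Phi_x,\Phi_{xxx}$ are harmless: they are of the form $h_j(t)\times(\text{fixed profile})$ and can be arranged into the ``$f_1+f_{2,x}$'' structure of \Cref{lem-kdvB} after correcting their spatial means and boundary values; the delicate term is $\Phi_t=h_1'a_1+h_2'a_2+h_3'a_3$ (see below).

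Next I would pass to the KdV--Burgers setting through the exponential substitution $w=e^{x}\tilde y$, for which a direct computation gives $\tilde y_t+\tilde y_x+\tilde y_{xxx}=e^{-x}\bigl(w_t+4w_x+w_{xxx}-3w_{xx}-2w\bigr)$. Thus $w$ solves a linear KdV--Burgers equation with the dissipative sign as in \eqref{sys-kdvB}, a zeroth-order term $2w$ (absorbed on the finite interval $(0,T)$ by Gronwall's inequality, or equivalently by shifting the Fourier multiplier $i(z+4n-n^3)+3n^2$), a source $e^{x}F$, and again homogeneous boundary conditions $w(t,0)=w(t,L)=w_x(t,L)=0$. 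After a periodic extension, obtained by multiplying by a fixed spatial cutoff and extending by zero, the only new contributions are commutator sources supported in the interior and of the same $f_1+f_{2,x}$ type; this reduces the problem to exactly the periodic system \eqref{sys-kdvB}--\eqref{bdry-kdvB}. Applying \Cref{lem-kdvB} and undoing the bounded change of variables $w=e^{x}\tilde y$ and the lift $\tilde y=y-\Phi$ then yields \eqref{lem-kdv1-cl2} and \eqref{lem-kdv1-cl2-*}; the decays $(|z|+2)^{-(2-j)/3}$ furnished by \Cref{lem-Pre1} are precisely the $1/3$-derivative gains that separate the two inequalities, and the distinction between the Dirichlet data $(h_1,h_2)$ (placed into the $f_1$ part) and the Neumann datum $h_3$ (placed into the $f_{2,x}$ part, on which \Cref{lem-kdvB} gains an extra derivative) accounts for the fact that $h_3$ is allowed to be $1/3$ rougher than $(h_1,h_2)$ in each estimate.

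The main obstacle is matching the low regularity of the data: the lift forces the time derivative $\Phi_t$ into the source, whereas \Cref{lem-kdvB} accepts sources controlled only in $L^1$ in time (its bounds are uniform in the time-frequency variable $z$), so a bare factor of $z$ coming from $\widehat{\Phi_t}=iz\widehat\Phi$ cannot be absorbed. The resolution I would use is to put the lift in \emph{divergence form in time}. Since $\Phi_t$ is the time derivative of a fixed spatial profile, one may either integrate by parts in time in Duhamel's formula, trading $\partial_t$ against the spatial operator $\partial_x+\partial_x^3$ (and choosing a \emph{stationary} lift solving $\Phi_x+\Phi_{xxx}=0$ in $x$, which the operator then annihilates, leaving only boundary-layer traces), or, equivalently and more cleanly, run the whole argument by transposition: pairing $y$ with a test function $g$ and integrating by parts reduces the two estimates to trace estimates for the adjoint problem $-\phi_t-\phi_x-\phi_{xxx}=g$ with the homogeneous boundary conditions $\phi(t,0)=\phi(t,L)=\phi_x(t,0)=0$, in which no time derivative of the data ever appears; one then finds
\[
\int_0^T\!\!\int_0^L y\,g\,\diff x\,\diff t=\int_0^T\bigl(h_3\,\phi_x(t,L)-h_2\,\phi_{xx}(t,L)+h_1\,\phi_{xx}(t,0)\bigr)\,\diff t,
\]
so that everything is controlled by boundary traces of $\phi$, which are in turn estimated from \Cref{lem-kdvB} through the same exponential substitution applied to the adjoint equation.

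I expect the genuinely delicate bookkeeping to lie in two places. First, verifying that the commutator sources generated by the periodic extension, together with the zeroth-order term $2w$, do not degrade the sharp fractional gains of \Cref{lem-Pre1}; this is where the finite-time restriction to $(0,T)$ is used, and it is also consistent with the presence of the real resonances $z$ with $-iz\in\Sp(\mathcal A)$ (\Cref{lem-EU}, \Cref{rem-detQ-realroots}), which forbid a clean global-in-time estimate for the pure KdV problem on the interval. Second, keeping precise track of which datum enters through an additional spatial derivative, i.e.\ of the $f_1$ versus $f_{2,x}$ placement, since this is exactly what distinguishes \eqref{lem-kdv1-cl2} (via \eqref{lem-kdvB-cl1}) from \eqref{lem-kdv1-cl2-*} (via \eqref{lem-kdvB-cl1-*}) and what produces the $1/3$-regularity offset between the Dirichlet and Neumann data in both inequalities.
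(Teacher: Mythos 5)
Your plan is genuinely different from the paper's proof (which never invokes \Cref{lem-kdvB} for this lemma: it treats the three data separately, takes the Fourier transform in time, uses the explicit representation of \Cref{lem-form-sol}, corrects the datum by a function supported in $[T,3T]$ --- which does not change the solution on $(0,T)$ by causality --- chosen so that its Fourier transform vanishes at the real zeros of $\det Q\,\Xi$, and then concludes from the multiplier asymptotics of \Cref{th:lambda_asym} and Plancherel), but as written it has structural gaps. First, any branch of your argument that routes the boundary data into interior sources and then applies \Cref{lem-kdvB} cannot produce the stated right-hand sides: both conclusions \eqref{lem-kdvB-cl1} and \eqref{lem-kdvB-cl1-*} measure the \emph{input} in $L^1$-in-time norms, so the resulting bounds are in terms of $\|h_j\|_{L^1(0,T)}\lesssim\|h_j\|_{L^2}$, never in terms of the negative norms $\|h_3\|_{H^{-1/3}(\mR)}$, $\|(h_1,h_2)\|_{H^{-1/3}(\mR)}$, $\|h_3\|_{H^{-2/3}(\mR)}$, which can be far smaller for oscillatory data; in the paper the $1/3$ and $2/3$ gains come from the $(1+|z|)^{-1/3}$, resp.\ $(1+|z|)^{-2/3}$, decay of the boundary-to-interior transfer function in the time-frequency variable, not from any source estimate. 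Second, the reduction to the periodic system \eqref{sys-kdvB} by ``multiplying by a spatial cutoff and extending by zero'' is circular: the commutator of the cutoff with $\partial_x^3$ creates sources $-3\chi' w_{xx}-3\chi'' w_x-\chi''' w$ involving first and second derivatives of the \emph{unknown}, and placing these in the $f_1+f_{2,x}$, $L^1$-framework of \Cref{lem-kdvB} presupposes bounds of exactly the type being proved (this is why the paper, in the proof of \Cref{lem-kdv3}, periodically extends only the given source, never the solution). Third, your stationary lift solving $\Phi_x+\Phi_{xxx}=0$ with the three prescribed traces does not always exist: the relevant determinant is $1-\cos L$, which vanishes for $L\in 2\pi\N_*$, lengths covered by the lemma (and critical).

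Your transposition variant is the most promising branch --- the duality identity is correct, and working backward on a finite horizon genuinely sidesteps the real resonances --- but the key estimates it requires are not available from \Cref{lem-kdvB}, contrary to what you assert. To obtain \eqref{lem-kdv1-cl2} from the identity you need, for the adjoint problem with source $g\in L^2((0,T)\times(0,L))$ and homogeneous boundary conditions, the positive-regularity trace bounds
\begin{equation*}
\|\phi_x(\cdot,L)\|_{H^{1/3}(0,T)}+\|\phi_{xx}(\cdot,0)\|_{L^2(0,T)}+\|\phi_{xx}(\cdot,L)\|_{L^2(0,T)}\le C\|g\|_{L^2((0,T)\times(0,L))},
\end{equation*}
together with a $1/3$-shifted variant for \eqref{lem-kdv1-cl2-*}. \Cref{lem-kdvB} bounds only the zeroth- and first-order traces, in \emph{negative} norms ($L^2$, $H^{-1/3}$, $H^{-2/3}$), for $L^1$-in-time sources; it contains no second-derivative trace estimate and no positive Sobolev-in-time gain, and neither does anything else in the paper. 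The estimates you need are hidden-regularity inequalities with gain (sharp Kato smoothing for the inhomogeneous boundary-value problem, in the spirit of Bona--Sun--Zhang); they are plausible, but proving them is a task of essentially the same nature and difficulty as the lemma itself, so the proposal as it stands does not close.
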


Here and in what follows,  $H^{-1}(0, L)$ is the dual space of $H^1_0(0, L)$ with the
corresponding norm.

\begin{proof} By the linearity and the uniqueness of the system, it suffices to
consider the three cases $(h_1, h_2, h_3) = (0, 0, h_3)$, $(h_1, h_2, h_3)  = (h_1, 0,
0)$, and $(h_1, h_2, h_3)  = (0, h_2, 0)$ separately.

We first consider the case $(h_1, h_2, h_3)  = (0, 0, h_3)$. Making a truncation, without
loss of generality, one might assume that $h_3 =0$ for $t > 2 T$. This fact is assumed
from now on. Let $g_{3} \in C^1(\mR)$ be such that $\supp g_{3} \subset [T, 3 T]$, and if
$z$ is a real solution of the equation $\det
Q(z)  \Xi(z) = 0$  of order $m$ then $z$ is also a real solution of order $m$ of $\hat h_3(z) -
\hat g_{3}(z)$, and
\[
\|g_3 \|_{H^{-1/3}(\mR)} \le C_{T, L} \| h_3\|_{H^{-2/3}(\mR)}.
\]
The construction of $g_3$, inspired by the moment method, see e.g. \cite{TT07},  can be
done as follows.
Set $\eta(t) = e^{-1/ (t^2 - (T)^2)} \mathds{1}_{|t| < T}$ for $t \in \mR$. Assume that
$z_1$, \dots, $z_k$ are real, distinct solutions of the equation $ \det Q(z)  \Xi(z)  =0$,
and $m_1$, \dots, $m_k$ are the corresponding orders (the number of real solutions of the
equation $\det Q(z)  \Xi(z)   =0$ is finite by \Cref{lem-detQ} and in fact they are simple; nevertheless,
we ignore this point and present a proof without using this information). Set, for $z \in
\mC$,
\[
\zeta(z) = \sum_{i=1}^k   \left(  \hat \eta(z - z_i) \mathop{\prod_{j=1}^k}_{j \neq i} (z
- z_j)^{m_j} \Big( \sum_{l=0}^{m_i} c_{i, l}  (z - z_i)^{l} \Big) \right),
\]
where $c_{i, l} \in \mC$ is chosen such that
\[
\frac{d^{l}}{dz^{l}} \Big( e^{2 i T z}\zeta(z)  \Big)_{z = z_i}= \frac{d^{l}}{dz^{l}} \hat
h_3 (z_i) \mbox{ for } 0 \le l \le m_i, \; 1 \le i \le k.
\]
This can be done since $\hat \eta (0) \neq 0$. Since
\[
|\hat \eta(z)| \le C e^{T |\Im (z)|},
\]
and, by \cite[Lemma 4.3]{TT07},
\[
|\hat \eta(z)| \le C_1 e^{- C_2 |z|^{1/2}} \mbox{ for } z \in \mR,
\]
using Paley-Wiener's theorem, one can prove that $\zeta$ is the Fourier transform of a
function $\psi$ of class $C^1$; moreover,  $\psi$ has the support in $[-T, T]$. Set, for
$z \in \mC$,
\[
g_3(t) = \psi(t + 2T).
\]
Using the fact $\hat g_3 (z) = e^{ i 2 T z} \zeta (z)$, one can check that $\hat g_3 -
\hat h_3$ has solutions $z_1$, \dots, $z_k$ with the corresponding orders $m_1$, \dots,
$m_k$.  One can check that
\[
\| \psi \|_{C^1} \le C_{T, L} \sum_{i=1}^k \sum_{l=0}^{m_i} \left|\frac{d^{l}}{dz^{l}}
\hat h_3 (z_i)\right|,
\]
which yields
\[
\| \psi \|_{C^1} \le C_{T, L}  \| h_3 \|_{H^{-2/3}(\mR)}.
\]
The required properties  of $g_3$ follow.

By considering the solution corresponding to $h_3 - g_3$, without loss of generality, one
might assume that  if $z$ is a real solution of order $m$ of the equation $\det Q(z)  \Xi(z) = 0$  then
$z$ is also a real solution of order $m$ of $\hat h_3(z)$. This fact is assumed from now
on.

We now establish \eqref{lem-kdv1-cl2}.   We have, by \Cref{lem-form-sol},
\begin{equation}\label{lem-kdv1-y}
\hy(z, x) = \frac{\hh_3(z) }{\det Q}  \sum_{j=1}^3 \big(e^{\lambda_{j+2} L } -
e^{\lambda_{j+1} L } \big) e^{\lambda_j x} \mbox{ for a.e. } x \in (0, L).
\end{equation}
From the assumption of $h_3$, we have, for $z \in \mR$ and $|z| \le \gamma$,
\begin{equation}\label{lem-kdv-1-p1}
\left| \frac{\hat h_3(z)}{\det Q(z)}  \sum_{j=1}^3 \big(e^{\lambda_{j+2} L } -
e^{\lambda_{j+1} L } \big) e^{\lambda_j x} \right| \le C_{T, \gamma} \|
h_3\|_{H^{-2/3}(\mR)},
\end{equation}
and,
by  \Cref{th:lambda_asym}, for $z \in \mR$, $|z| \ge \gamma$ with sufficiently large
$\gamma$,
\begin{equation}\label{lem-kdv-1-p2}
\left| \frac{1}{\det Q}  \sum_{j=1}^3 \big(e^{\lambda_{j+2} L } - e^{\lambda_{j+1} L }
\big) e^{\lambda_j x} \right| \le \frac{C}{ (1 + |z|)^{1/3}}.
\end{equation}
Combining \eqref{lem-kdv-1-p1} and \eqref{lem-kdv-1-p2} yields
\[
\|\hy\|_{L^2\big( \mR \times (0, L) \big)} \le C_T \| h_3 \|_{H^{-1/3}(\mR)},
\]
which  is \eqref{lem-kdv1-cl2} when $(h_1, h_2, h_3) = (0, 0, h_3)$.

We next deal with \eqref{lem-kdv1-cl2-*}. The proof of \eqref{lem-kdv1-cl2-*} is similar
to the one of \eqref{lem-kdv1-cl2}. One just notes that, instead of \eqref{lem-kdv-1-p2},
it holds, for $z \in \mR$, $|z| \ge \gamma$ with sufficiently large  $\gamma$,
\begin{equation}
\left\| \frac{1}{\det Q}  \sum_{j=1}^3  \big(e^{\lambda_{j+2} L } - e^{\lambda_{j+1} L }
\big) e^{\lambda_j x} \right\|_{H^{-1}(0, L)} \le \frac{C}{ (1 + |z|)^{2/3}}.
\end{equation}
The details are omitted.

The proof in the case $(h_1, h_2, h_3) = (h_1, 0, 0)$ or in the case $(h_1, h_2, h_3) =
(0, h_2, 0)$ is similar. We only mention here that  the solution corresponding to the
triple $(h_1, 0, 0)$ is given by 
\[
\hy(z, x) = \frac{\hh_1(z) }{\det Q}  \sum_{j=1}^3 (\lambda_{j+2} - \lambda_{j+1})
e^{\lambda_j (x- L)}
\mbox{ for a.e. } x \in (0, L),
\]
and the solution corresponding to the triple $(0, h_2, 0)$ is given by 
\[
\hy(z, x) = \frac{\hh_2(z) }{\det Q}  \sum_{j=1}^3 (\lambda_{j+1} e^{\lambda_{j+1} L } -
\lambda_{j+2} e^{\lambda_{j+2} L }) e^{\lambda_j x}
\mbox{ for a.e. } x \in (0, L).
\]
The details are left to the reader.
\end{proof}

\begin{remark} The estimates in \Cref{lem-kdv1} are in the spirit of the
well-posedness results due to Bona et al. in \cite{Bona09} (see also \cite{Bona03}) but
quite different. The setting of \Cref{lem-kdv1} is below the  limiting case in
\cite{Bona09},  which   was not investigated in their work.
\end{remark}

We next establish a variant of \Cref{lem-kdv1} for inhomogeneous KdV systems.

\begin{lemma}\label{lem-kdv3} Let $L > 0$ and $T>0$.  Let  $h = (h_1, h_2, h_3) \in
H^{1/3}(\mR_+) \times H^{1/3}(\mR_+) \times L^2(\mR_+)$, $f_1 \in L^1 \big((0, T) \times
(0, L) \big)$, and $f_2 \in L^1 \big((0, T); W^{1,1}(0, L) \big)$ with
\begin{equation}\label{lem-kdv3-cond-f2}
f_2 (t, 0) = f_2 (t, L) = 0.
\end{equation}
Set $f = f_1 + f_{2, x}$ and assume that $f \in L^1(\mR_+; L^2(0, L))$. Let  $y \in
C\big([0, + \infty); L^2(0, L) \big) \cap L^2_{\loc}\big([0, + \infty); H^1(0, L) \big) $
be the unique solution of  the system
\begin{equation}\label{lem-kdv3-S}\left\{
\begin{array}{cl}
y_t (t, x) + y_x (t, x) + y_{xxx} (t, x) = f(t, x) &  \mbox{ in } (0, +\infty) \times (0,
L), \\[6pt]
y(t, x=0) = h_1(t), \;  y(t, x=L) = h_2 (t), \;  y_x(t , x= L) = h_3(t) & \mbox{ in } (0,
+\infty),
\end{array}\right.
\end{equation}
and
\begin{equation*}
y(t = 0, \cdot)  = 0 \mbox{ in } (0, L).
\end{equation*}
We have
\begin{equation}\label{lem-kdv3-cl1}
\| y \|_{L^2 \big( (0, T) \times (0, L) \big)} \le C_T \Big( \| (h_1, h_2) \|_{L^2(\mR_+)}
+ \|h_3 \|_{H^{-1/3}(\mR)} + \| f\|_{L^1(\mR_+ \times (0, L))} \Big),
\end{equation}
and
\begin{equation}\label{lem-kdv3-cl1-*}
\| y \|_{L^2 \big( (0, T); H^{-1} (0, L) \big)} \le C_T \Big( \| (h_1, h_2)
\|_{H^{-1/3}(\mR)} + \|h_3 \|_{H^{-2/3}(\mR)} + \| (f_1, f_2)\|_{L^1(\mR_+ \times (0, L))}
\Big).
\end{equation}
Assume in addition that $h(t, \cdot) = 0$ and $f(t, \cdot) = 0$ for $t \ge T_1$ for some
$0 < T_1 < T$. Then,  for any $\delta > 0$ and for $T_1+ \delta \le t \le T$,  we have
\begin{equation}\label{lem-kdv3-cl2}
|y_t (t, x)| + |y_x (t, x)| \le C_{T, T_1, \delta} \Big( \| (h_1, h_2) \|_{H^{-1/3}(\mR)}
+ \|h_3 \|_{H^{-2/3}(\mR)} + \| (f_1, f_2) \|_{L^1(\mR_+ \times (0, L))} \Big).
\end{equation}
Here $C_T$ and $C_{T, T_1, \delta}$ denote  positive constants independent of $h$ and $f$.
\end{lemma}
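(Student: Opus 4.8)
The plan is to reduce \Cref{lem-kdv3} to the two estimates already established: \Cref{lem-kdv1} for the homogeneous KdV system with boundary data, and \Cref{lem-kdvB} for the periodic KdV--Burgers system with a source. By linearity and uniqueness I first split $y = y^{b} + y^{s}$, where $y^{b}$ solves \eqref{sys-y-LKdV}--\eqref{IC-y-LKdV} with the boundary data $h=(h_1,h_2,h_3)$ and no source, and $y^{s}$ solves \eqref{lem-kdv3-S} with source $f$ and homogeneous boundary data $h=0$. The term $y^{b}$ is controlled directly by \Cref{lem-kdv1}, which produces exactly the $\|(h_1,h_2)\|$ and $\|h_3\|$ contributions on the right-hand sides of \eqref{lem-kdv3-cl1} and \eqref{lem-kdv3-cl1-*}. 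Thus the real work is to bound the source part $y^{s}$ in the same norms, and to prove the pointwise estimate \eqref{lem-kdv3-cl2} for both pieces.

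The device for $y^{s}$ is the exponential change of unknown that converts the linear KdV operator into the linear KdV--Burgers operator of \eqref{sys-kdvB}. A direct computation shows that if $v$ solves $v_t+v_x+v_{xxx}=g$ on $(0,L)$, then $w:=e^{-2t+x}v$ solves
\begin{equation*}
w_t+4w_x+w_{xxx}-3w_{xx}=e^{-2t+x}g,
\end{equation*}
and, since $w(t,0)=e^{-2t}v(t,0)$, $w(t,L)=e^{-2t+L}v(t,L)$ and $w_x(t,L)=e^{-2t+L}\big(v+v_x\big)(t,L)$, the transformation preserves the conditions $w(t,0)=w(t,L)=w_x(t,L)=0$. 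Applying this with $v=y^{s}$ and $g=f$, I rewrite $e^{-2t+x}f=F_1+F_{2,x}$ with $F_1=e^{-2t+x}(f_1-f_2)$ and $F_2=e^{-2t+x}f_2$; then $F_2(t,0)=F_2(t,L)=0$ by \eqref{lem-kdv3-cond-f2}, so $F_2$ has the form required in \Cref{lem-kdvB}, and after subtracting from $F_1$ its (spatially constant) average the remainder has zero spatial mean, as \Cref{lem-kdvB} demands.

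I then introduce the periodic KdV--Burgers solution $W$ driven by this mean-corrected source; \Cref{lem-kdvB} applies and yields \eqref{lem-kdvB-cl1} and \eqref{lem-kdvB-cl1-*}, in particular the trace bounds for $W(\cdot,x)$ and $W_x(\cdot,x)$ at $x=0,L$. The difference $w-W$ solves the homogeneous KdV--Burgers equation on $(0,L)$ with boundary data read off from those traces of $W$; transforming back by $e^{2t-x}$ turns $w-W$ into a solution of the homogeneous KdV system with boundary data controlled by the same traces, which I estimate by feeding the trace bounds of \Cref{lem-kdvB} into \Cref{lem-kdv1}. Writing $y^{s}=e^{2t-x}W+e^{2t-x}(w-W)$ and noting that $e^{\pm(2t-x)}$ is a smooth nonvanishing multiplier on $[0,T]\times[0,L]$ (so it only alters the constants $C_T$, and the norm $\|(f_1,f_2)\|_{L^1}$ transfers to $\|(F_1,F_2)\|_{L^1}$), I obtain \eqref{lem-kdv3-cl1} and \eqref{lem-kdv3-cl1-*}; the constant piece removed from $F_1$ excites only the zeroth Fourier mode, is integrated explicitly, and contributes a harmless lower-order term. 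For \eqref{lem-kdv3-cl2}, once $h$ and $f$ vanish for $t\ge T_1$ the source of the periodic problem turns off, so \eqref{lem-kdvB-cl3} controls $W_t,W_x$ for $t\ge T_1+\delta$; the correction piece and $y^{b}$ then solve free KdV and become smooth, the required pointwise control coming from the resolvent representation of \Cref{lem-form-sol} by contour deformation, as in Step~3 of the proof of \Cref{lem-kdvB}.

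The main obstacle I expect is precisely the mismatch between the Dirichlet/Neumann boundary conditions of the KdV problem and the periodic setting in which the smoothing estimate \Cref{lem-kdvB} is available: the bookkeeping that recasts $w-W$ as a homogeneous KdV boundary-value problem, together with the zero-mean correction needed to legitimately invoke \Cref{lem-kdvB}, is where the care lies. Everything else is the routine propagation of the $L^1$, $H^{-1/3}$ and $H^{-2/3}$ norms through \Cref{lem-kdv1,lem-kdvB} and the bounded multiplier $e^{2t-x}$.
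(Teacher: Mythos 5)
Your treatment of \eqref{lem-kdv3-cl1} and \eqref{lem-kdv3-cl1-*} is essentially the paper's own proof: the same change of unknown $e^{-2t+x}$, the same splitting of the transformed source into a zero-mean part (sent to the periodic \Cref{lem-kdvB}) plus its spatial mean, and the same correction of boundary traces fed into \Cref{lem-kdv1}; peeling off the boundary part $y^b$ first, rather than keeping $h$ inside the correction system as the paper does, is an immaterial reorganization. One detail you pass over: the boundary data of the transformed-back correction carry the factor $e^{2t}$, and the integrated mean $\int_0^t\psi(s)\diff s$ tends to a nonzero constant, so these data are \emph{not} in $L^2(\mR_+)$ and cannot be inserted into \Cref{lem-kdv1} as written; one must first truncate them in time (harmless by causality, since only $t\in[0,T]$ matters), which is exactly the role of the cutoff $\varphi$ in the paper's proof. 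That is routine.

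The genuine gap is in your argument for \eqref{lem-kdv3-cl2}. You assert that once $h$ and $f$ vanish for $t\ge T_1$, ``the correction piece and $y^b$ then solve free KdV''. This is false for the correction piece: its boundary data are the traces of the periodic solution $W$ (and of the integrated mean), and $W$ does \emph{not} vanish for $t\ge T_1$ --- it keeps evolving under the KdV--Burgers flow --- so that piece is driven by nonzero boundary data for all time. Consequently neither a Paley--Wiener/contour-deformation argument (which needs data compactly supported in time) nor ``free evolution plus smoothing'' applies to it; moreover, Step~3 of the proof of \Cref{lem-kdvB} is a periodic Fourier-series computation with no direct analogue for the boundary-value resolvent of \Cref{lem-form-sol}, whose poles (some lying on the real axis when $L\in\cN$) obstruct deforming the contour. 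The paper circumvents all of this by not estimating the pieces separately after $T_1$: the \emph{full} solution $y$ does solve the free system for $t\ge T_1$, so one extracts from the already-proved $L^2$-in-time estimates an $H^{-1}(0,L)$ bound on $y(\tau,\cdot)$ at some intermediate time $\tau\in[T_1+\delta/2,\,T_1+3\delta/4]$ (a Chebyshev-type argument, cf.~\eqref{lem-kdv3-coucou}), and then invokes the smoothing property of the free linear KdV system to reach \eqref{lem-kdv3-cl2}. Your proof of \eqref{lem-kdv3-cl2} should be replaced by an argument of this type.
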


\begin{proof}  The proof is based on a connection between  the  KdV equations and the
KdV-Burgers equations.  Set $v(t, x) = e^{-2 t + x} y (t, x)$, which is equivalent to
$y(t, x)  = e^{2t - x} v(t, x)$. Then
\[
y_t (t, x) = \big(2 v (t, x) + v_t (t, x) \big) e^{2 t - x}, \quad y_x (t, x) = \big(-
v(t, x) + v_x (t, x) \big) e^{2t -x},
\]
\[
y_{xxx} (t, x) = \big(v_{xxx} (t, x) - 3 v_{xx} (t, x) + 3 v_x (t, x) - v(t, x) \big) e^{2
t -x}.
\]
Hence, if $y$ satisfies the equation
\[
y_t (t, x) + y_x (t, x) + y_{xxx} (t, x) = f (t, x) \mbox{ in } \mR_+ \times  (0, L),
\]
then it holds
\[
v_t (t, x) + 4 v_x (t, x)  + v_{xxx} (t, x) - 3 v_{xx} (t, x) = f(t, x) e^{-2 t + x}
\mbox{ in } \mR_+ \times  (0, L).
\]

Set, in $\mR_+ \times (0, L)$,
\begin{equation}\label{lem-kdv3-def-hg}
\psi(t, x) =  \psi(t) : =  \frac{1}{L} \int_0^L f(t, \xi) e^{-2 t + \xi} \diff  \xi \quad
\mbox{ and } \quad
g(t, x) : = f(t, x) e^{-2 t + x} - \psi(t, x).
\end{equation}
Then
\[
\int_0^L g(t, x) \diff x = 0.
\]
Let $y_1 \in  C\big([0, + \infty); L^2(0, L) \big) \cap L^2_{\loc}\big([0, + \infty);
H^1(0, L) \big)$ be the unique solution which is periodic in space  of the system
\begin{equation}
y_{1, t} (t, x) + 4 y_{1, x} (t, x) + y_{1, xxx} (t, x) - 3 y_{1, xx} (t, x) = g(t, x)
\mbox{ in } (0, +\infty) \times (0, L),
\end{equation}
and
\begin{equation}
y_1(t = 0, \cdot)  = 0 \mbox{ in } (0, L).
\end{equation}
We have, by \eqref{lem-kdv3-cond-f2},
\begin{equation}\label{lem-kdv3-g}
g(t, x) = f_1(t, x) e^{-2t + x} + f_{2, x} (t, x) e^{-2t + x}  - \psi(t, x),
\end{equation}
and
\begin{equation}\label{lem-kdv3-psi}
\psi(t, x) =  \frac{1}{L}\int_0^L f_1(t, \xi) e^{-2 t + \xi} \diff  \xi - \frac{1}{L}
\int_0^L f_2(t, \xi) e^{-2 t + \xi} \diff  \xi.
\end{equation}

Applying \Cref{lem-kdvB}, we have
\begin{equation*}
\| y_{1}(\cdot, x)\|_{L^2(\mR_+)} + \| y_{1, x} (\cdot, x) \|_{H^{-1/3}(\mR)} \le C \|g
\|_{L^1(\mR_+ \times (0, L))}
\end{equation*}
which yields, by \eqref{lem-kdv3-def-hg},
\begin{equation}\label{lem-kdv3-y1}
\| y_{1}(\cdot, x)\|_{L^2(\mR_+)} + \| y_{1, x} (\cdot, x) \|_{H^{-1/3}(\mR)} \le C \| f
\|_{L^1(\mR_+ \times (0, L))}.
\end{equation}
Similarly, by noting $f_{2, x} (t, x) e^{-2t + x} = \big(f_2(t, x) e^{-2t + x} \big)_x -
f_{2} (t, x) e^{-2t + x} $,  we get
\begin{equation}\label{lem-kdv3-y1-*}
\| y_{1}(\cdot, x)\|_{H^{-1/3}(\mR)} + \| y_{1, x} (\cdot, x) \|_{H^{-2/3}(\mR)} \le C \|
(f_1, f_2) \|_{L^1(\mR_+ \times (0, L))}.
\end{equation}

Applying  \Cref{lem-kdvB} again,  we obtain
\begin{equation}\label{lem-kdv3-y1-2}
| y_{1, x}(t, x)| + | y_{1, t} (t, x) | \le C_{T, T_1, \delta}  \| (f_1, f_2)
\|_{L^1(\mR_+ \times (0, L))} \mbox{ for } T_1 + \delta/2 \le t \le T.
\end{equation}
if $f = 0$ for $t \ge T_1$.

Fix $\varphi \in C(\mR)$ such that $\varphi = 1$ for  $|t| \le T$ and $\varphi = 0$ for
$|t| > 2T$.  Let $y_2 \in C\big([0, + \infty); L^2(0, L) \big) \cap L^2_{\loc}\big([0, +
\infty); H^1(0, L) \big)$ be the unique solution of the system
\begin{equation*}\left\{
\begin{array}{cl}
y_{2, t} (t, x) + y_{2, x} (t, x) + y_{2, xxx} (t, x) =  \varphi(t) \psi(t, x) &  \mbox{
in } (0, +\infty) \times (0, L), \\[6pt]
y_2(t, x=0) = h_1(t)  - \varphi (t) e^{2t} y_1(t, 0)  & \mbox{ in } (0, +\infty), \\[6pt]
y_2(t, x=L) = h_2 (t) - \varphi (t)  e^{2t - L} y_1(t, L) & \mbox{ in } (0, +\infty),
\\[6pt]
y_{2, x}(t , x= L) = h_3(t) - \varphi (t) \big(e^{2t - \cdot} y_1(t, \cdot) \big)_x (t, L)
& \mbox{ in } (0, +\infty),
\end{array}\right.
\end{equation*}
and
\begin{equation*}
y_2(t = 0, \cdot)  = 0 \mbox{ in } (0, L).
\end{equation*}
Using \eqref{lem-kdv3-g}  and applying \Cref{lem-kdv1} to $y_2$, from \eqref{lem-kdv3-y1},
we have
\begin{equation}\label{lem-kdv3-y2}
\| y_2\|_{L^2\big((0, T) \times (0, L) \big)} \le C_T  \Big( \| (h_1, h_2) \|_{L^2(\mR_+)}
+ \|h_3 \|_{H^{-1/3}(\mR)} + \| f\|_{L^1(\mR_+ \times (0, L))} \Big),
\end{equation}
and from \eqref{lem-kdv3-y1-*}, we obtain
\begin{equation}\label{lem-kdv3-y2-*}
\| y_2\|_{L^2\big((0, T); H^{-1} (0, L) \big)} \le C_T  \Big( \| (h_1, h_2)
\|_{H^{-1/3}(\mR)} + \|h_3 \|_{H^{-2/3}(\mR)} + \| (f_1, f_2)\|_{L^1(\mR_+ \times (0, L))}
\Big).
\end{equation}

One can verify that $y_1 + y_2$ and $y$ satisfy the same system for $0 \le t \le T$ and
they are in the space $C([0, T]; L^2(0, L)) \cap L^2(0, T; H^1(0, L))$. By the
well-posedness of the KdV system, one has
\[
y = y_1 + y_2  \mbox{ in } (0, T) \times (0, L).
\]
Combining \eqref{lem-kdv3-y1} and \eqref{lem-kdv3-y2} yields \eqref{lem-kdv3-cl1}, and
combining \eqref{lem-kdv3-y1-*} and \eqref{lem-kdv3-y2-*} yields \eqref{lem-kdv3-cl1-*}.
Combining \eqref{lem-kdv3-y1-2} and \eqref{lem-kdv3-y2} yields, for some $T_1 +\delta/2
\le \tau \le T_1 + 3 \delta/4 $,
\begin{equation}\label{lem-kdv3-coucou}
\| y(\tau, \cdot)\|_{H^{-1}(0, L)} \le C_{T, T_1, \delta}  \Big( \| (h_1, h_2)
\|_{H^{-1/3}(\mR)} + \|h_3 \|_{H^{-2/3}(\mR)} + \| (f_1, f_2)\|_{L^1(\mR_+ \times (0, L))}
\Big),
\end{equation}
and assertion \eqref{lem-kdv3-cl2} follows by the standard $C^\infty$ smoothness property
of solutions of the linear KdV system \eqref{lem-kdv3-S}. The proof is complete.
\end{proof}

\begin{remark} One can check \eqref{lem-kdv3-coucou} by using a variant of
\eqref{lem-kdvB-cl3} in  \Cref{lem-kdvB} in which $f=0$ however,  a non-zero initial
condition is considered.
\end{remark}

\section{Small time local null-controllability  of the KdV system}\label{sect-NL-KdV}

The main result of this section is the following, which implies in particular
\Cref{thm-main}.

\begin{theorem}\label{thm-NL} Let $L > 0$, and  $k, l \in \N$. Set
\begin{equation}\label{def-p}
p = \frac{(2k + l)(k-l)(2 l + k)}{3 \sqrt{3}(k^2 + kl + l^2)^{3/2}}.
\end{equation}
Assume that
\begin{equation}\label{def-L}
L = 2 \pi \sqrt{\frac{k^2 + k l + l^2}{3}},
\end{equation}
and
\begin{equation}\label{cond-kl}
2 k + l \not \in 3 \N.
\end{equation}
Let $\Psi$ be defined in \eqref{def-Psi}, where
\begin{equation}\label{eta-kdv}
\eta_1 = - \frac{2 \pi i}{3 L} (2k + l), \quad \eta_2 = \eta_1 + \frac{2\pi i}{L} k, \quad
\eta_3 = \eta_2 + \frac{2\pi i}{L} l,
\end{equation}
and $E$ is given by \eqref{def-D}.  There exists $\eps_0 > 0$ such that for all $0< \eps <
\eps_0$,  for all  $0 < T < T_*/2$ \footnote{$T_*$ is the constant in \Cref{corollary-dir}
with $p$, $\eta_j$, and $L$ given previously. Note that $E \neq 0$ by \Cref{lem-E} below.}
and for all  solutions $y \in C\big([0, + \infty); H^2(0, L)\big) \cap L^2_{\loc}\big([0,
+ \infty); H^3(0, L) \big) $  of
\begin{equation}\label{sys-y-O}\left\{
\begin{array}{cl}
y_t (t, x) + y_x (t, x) + y_{xxx} (t, x) + y y_x (t, x) = 0 &  \mbox{ in } (0, + \infty)
\times  (0, L), \\[6pt]
y(t, x=0) = y(t, x=L) = 0 & \mbox{ in }  (0, + \infty), \\[6pt]
y_x(t , x= L) = u(t) & \mbox{ in } (0, \infty),  \\[6pt]
y(0, \cdot) = y_0 (x) : = \eps \Psi(0, \cdot),
\end{array}\right.
\end{equation}
with $(u \in H^{2/3}(\mR_+)$,  $\| u \|_{H^{2/3}(\mR)} < \eps_0$, $u(0) = 0$,  and $\supp
u \subset [0, T])$, we have
\[
y(T, \cdot) \neq 0.
\]
\end{theorem}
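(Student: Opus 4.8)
The plan is to argue by contradiction: suppose $y(T,\cdot)=0$. Since $\supp u\subset[0,T]$, uniqueness for the nonlinear system then forces $y(t,\cdot)=0$ for all $t\ge T$. The first step is to use $\psi(t,x):=\varphi(x)e^{-ipt}$ as a multiplier. A direct computation with \eqref{eta-kdv}, \eqref{def-p} and $L=2\pi\sqrt{(k^2+kl+l^2)/3}$ shows that $\eta_1,\eta_2,\eta_3$ are exactly the three roots of $\lambda^3+\lambda-ip=0$ (their sum is $0$, the sum of their pairwise products is $1$, and their product is $ip$), so that $\psi_t+\psi_x+\psi_{xxx}=0$; moreover $\varphi(0)=\varphi(L)=\varphi'(0)=\varphi'(L)=0$ by telescoping and \eqref{pro-eta}. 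Multiplying \eqref{sys-y-O} by $\psi$, integrating over $(0,T)\times(0,L)$ and integrating by parts, every boundary term vanishes (because $y=0$ at $x=0,L$ and $\psi=\psi_x=0$ at the endpoints where $y$ is uncontrolled), the linear part reduces to $\int_0^L\big(y(T)\psi(T)-\eps\Psi(0)\varphi\big)\diff x$, and the nonlinearity yields $-\tfrac12\int_0^T\int_0^L y^2\psi_x$. Using $y(T)=0$, taking $\Re(\bar E\,\cdot)$ and the elementary identity $\Re\big(\bar E\int_0^L\Psi(0,x)\varphi\,\diff x\big)=\int_0^L\Psi(0,x)^2\diff x$, I get the exact relation
\[
\int_0^T\int_0^L y^2\,\Psi_x\,\diff x\,\diff t=-2\eps\,\|\Psi(0,\cdot)\|_{L^2(0,L)}^2 .
\]
Its right-hand side is strictly negative of order $\eps$, since $E\neq0$ by \Cref{lem-E} and $\Psi(0,\cdot)=E_1\Re\varphi+E_2\Im\varphi\not\equiv0$.

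Next I would produce an \emph{independent lower bound} contradicting this sign for small $\eps$. Write $y=\eps\Psi+v+r$, where $v$ is the solution of the linear KdV equation with control $u$ and zero initial/boundary data, and $r$ solves the linear KdV equation with source $-yy_x$ and zero data; note that $\eps\Psi$ is an exact free solution since $\Psi$ solves the homogeneous equation with $\Psi(t,0)=\Psi(t,L)=\Psi_x(t,L)=0$. Because $\int_0^L\Psi^2\Psi_x\,\diff x=\tfrac13[\Psi^3]_0^L=0$, expanding the square gives
\[
\int_0^T\int_0^L y^2\Psi_x=\int_0^T\int_0^L v^2\Psi_x+2\eps\int_0^T\int_0^L\Psi v\Psi_x+R,
\]
where $R$ collects the terms containing $r$. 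The estimates of \Cref{lem-kdv1} and \Cref{lem-kdv3} give $\|v\|_{L^2((0,T);H^{-1}(0,L))}\le C\|u\|_{H^{-2/3}(\mR)}$ and $\|r\|\le C\|yy_x\|\le C\|y\|^2$ in the relevant norms, with $\|y\|\lesssim\eps+\|u\|_{H^{-2/3}}\lesssim\eps_0$; since $\Psi$ and $\Psi_x$ are smooth, the cross term is $O(\eps\|u\|_{H^{-2/3}})$ and $|R|\le C\eps_0\big(\|u\|_{H^{-2/3}}^2+\eps^2\big)$, which will be absorbed below.

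The crux is the coercivity of $\int_0^T\int_0^L v^2\Psi_x$, and this is where I expect the main difficulty. The control $u$ does \emph{not} steer $v$ to $0$, so \Cref{corollary-dir} cannot be applied to $v$ directly. However, since $\M$ is unreachable from $0$ for the linearized system, $v(T)\in\M^\perp$; combined with $v(T)=-\eps\Psi(T)-r(T)$ and $\Psi(T)\in\M$ this forces $v(T)=-\mathrm{proj}_{\M^\perp}r(T)$, hence $\|v(T)\|\le\|r(T)\|\le C\|y\|^2$. As $v(T)$ lies in the closed controllable space $\M^\perp$ and is small, the controllability of the linearized KdV system onto $\M^\perp$ (Rosier, through the Hilbert Uniqueness Method) furnishes a correction control $u_c$ supported in $[T,2T]$, with $\|u_c\|_{H^{-2/3}}\le C\|v(T)\|\le C\|y\|^2$, that steers $v(T)$ to $0$. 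Then $\tilde u:=u+u_c$ has support in $[0,2T]\subset[0,T_*)$ and steers $0$ to $0$, its linear response $\tilde v$ coincides with $v$ on $[0,T]$ and vanishes for large $t$, so \Cref{corollary-dir} applies to $\tilde u$ and gives
\[
\int_0^T\int_0^L v^2\Psi_x\ge\int_0^\infty\int_0^L\tilde v^2\Psi_x-C\|y\|^4\ge C\|\tilde u\|_{H^{-2/3}}^2-C\|y\|^4\ge\tfrac{C}{2}\|u\|_{H^{-2/3}}^2-C'\|y\|^4 .
\]
It is precisely here that the small-time phenomenon (the sign of $E$ in \Cref{corollary-dir}, via \Cref{lem-E}) enters; the delicate points are the correct low-regularity norms and the quantitative steering of the $\M^\perp$-defect.

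Finally I would combine the two. Absorbing $R$ and $C'\|y\|^4$ into the coercive term for $\eps_0$ small, the previous two displays yield
\[
\tfrac{C}{2}\,\|u\|_{H^{-2/3}}^2-C_1\eps\,\|u\|_{H^{-2/3}}+2\eps\|\Psi(0,\cdot)\|_{L^2}^2-C_2\eps^2\le0 .
\]
Viewed as a quadratic in $a:=\|u\|_{H^{-2/3}}$, its constant term $\eps(2\|\Psi(0,\cdot)\|_{L^2}^2-C_2\eps)$ is positive and its discriminant $C_1^2\eps^2-2C\eps(2\|\Psi(0,\cdot)\|_{L^2}^2-C_2\eps)=-4C\|\Psi(0,\cdot)\|_{L^2}^2\,\eps+O(\eps^2)$ is negative for $\eps$ small; hence the left-hand side is strictly positive for every $a\ge0$, a contradiction. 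Choosing $\eps_0$ small enough that all absorptions are valid and the discriminant is negative completes the proof. The main obstacle throughout is the coercivity step, together with the bookkeeping of the $H^{-2/3}$, $H^{-1}$ and $L^2$ norms through the KdV/KdV--Burgers estimates.
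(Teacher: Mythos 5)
Your architecture coincides with the paper's own proof: argue by contradiction, derive the exact multiplier identity $\int_0^L y_0\Psi(0,\cdot)\diff x+\tfrac12\int_0^T\!\!\int_0^L y^2\Psi_x=0$, split the solution into the free evolution $\eps\Psi$, the linear control response $v$ and a quadratic remainder $r$ (your $v$, $r$, $u_c$, $\tilde u$ are the paper's $y_3$, $y_2$, $u_4$, $u+u_4$, up to the cosmetic difference that the paper removes the $\M$-component by projection instead of subtracting $\eps\Psi$), append a quadratically small correction control so that $u+u_c$ steers $0$ to $0$ within $[0,T_*]$, and invoke \Cref{corollary-dir} for the coercivity; your discriminant endgame is an equivalent repackaging of the paper's absorption argument. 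The multiplier computation, the conservation argument giving $P_{\M}v(T)=0$, and the construction of $u_c$ via Rosier's controllability of $\M^\perp$ are all correct.

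The genuine gap is in the two absorption claims, namely $|R|\le C\eps_0\big(\|u\|_{H^{-2/3}(\mR)}^2+\eps^2\big)$ and the absorption of $C'\|y\|^4$. These do not follow from the estimates you cite, and as stated they are false. Every estimate at the $H^1$ level --- $\|y\|_{L^2((0,T);H^1(0,L))}$, $\|r\|_{L^2((0,T);H^1(0,L))}$, $\|r(T)\|_{L^2(0,L)}$ and hence $\|u_c\|_{L^2}$ --- is controlled only by $C(\eps+\|u\|_{L^2})^2$-type quantities, not by $\eps+\|u\|_{H^{-2/3}(\mR)}$; your line ``$\|y\|\lesssim\eps+\|u\|_{H^{-2/3}}$'' conflates the two scales of norms. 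Consequently $R$ and the quartic error unavoidably contain terms of the form $\|u\|_{H^{-2/3}(\mR)}\|u\|_{L^2}^2$, $\|u\|_{H^{-1/3}(\mR)}^4$ and $\|u\|_{L^2}^4$ (these are exactly the error terms in \eqref{thm-NL-coucou} of the paper). The a priori bound $\|u\|_{L^2}\le\eps_0$ alone gives, e.g., $\|u\|_{H^{-2/3}(\mR)}\|u\|_{L^2}^2\le\eps_0^2\|u\|_{H^{-2/3}(\mR)}$, and for a highly oscillatory control with $\|u\|_{L^2}\sim\eps_0$ but $\|u\|_{H^{-2/3}(\mR)}\sim\eps\ll\eps_0$ this is $\eps_0^2\eps$, which dominates $C\eps_0(\|u\|_{H^{-2/3}(\mR)}^2+\eps^2)=O(\eps_0\eps^2)$; so the error cannot be absorbed into the coercive term and your final quadratic inequality is not contradictory.

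The missing idea --- and the reason the theorem imposes $u\in H^{2/3}$, $u(0)=0$, $\|u\|_{H^{2/3}}<\eps_0$, hypotheses your argument never actually uses --- is the interpolation $\|u\|_{L^2(\mR)}^2\le C\|u\|_{H^{-2/3}(\mR)}\|u\|_{H^{2/3}(\mR)}$ (and $\|u\|_{H^{-1/3}(\mR)}^2\le C\|u\|_{L^2(\mR)}\|u\|_{H^{-2/3}(\mR)}$) combined with the fractional Hardy/extension inequality $\|u\|_{H^{2/3}(\mR)}\le C\|u\|_{H^{2/3}(\mR_+)}$, which is valid precisely because $u(0)=0$. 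This yields $\|u\|_{L^2}^2\le C\eps_0\|u\|_{H^{-2/3}(\mR)}$, after which all the offending terms are bounded by $C\eps_0\|u\|_{H^{-2/3}(\mR)}^2$ and the absorption goes through; this is carried out in \eqref{thm-NL-interpolation-0}--\eqref{thm-NL-interpolation-2} of the paper. Note that without this step your scheme would, in effect, prove the obstruction for controls that are merely small in $L^2$, which is essentially the open problem the authors state they cannot solve; that alone signals that the bookkeeping you deferred is not routine but is the place where the regularity hypotheses on $u$ must enter.
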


\begin{remark} \label{rem-Psi} With the choices of $p$ and $L$ in \Cref{thm-NL}, the
function $\Psi(t, x)$ given in \Cref{corollary-dir} satisfies
the linear KdV system as noted in \cite{Cerpa07}, i.e.,
\begin{equation}\label{rem-Psi1}
\Psi_t (t,x) + \Psi_{xxx} (t,x) + \Psi_x (t,x) = 0 \mbox{ in } \mR_+ \times (0, L),
\end{equation}
and
\begin{equation}\label{rem-Psi2}
\Psi(t, 0) = \Psi(t, L) = \Psi_x(t, 0) = \Psi_x(t, L) = 0 \mbox{ in } \mR_+.
\end{equation}
This property can be rechecked using the fact $\eta_1, \, \eta_2, \eta_3$ are the roots of
$\eta^3 + \eta - i p =0$.
\end{remark}

We first show that  $E$ defined by \eqref{def-D} with $\eta_j$ given in \eqref{eta-kdv}
and with  $p$ in \eqref{def-p} is not 0 if \eqref{cond-kl} holds. More precisely, we have

\begin{lemma} \label{lem-E} Let $k, \, l \in \N$ and let $E$ be given by \eqref{def-D}
with $\eta_j$  in \eqref{eta-kdv} and with  $p$ in \eqref{def-p}. Assume that
\eqref{def-L} holds.
We have
\begin{equation*}
E =  \frac{40 \pi^3}{3 L^3}  ( e^{\eta_1 L} - 1 ) i  kl (k+l).
\end{equation*}
Consequently,
\[
E \neq 0 \mbox{ provided that  \eqref{cond-kl} holds. }
\]
\end{lemma}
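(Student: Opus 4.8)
The plan is to reduce \eqref{def-D} to an elementary symmetric-function computation in the three numbers $\eta_1,\eta_2,\eta_3$. First I would introduce the shorthand $\omega = 2\pi i/L$ and rewrite \eqref{eta-kdv} as $\eta_1 = -\tfrac{\omega}{3}(2k+l)$, $\eta_2 = \tfrac{\omega}{3}(k-l)$, $\eta_3 = \tfrac{\omega}{3}(k+2l)$. From this one reads off immediately that $\eta_1+\eta_2+\eta_3 = 0$, that the three pairwise differences are $\eta_1-\eta_2 = -\omega k$, $\eta_2-\eta_3 = -\omega l$, $\eta_3-\eta_1 = \omega(k+l)$, and---recalling from \Cref{rem-Psi} that the $\eta_j$ are exactly the roots of $\eta^3+\eta-ip=0$---that $\eta_1\eta_2\eta_3 = ip$. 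These last identities, together with $\sum_{i<j}\eta_i\eta_j = 1$, can also be checked directly from \eqref{def-L}, using $\omega^2 = -3/(k^2+kl+l^2)$ and $\omega^3 = -8\pi^3 i/L^3$; this is essentially the only place where the value \eqref{def-L} of $L$ enters.

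Next I would evaluate the two sums in \eqref{def-D} separately. For the first, $S_1 := \sum_{j=1}^3\eta_{j+2}^2(\eta_{j+1}-\eta_j)$ is the classical antisymmetric combination, and the Vandermonde-type identity $\eta_3^2(\eta_2-\eta_1)+\eta_1^2(\eta_3-\eta_2)+\eta_2^2(\eta_1-\eta_3) = (\eta_1-\eta_2)(\eta_2-\eta_3)(\eta_3-\eta_1)$ turns it into a product; substituting the differences gives $S_1 = \omega^3\,kl(k+l)$. For the second, $S_2 := \sum_{j=1}^3(\eta_{j+1}-\eta_j)/\eta_{j+2}$, I would put everything over the common denominator $\eta_1\eta_2\eta_3$; the key (and only mildly surprising) observation is that the resulting numerator $\eta_1\eta_2(\eta_2-\eta_1)+\eta_2\eta_3(\eta_3-\eta_2)+\eta_1\eta_3(\eta_1-\eta_3)$ collapses, monomial by monomial, to exactly $S_1$. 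Hence $S_2 = S_1/(\eta_1\eta_2\eta_3) = S_1/(ip)$.

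With these two facts the bracket in \eqref{def-D} simplifies cleanly: since $-ip\,S_2 = -S_1$, one gets $-\tfrac23 S_1 - ip\,S_2 = -\tfrac53 S_1 = -\tfrac53\,\omega^3\,kl(k+l)$, so that $E = \tfrac13(e^{\eta_1 L}-1)\bigl(-\tfrac53\,\omega^3\,kl(k+l)\bigr)$. Inserting $\omega^3 = -8\pi^3 i/L^3$ collects all constants and exhibits $E$ as a fixed nonzero numerical multiple of $(e^{\eta_1 L}-1)\,i\,kl(k+l)$, which is the closed form asserted in the statement. For the consequence it then remains to see when this product vanishes: $kl(k+l)\neq 0$ since $k,l\ge 1$, while $\eta_1 L = -\tfrac{2\pi i}{3}(2k+l)$ gives $e^{\eta_1 L}=1$ precisely when $2k+l\in 3\mZ$; thus $e^{\eta_1 L}\neq 1$ is equivalent to \eqref{cond-kl}, and $E\neq 0$ follows.

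I expect the only genuine subtlety to be the middle step---recognizing that the numerator of $S_2$ equals $S_1$---since it is exactly this coincidence that makes the two a priori unrelated sums combine into a single multiple of $S_1$ and produce the factorized answer. Everything else is routine bookkeeping, with \eqref{def-L} entering solely through the evaluation of $\omega^2$ and $\omega^3$.
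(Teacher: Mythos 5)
Your proposal is correct, and it reaches the result by a genuinely different route than the paper. The paper's proof is a direct evaluation: it normalizes $\gamma_j = L\eta_j/(2\pi i)$, computes $S_1=\sum_j \eta_{j+2}^2(\eta_{j+1}-\eta_j) = -8\pi^3 i\,kl(k+l)/L^3$ by regrouping differences of squares, computes $S_2=\sum_j (\eta_{j+1}-\eta_j)/\eta_{j+2} = -27\,kl(k+l)/\bigl[(k+2l)(2k+l)(k-l)\bigr]$ by clearing denominators in $k,l$, and only at the end recombines the two sums via the identity $p/\bigl[(k-l)(k+2l)(2l+k)\bigr] = \bigl(2\pi/(3L)\bigr)^3$. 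You instead exploit the algebraic structure: $S_1$ is the Vandermonde product $(\eta_1-\eta_2)(\eta_2-\eta_3)(\eta_3-\eta_1)=\omega^3 kl(k+l)$; the numerator of $S_2$ over the common denominator $\eta_1\eta_2\eta_3$ is again exactly $S_1$ (I checked this monomial identity, it holds); and Vieta's formula $\eta_1\eta_2\eta_3=ip$ (recorded in \Cref{rem-Psi} and verifiable from your values of $\omega^2$ and $\omega^3$) gives $-ip\,S_2=-S_1$, so the bracket in \eqref{def-D} collapses to $-\tfrac53 S_1$ with no partial-fraction bookkeeping. This is cleaner, it explains structurally why the two a priori unrelated sums combine, and it isolates the role of \eqref{def-L} and \eqref{def-p} in the single identity $\eta_1\eta_2\eta_3=ip$, whereas the paper needs both explicit rational expressions in $k,l$. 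One bookkeeping remark that applies to both arguments: carrying the outer factor $\tfrac13$ of \eqref{def-D} to the end yields $E=\tfrac{40\pi^3}{9L^3}(e^{\eta_1 L}-1)\,i\,kl(k+l)$, i.e.\ one third of the constant displayed in the lemma (the paper's final line drops the prefactor $\tfrac13$ of \eqref{compute-E}, and your phrase ``a fixed nonzero numerical multiple'' glosses over the same point); this is harmless, since all that is used later is $E\neq 0$, which your equivalence $e^{\eta_1 L}=1 \Leftrightarrow 2k+l\in 3\mZ$, together with $kl(k+l)\neq 0$, establishes correctly.
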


\begin{proof} With $\gamma_j = L \eta_j/ (2 \pi i)$, we have
\[
\gamma_1 = - \frac{2k + l}{3}, \quad \gamma_2 = \frac{k-l}{3}, \quad \gamma_3 = \frac{k+ 2
l}{3}.
\]
It follows that
\begin{align*}
\frac{L^3}{(2 \pi i)^3}\sum_{j=1}^3 \eta_{j+2}^2(\eta_{j+1}-\eta_{j})  =  &   \sum_{j=1}^3
\gamma_{j+2}^2(\gamma_{j+1}-\gamma_{j})
=   \gamma_3^2 k + \gamma_1^2 l  - \gamma_2^2 (k+l)
\\[6pt] = &(\gamma_3^2  - \gamma_2^2) k  - (\gamma_2^2 - \gamma_1^2)l =  (k+l) k l,
\end{align*}
which yields
\begin{equation*}
\sum_{j=1}^3 \eta_{j+2}^2(\eta_{j+1}-\eta_{j})  = - 8 \pi^3 i  k l (k+l) /L^3.
\end{equation*}
We also have
\begin{align*}
\sum_{j=1}^3  \frac{\eta_{j+1} - \eta_j}{\eta_{j+2}} =  &   \sum_{j=1}^3
\frac{\gamma_{j+1} - \gamma_j}{\gamma_{j+2}}
=  \frac{3k}{k + 2 l } - \frac{3 l }{2 k + l} - \frac{3 (k+l)}{k -l} = - \frac{2 7 k l
(k+l)}{(k+2l) (2 k + l) (k-l)}.
\end{align*}
We then have, by \eqref{def-D},
\begin{equation}\label{compute-E}
E = \frac{1}{3} (e^{\eta_1 L} - 1 )  \left( \frac{16 \pi^3 i }{3 L^3} kl (k+l) + \frac{2 7
i p kl (k+l)}{(k -l) (k+ 2l) (2l + k)} \right).
\end{equation}
From \eqref{def-p} and \eqref{def-L}, we have
\[
 \frac{p}{(k -l) (k+ 2l) (2l + k)} = \Big(\frac{2 \pi}{ 3 L} \Big)^3.
\]
We derive from \eqref{compute-E} that
\[
E = \frac{40 \pi^3}{3 L^3}  ( e^{\eta_1 L} - 1 ) i  kl (k+l).
\]
The proof is complete.
\end{proof}

Before giving the proof of \Cref{thm-NL}, we state and establish new estimates for the
nonlinear KdV system \eqref{intro-sys-KdV} and \eqref{intro-IC-KdV} which play a role in
the proof  of \Cref{thm-NL}.

\begin{lemma} \label{lem-kdvNL} Let $L > 0$ and $T>0$.  There exists a constant $\eps_0 >
0$  depending on $L$ and $T$ such that for $y_0 \in L^2(0, L)$ and for $u \in L^2(\mR_+)$
with
\[
\| y_0 \|_{L^2(0, L)} + \| u \|_{L^2(\mR_+)} \le \eps_0,
\]
then the unique solution $y \in C\big([0, + \infty); L^2(0, L) \big) \cap
L^2_{\loc}\big([0, + \infty); H^1(0, L) \big) $ of the system
\begin{equation*}\left\{
\begin{array}{cl}
y_t (t, x) + y_x (t, x) + y_{xxx} (t, x)  +  y (t, x) y_x (t, x)= 0 &  \mbox{ in } (0, + \infty)
\times  (0, L), \\[6pt]
y(t, x=0) = y(t, x=L) = 0 & \mbox{ in }  (0, + \infty), \\[6pt]
y_x(t , x= L) = u(t) & \mbox{ in } (0, \infty),
\end{array}\right.
\end{equation*}
with $y(0, \cdot) = y_0$,
satisfies
\begin{equation}\label{lem-kdvNL-p1}
\| y \|_{L^2\big((0, T) \times (0, L) \big)} \le C \Big( \| y_0 \|_{L^2(0, L)} + \| u
\|_{H^{-1/3}(\mR)} \Big),
\end{equation}
and
\begin{equation}\label{lem-kdvNL-p2}
\| y \|_{L^2\big((0, T); H^{-1} (0, L) \big)} \le C \Big( \| y_0 \|_{L^2(0, L)} + \| u
\|_{H^{-2/3}(\mR)} \Big),
\end{equation}
where $C$ is a positive constant depending only on  $T$ and  $L$.
\end{lemma}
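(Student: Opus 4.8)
The plan is to view the quadratic term $y y_x$ as a source for the linear \emph{inhomogeneous} KdV equation and to invoke \Cref{lem-kdv3}, closing the resulting inequalities by the smallness hypothesis together with the standard energy (Kato smoothing) estimate. Since $y y_x = \tfrac12 (y^2)_x$ and $y(t,0)=y(t,L)=0$, the natural decomposition of the source is $f_1=0$, $f_2=-\tfrac12 y^2$, for which indeed $f_2(t,0)=f_2(t,L)=0$, as required by \Cref{lem-kdv3}; moreover $y y_x\in L^1\big((0,T);L^2(0,L)\big)$ because $\|y y_x(t)\|_{L^2_x}\le C\|y(t)\|_{H^1_x}^2$ and $y\in L^2\big((0,T);H^1(0,L)\big)$. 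To handle the non-zero initial datum (which \Cref{lem-kdv3} does not see), I would first let $w$ be the solution of the homogeneous linear KdV equation with zero boundary data and $w(0,\cdot)=y_0$, so that $\|w\|_{C([0,T];L^2)\cap L^2((0,T);H^1)}\le C\|y_0\|_{L^2}$, and set $v=y-w$; then $v$ solves \eqref{lem-kdv3-S} with zero initial datum, the same control $h_3=u$ (because $w_x(t,L)=0$), and source $f=-y y_x$. Throughout, by causality it suffices to work with $f$ restricted to $(0,T)$, all space--time norms being taken over $(0,T)\times(0,L)$.

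First I establish \eqref{lem-kdvNL-p1}. The energy method (multiplying the equation by $y$ and by a spatial weight, using $\int_0^L y^2 y_x\,dx=\tfrac13[y^3]_0^L=0$ and the smallness of the data to absorb the cubic term $\int_0^L y^3\,dx$) gives the a priori bound $\|y\|_{L^\infty((0,T);L^2)}+\|y\|_{L^2((0,T);H^1)}\le C(\|y_0\|_{L^2}+\|u\|_{L^2(\mR)})\le C\eps_0$. Applying \eqref{lem-kdv3-cl1} to $v$ with $f=-y y_x$ and adding the contribution $\|w\|_{L^2((0,T)\times(0,L))}\le C\|y_0\|_{L^2}$ yields
\begin{equation*}
\|y\|_{L^2((0,T)\times(0,L))}\le C\big(\|y_0\|_{L^2}+\|u\|_{H^{-1/3}(\mR)}+\|y y_x\|_{L^1((0,T)\times(0,L))}\big).
\end{equation*}
Since $\|y y_x\|_{L^1}\le \|y\|_{L^2((0,T)\times(0,L))}\|y_x\|_{L^2((0,T)\times(0,L))}\le C\eps_0\|y\|_{L^2((0,T)\times(0,L))}$ by the energy bound, choosing $\eps_0$ so small that $C\eps_0\le\tfrac12$ lets me absorb the last term into the left-hand side and obtain \eqref{lem-kdvNL-p1}.

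For \eqref{lem-kdvNL-p2} I would instead keep the source in divergence form, applying \eqref{lem-kdv3-cl1-*} to $v$ with $f_1=0$, $f_2=-\tfrac12 y^2$ (again adding $\|w\|_{L^2((0,T);H^{-1})}\le C\|y_0\|_{L^2}$), which gives
\begin{equation*}
\|y\|_{L^2((0,T);H^{-1}(0,L))}\le C\big(\|y_0\|_{L^2}+\|u\|_{H^{-2/3}(\mR)}+\tfrac12\|y\|_{L^2((0,T)\times(0,L))}^2\big),
\end{equation*}
because $\|f_2\|_{L^1}=\tfrac12\|y^2\|_{L^1}=\tfrac12\|y\|_{L^2((0,T)\times(0,L))}^2$. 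The crucial point is to convert this quadratic term, which by \eqref{lem-kdvNL-p1} is bounded by $C(\|y_0\|_{L^2}+\|u\|_{H^{-1/3}(\mR)})^2$, into the \emph{weaker} norm $\|u\|_{H^{-2/3}(\mR)}$. This is achieved by the interpolation inequality $\|u\|_{H^{-1/3}(\mR)}^2\le\|u\|_{L^2(\mR)}\,\|u\|_{H^{-2/3}(\mR)}$ (Plancherel and Cauchy--Schwarz, since $-\tfrac13=\tfrac12\cdot 0+\tfrac12\cdot(-\tfrac23)$), together with $\|u\|_{L^2}\le\eps_0$ and $\|y_0\|_{L^2}^2\le\eps_0\|y_0\|_{L^2}$. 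This yields $\|y\|_{L^2((0,T)\times(0,L))}^2\le C\eps_0\big(\|y_0\|_{L^2}+\|u\|_{H^{-2/3}(\mR)}\big)$, and inserting it above proves \eqref{lem-kdvNL-p2}.

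I expect the main obstacle to be precisely this last conversion in \eqref{lem-kdvNL-p2}: a naive bound on the nonlinear term only controls it by $\|u\|_{H^{-1/3}}$, which is strictly stronger than the target $\|u\|_{H^{-2/3}}$, so the argument genuinely needs the interpolation identity combined with the smallness $\|u\|_{L^2}\le\eps_0$ to recover the right-hand side. A secondary technical point is the rigorous justification of the a priori energy estimate and of the splitting $y=w+v$ directly for the stated low-regularity solution; this I would handle by first arguing for smooth solutions and then passing to the limit, relying on the well-posedness and uniqueness already incorporated into the statement of the lemma.
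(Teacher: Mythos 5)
Your proposal is correct, and its overall skeleton (treat $yy_x$ as a source, use the small-data Kato/energy bound $\|y\|_{L^2((0,T);H^1)}\le C\eps_0$, apply \Cref{lem-kdv3}, and close by smallness) is the same as the paper's; in particular your proof of \eqref{lem-kdvNL-p1} by absorbing $\|yy_x\|_{L^1}\le C\eps_0\|y\|_{L^2((0,T)\times(0,L))}$ is exactly the paper's argument. Where you genuinely diverge is \eqref{lem-kdvNL-p2}. The paper never passes through \eqref{lem-kdvNL-p1} there: it bounds the divergence-form source by the duality product estimate
\begin{equation*}
\|y^2\|_{L^1((0,T)\times(0,L))}\le C\,\|y\|_{L^2((0,T);H^{-1}(0,L))}\,\|y\|_{L^2((0,T);H^{1}(0,L))}\le C\eps_0\,\|y\|_{L^2((0,T);H^{-1}(0,L))}
\end{equation*}
(valid since $y(t,\cdot)\in H^1_0(0,L)$), and then absorbs this term into the left-hand side of \eqref{lem-kdv3-cl1-*}, so the $H^{-1}$ estimate is self-contained. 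You instead write $\|y^2\|_{L^1}=\|y\|_{L^2((0,T)\times(0,L))}^2$, invoke the already-proved \eqref{lem-kdvNL-p1}, and downgrade $\|u\|_{H^{-1/3}}$ to $\|u\|_{H^{-2/3}}$ via $\|u\|_{H^{-1/3}(\mR)}^2\le \|u\|_{L^2(\mR)}\|u\|_{H^{-2/3}(\mR)}\le\eps_0\|u\|_{H^{-2/3}(\mR)}$ — which is sound, needs no absorption at this stage, and is in fact the same interpolation trick the paper uses later in the proof of \Cref{thm-NL} (cf.~\eqref{thm-NL-interpolation-0}). The trade-off: the paper's route is purely structural (it works with any bound making $\|y\|_{L^2 H^1}$ small), while yours makes the logical dependence \eqref{lem-kdvNL-p1} $\Rightarrow$ \eqref{lem-kdvNL-p2} explicit and leans on the smallness of $\|u\|_{L^2}$ a second time. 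A further point in your favor: \Cref{lem-kdv3} is stated for zero initial data, and the paper applies it with $\|y_0\|_{L^2}$ appearing on the right without comment; your explicit splitting $y=w+v$, with $w$ the free evolution of $y_0$ and $v$ solving the zero-initial-data inhomogeneous problem with the same boundary control, is the clean way to justify that step.
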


\begin{proof}[Proof of \Cref{lem-kdvNL}] We have, see e.g. \cite[Proposition 14]{CC04} for
$\eps_0$ small,
\begin{equation*}
\| y_x \|_{L^2 \big( (0, T) \times (0, L) \big)} \le C_T \Big(\| y_0 \|_{L^2(0, L)} + \| u
\|_{L^2(\mR_+)} \Big),
\end{equation*}
which yields
\begin{equation}\label{lem-kdvNL-p1-cc}
\|y_x \|_{L^2 \big( (0, T) \times (0, L) \big)} \le C  \eps_0.
\end{equation}

Set
\[
f (t, x) =  - y (t, x) \partial_x y (t, x).
\]
The Cauchy–Schwarz inequality and \eqref{lem-kdvNL-p1-cc} yield
\[
\|f \|_{L^1(\mR_+ \times (0, L))} \le C \eps_0 \| y \|_{L^2\big(\mR_+ \times (0, L)
\big)}.
\]
Applying \Cref{lem-kdv3}, and more precisely~\eqref{lem-kdv3-cl1}, we have
\begin{equation*}
\| y \|_{L^2(\mR_+ \times (0, L))} \le C \eps_0 \| y \|_{L^2\big(\mR_+ \times (0, L)
\big)} + C \Big( \|y_0 \|_{L^2(0, L)} + \| u \|_{H^{-1/3}(\mR)} \Big).
\end{equation*}
By choosing $\eps_0$ sufficiently small, one can absorb the first term of the RHS by the
LHS and assertion \eqref{lem-kdvNL-p1} follows.

To prove \eqref{lem-kdvNL-p2}, one notes
\[
\|y^2 \|_{L^1\big( (0, T) \times (0, L) \big)} \le C \| y \|_{L^2\big( (0, T); H^{-1} (0,
L) \big)}  \| y \|_{L^2\big( (0, T); H^{1} (0, L) \big)}
\mathop{\le}^{\eqref{lem-kdvNL-p1-cc}} C \eps_0 \| y \|_{L^2\big( (0, T); H^{-1} (0, L)
\big)}.
\]
By  \Cref{lem-kdv3} (this time Eq.~\eqref{lem-kdv3-cl1-*}), we obtain
\begin{equation*}
\| y \|_{L^2( (0, T);  H^{-1} (0, L))} \le C \eps_0 \| y \|_{L^2\big((0, T); H^{-1}(0, L)
\big)} + C \Big( \|y_0 \|_{L^2(0, L)} + \| u \|_{H^{-2/3}(\mR)} \Big).
\end{equation*}
By choosing $\eps_0$ sufficiently small, one can absorb the first term of the RHS by the
LHS and assertion \eqref{lem-kdvNL-p2} follows.
\end{proof}

We are ready to give the

\begin{proof}[Proof of \Cref{thm-NL}] By \Cref{lem-E}, the constant $E$ is not $0$.  Let
$\eps_0$ be a small positive constant, which depends only on $k$ and $l$ and is determined
later.  We prove \Cref{thm-NL} by contradiction.    Assume that there exists a solution $y
\in C\big([0, + \infty); H^2(0, L) \big) \cap L^2_{\loc}\big([0, + \infty); H^3(0, L)
\big) $ of
\eqref{sys-y-O}  with $y(t, \cdot) =0$ for $t \ge T$,  for some $u \in H^{2/3}(0, +
\infty)$, for some $0< \eps < \eps_0$, and for some $0< T < T_*/2$  with   $\| u
\|_{H^{2/3}(\mR_+)} < \eps_0$, $u(0) = 0$,  and $\supp u \subset [0, T]$.

We have, for $\eps_0$ small, see e.g., \cite[Proposition 14]{CC04},
\begin{equation}\label{thm-NL-yyy}
\| y\|_{L^2 \big( (0, T); H^1(0, L) \big)} \le C \Big( \| y_0 \|_{L^2(0, L)} + \| u
\|_{L^2(\mR_+)} \Big).
\end{equation}

Set
\begin{equation}\label{thm-def-y1}
y_1 (t, x) = y  (t, x) -  c \int_{0}^L y(t, \eta) \Psi(t, \eta) \diff  \eta \,   \Psi(t,
x),
\end{equation}
with $\dsp c^{-1}  := \int_0^L |\Psi(0, \eta)|^2 \diff  \eta $. Since $y_0(x) = 
\epsilon \Psi(0,x)$, this choice of $c$ ensures that $y_1(0,
\cdot) = 0$ in $(0, L)$.  Then $y_1 \in C\big([0, + \infty); L^2(0, L) \big) \cap
L^2_{\loc}\big([0, + \infty); H^1(0, L) \big) $ is the solution of
\begin{equation*}\left\{
\begin{array}{cl}
y_{1, t} (t, x) + y_{1, x} (t, x) + y_{1, xxx} (t, x)  + f (t, x)= 0 &  \mbox{ in } (0, +
\infty) \times (0, L), \\[6pt]
y_1(t, x=0) = y_1(t, x=L) = 0 & \mbox{ in } (0, + \infty), \\[6pt]
y_{1, x} (t , x= L) = u(t) & \mbox{ in }  (0, +\infty), \\[6pt]
y_1(0, \cdot)  = 0,
\end{array}\right.
\end{equation*}
where
\[
f(t, x) = f_{1} (t, x) + f_{2, x} (t, x),
\]
with
\[
f_1(t, x) = - c \int_0^L y y_x (t, \eta) \Psi (t, \eta) \diff  \eta \,   \Psi(t, x) =
\frac{c}{2} \int_0^L y^2(t, \eta) \Psi_x (t, \eta) \diff  \eta \,   \Psi(t, x),
\]
and
\[
f_2(t, x) = \frac{1}{2} y^2 (t, x).
\]
By \Cref{lem-kdvNL}, we have
\begin{equation}\label{thm-NL-y}
\| y \|_{L^2\big( (0, T) \times (0, L) \big)}  \le C \Big(\| y_0\|_{L^2(0, L)} + \| u
\|_{H^{-1/3}(\mR)} \Big),
\end{equation}
and
\begin{equation}\label{thm-NL-yy1-p1}
\|y\|_{L^2\big( (0, T); H^{-1} (0, L) \big)} \le C\Big(\| y_0 \|_{L^2(0, L)} + \|
u\|_{H^{-2/3} (\mR)} \Big).
\end{equation}
From the definition of $y_1$ in \eqref{thm-def-y1},  and \eqref{thm-NL-yy1-p1}, after
applying \Cref{lem-kdv3} to $y - y_1$, we obtain
\begin{equation}\label{thm-NL-yy1-p2}
\|y_1\|_{L^2\big( (0, T); H^{-1} (0, L) \big)} \le C\Big(\| y_0 \|_{L^2(0, L)} + \|
u\|_{H^{-2/3} (\mR)} \Big).
\end{equation}

Let $y_2 \in C\big([0, + \infty); L^2(0, L) \big) \cap L^2_{\loc}\big([0, + \infty);
H^1(0, L) \big)$ be the unique solution of
\begin{equation*}\left\{
\begin{array}{cl}
y_{2, t} (t, x) + y_{2, x} (t, x) + y_{2, xxx} (t, x)  = - f (t, x)&  \mbox{ in } (0,
+\infty) \times  (0, L), \\[6pt]
y_2(t, x=0) = y_2(t, x=L) = 0 & \mbox{ in }  (0, +\infty), \\[6pt]
y_{2, x} (t , x= L) = 0& \mbox{ in } (0, +\infty), \\[6pt]
y_2(0, \cdot) = 0,
\end{array}\right.
\end{equation*}
and let $y_3 \in C\big([0, + \infty); L^2(0, L) \big) \cap L^2_{\loc}\big([0, + \infty);
H^1(0, L) \big)$ be the unique solution of
\begin{equation*}\left\{
\begin{array}{cl}
y_{3, t} (t, x) + y_{3, x} (t, x) + y_{3, xxx} (t, x)  = 0&  \mbox{ in }  (0, +\infty)
\times  (0, L), \\[6pt]
y_3(t, x=0) = y_3(t, x=L) = 0 & \mbox{ in } (0, +\infty), \\[6pt]
y_{3, x} (t , x= L) = u (t) & \mbox{ in } (0, +\infty), \\[6pt]
y_3(0, \cdot) = 0.
\end{array}\right.
\end{equation*}
Then
\[
y_1 = y_2 + y_3.
\]

There exists $u_4 \in L^2(0, +\infty)$  such that $\supp u_4 \subset [2T_*/3, T_*]$,
\begin{equation*}
\| u_4\|_{L^2(0, + \infty)} \le C \| y_3(2T_*/3, \cdot)\|_{L^2(2T_*/3, T_*)},
\end{equation*}
and
\[
y_4(T_*, \cdot ) = 0,
\]
where $y_4 \in C\big([0, + \infty); L^2(0, L) \big) \cap L^2_{\loc}\big([0, + \infty);
H^1(0, L) \big)$ is the unique solution of
\begin{equation*}\left\{
\begin{array}{cl}
y_{4, t} (t, x) + y_{4, x} (t, x) + y_{4, xxx}(t, x)   = 0&  \mbox{ in } (2T_*/3, +
\infty) \times (0, L), \\[6pt]
y_4 (t, x=0) = y_4(t, x=L) = 0 & \mbox{ in } (2T_*/3, +\infty), \\[6pt]
y_{4, x} (t , x= L) = u_4 (t) & \mbox{ in } (2T_*/3, +\infty), \\[6pt]
y_4(T_*/2, \cdot) =  y_3(2T_*/3, \cdot).
\end{array}\right.
\end{equation*}
Such an $u_4$ exists since $y_3(2T_*/3, \cdot)$ is generated from zero at time $0$, see
\cite{Rosier97}.

Since $y_2(t, \cdot) + y_3 (t, \cdot) = 0$ for $t \ge T_*/2$, we have
\begin{equation*}
\| u_4\|_{L^2(0, + \infty)} \le  C \| y_2(2T_*/3, \cdot )\|_{L^2(0, L)},
\end{equation*}
which yields
\begin{multline}\label{thm-NL-u4}
\| u_4\|_{L^2(0, + \infty)} \mathop{\le}^{ \Cref{lem-kdv3}}  C \| (f_1, f_2)\|_{L^1\big(
\mR_+ \times (0, L)\big)} \\[6pt]
\le C \min \Big\{ \| y \|_{L^2 \big( (0, T) \times (0, L)\big)}^2, \| y\|_{L^2\big( (0,
T); H^1(0, L) \big)}  \| y\|_{L^2\big( (0, T); H^{-1}(0, L) \big)} \Big\} \\[6pt]
\mathop{\le}^{\eqref{thm-NL-yyy}, \eqref{thm-NL-y}, \eqref{thm-NL-yy1-p1}} C  \min \Big\{
\Big(\| y_0\|_{L^2(0, L)} + \| u \|_{H^{-1/3}(\mR)} \Big)^2, \eps_0 \Big(\| y_0\|_{L^2(0,
L)} + \| u \|_{H^{-2/3}(\mR)} \Big)  \Big\}.
\end{multline}

Let $\ty \in C\big([0, + \infty); L^2(0, L) \big) \cap L^2_{\loc}\big([0, + \infty);
H^1(0, L) \big)$ be the unique solution of
\begin{equation*}\left\{
\begin{array}{cl}
\ty_{t} (t, x) + \ty_{x} (t, x) + \ty_{xxx} (t, x)  = 0&  \mbox{ in } (0, +\infty) \times
(0, L), \\[6pt]
\ty(t, x=0) = \ty(t, x=L) = 0 & \mbox{ in } (0, +\infty), \\[6pt]
\ty_{x} (t , x= L) = u (t) + u_4 (t)& \mbox{ in } (0, +\infty), \\[6pt]
\ty(0, \cdot) = 0,
\end{array}\right.
\end{equation*}
Then, by the choice of $u_4$,
\[
\ty(t, \cdot) = 0 \mbox{ for } t \ge T_*.
\]

Multiplying the equation of $y$ with $\Psi(t, x)$, integrating by parts on $[0, L]$, and
using
\eqref{rem-Psi1} and \eqref{rem-Psi2}, we have
\begin{equation}\label{thm-NL-B}
\frac{d}{dt} \int_{0}^L y (t, x) \Psi(t, x)\diff x - \frac{1}{2} \int_0^L y^2 (t, x)
\Psi_x (t, x) \diff x = 0.
\end{equation}
Integrating \eqref{thm-NL-B} from 0 to $T$ and using the fact $y(T, \cdot) = 0$ yield
\begin{equation}\label{thm-NL-id}
\int_{0}^L y_0 (x)  \Psi(0, x) \diff x  + \frac{1}{2} \int_0^T \int_0^L y^2 (t, x)
\Psi_x(t, x) \diff x \diff t  = 0.
\end{equation}

It is clear that
\begin{multline}\label{thm-NL-yty}
\left| \int_{0}^{T} \int_0^L y^2 (t, x) \Psi_x (t, x)  \diff x \diff t  -
\int_{0}^{+\infty} \int_0^L \ty^2 (t, x) \Psi_x (t, x)  \diff x \diff t  \right| \\[6pt]
\le \left| \int_{0}^{T} \int_0^L y^2 (t, x) \Psi_x (t, x)  \diff x \diff t  - \int_{0}^{T}
\int_0^L y_1^2 (t, x) \Psi_x (t, x)  \diff x \diff t  \right| \\[6pt]
+ \left| \int_{0}^{+\infty} \int_0^L y_1^2 (t, x) \Psi_x (t, x)  \diff x \diff t  -
\int_{0}^{+\infty} \int_0^L \ty^2 (t, x) \Psi_x (t, x)  \diff x \diff t  \right|.
\end{multline}
We next estimate the two terms of the RHS.

We begin with the first term. We have
\begin{multline}\label{thm-NL-yy1-p3-0}
\left| \int_{0}^{T} \int_0^L y^2 (t, x) \Psi_x (t, x)  \diff x \diff t  - \int_{0}^{T}
\int_0^L y_1^2 (t, x) \Psi_x (t, x)  \diff x \diff t  \right|  \\[6pt]
\le C \| y - y_1 \|_{L^2\big( (0, T); H^1(0, L) \big)}  \| (y, y_1) \|_{L^2\big( (0, T);
H^{-1} (0, L) \big)}.
\end{multline}
By considering the system of $y-y_1$, we obtain
\begin{multline}\label{thm-NL-yy1-p3}
\| y - y_1 \|_{L^2\big( (0, T); H^{1} (0, L) \big)} \le C \Big( \| y_0 \|_{L^2(0, L)} + \|
f_1\|_{L^1\big( (0, T); L^2(0, L)  \big)}\Big) \\[6pt]
\le C \| y_0 \|_{L^2(0, L)}  +
C \| y \|_{L^2 \big((0, T) \times (0, L) \big)}^2  \mathop{\le}^{\eqref{thm-NL-y}} C \|
y_0 \|_{L^2(0, L)} + C\Big(\| y_0 \|_{L^2(0, L)} + \| u\|_{H^{-1/3} (\mR)} \Big)^2.
\end{multline}
Combining \eqref{thm-NL-yy1-p1}, \eqref{thm-NL-yy1-p2}, and \eqref{thm-NL-yy1-p3}, we
derive from \eqref{thm-NL-yy1-p3-0} that
\begin{multline}\label{thm-NL-yy1}
 \left| \int_{0}^{T} \int_0^L y^2 (t, x) \Psi_x (t, x)  \diff x \diff t  - \int_{0}^{T}
\int_0^L y_1^2 (t, x) \Psi_x (t, x)  \diff x \diff t  \right|  \\[6pt]
 \le C \eps_0 \| y_0 \|_{L^2(0, L)}  + C \Big(\| y_0 \|_{L^2(0, L)} + \| u\|_{H^{-2/3}
(\mR)} \Big)  \Big(\| y_0 \|_{L^2(0, L)} + \| u\|_{H^{-1/3} (\mR)} \Big)^2 .
\end{multline}

We next estimate the second term of the RHS of \eqref{thm-NL-yty}.
It is clear that
\begin{multline}\label{thm-NL-y1ty-p0}
 \left| \int_{0}^{+ \infty} \int_0^L y_1^2 (t, x) \Psi_x (t, x)  \diff x \diff t  -
\int_{0}^{+\infty} \int_0^L \ty^2 (t, x) \Psi_x (t, x)  \diff x \diff t  \right|  \\[6pt]
 \le C \| y_1 - \ty \|_{L^2\big((0, T_*); H^1(0, L) \big)} \Big(
 \| y_1 \|_{L^2\big((0, T_*); H^{-1}(0, L) \big)} + \| \ty \|_{L^2\big((0, T_*); H^{-1}(0,
L) \big)}  \Big).
\end{multline}
Consider the systems of $y_1 -y$ and $\ty$. We have
\begin{align}\label{thm-NL-y1ty-p0-1}
\| y_1 - \ty \|_{L^2\big((0, T_*); H^1(0, L) \big)}  \le &  C \Big( \| f \|_{L^1\big((0,
T); L^2(0, L) \big)} + \| u_4 \|_{L^2(0, T)} \Big)  \\[6pt]
 \mathop{\le}^{\eqref{thm-NL-u4}} & C \| y y_x \|_{L^1\big((0, T); L^2(0, L)\big)}  + C
\Big(\| y_0\|_{L^2(0, L)} + \| u \|_{H^{-1/3}(\mR)} \Big)^2 \nonumber \\[6pt]
 \mathop{\le}^{\eqref{thm-NL-yyy}} & C \Big( \| y_0 \|_{L^2(0, L)} + \| u\|_{L^2(\mR_+)}
\Big)^2 \nonumber,
\end{align}
and, by \Cref{lem-kdv3} and \eqref{thm-NL-u4},
\begin{equation}\label{thm-NL-y1ty-p0-2}
 \| \ty \|_{L^2\big((0, T_*); H^{-1}(0, L) \big)}
 \le C \| (u, u_4) \|_{H^{-2/3}(\mR)}
 \le C \Big( \| y_0 \|_{L^2(0, L)} + \| u\|_{H^{-2/3}(\mR)} \Big).
\end{equation}
Using  \eqref{thm-NL-yy1-p2}, \eqref{thm-NL-y1ty-p0-1}, and \eqref{thm-NL-y1ty-p0-2}, we
derive from
\eqref{thm-NL-y1ty-p0} that
\begin{multline}\label{thm-NL-y1ty}
 \left| \int_{0}^{+ \infty} \int_0^L y_1^2 (t, x) \Psi_x (t, x)  \diff x \diff t  -
\int_{0}^{+\infty} \int_0^L \ty^2 (t, x) \Psi_x (t, x)  \diff x \diff t  \right|  \\[6pt]
 \le C \Big( \| y_0 \|_{L^2(0, L)} + \| u\|_{L^2(\mR_+)} \Big)^2 \Big( \| y_0 \|_{L^2(0,
L)} + \| u\|_{H^{-2/3}(\mR)} \Big).
\end{multline}

Combining   \eqref{thm-NL-yty}, \eqref{thm-NL-yy1}, and \eqref{thm-NL-y1ty} yields
\begin{multline}\label{thm-NL-p4}
\left| \int_{0}^{T} \int_0^L y^2 (t, x) \Psi_x (t, x)  \diff x \diff t  -
\int_{0}^{+\infty} \int_0^L \ty^2 (t, x) \Psi_x (t, x)  \diff x \diff t  \right| \\[6pt]
\le C \eps_0 \| y_0 \|_{L^2(0, L)}  + C \Big(\| y_0 \|_{L^2(0, L)} + \| u\|_{H^{-2/3}
(\mR)} \Big) \Big(\| y_0 \|_{L^2(0, L)} + \| u\|_{L^2 (\mR_+)} \Big)^2.
\end{multline}

On the other hand, from \Cref{corollary-dir} and the choice of $y_0$, we have
\begin{multline}\label{thm-NL-p5}
 \int_{0}^L y_0 (x)  \Psi(0, x) \diff x + \frac{1}{2}  \int_{0}^{+\infty} \int_0^L \ty^2
(t, x) \Psi_x (t, x)  \diff x \diff t \\[6pt]
\ge C \Big( \|y_0\|_{L^2(0, L)} +  \| u + u_4\|_{H^{-2/3}(\mR)}^2 \Big).
\end{multline}
Using the fact
\[
\| u + u_4\|_{H^{-2/3}(\mR)}^2 \ge C  \| u\|_{H^{-2/3}(\mR)}^2   - C \| u_4
\|_{L^2(\mR)}^2
\mathop{\ge}^{\eqref{thm-NL-u4}} C   \| u\|_{H^{-2/3}(\mR)}^2-  C \Big(\| y_0 \|_{L^2(0,
L)} + \| u\|_{H^{-1/3} (\mR)} \Big)^4,
\]
 we derive from  \eqref{thm-NL-p5} that, for small $\eps_0$,
\begin{multline}\label{thm-NL-p6}
 \int_{0}^L y_0 (x)  \Psi(0, x) \diff x + \frac{1}{2}  \int_{0}^{\infty} \int_0^L \ty^2
(t, x) \Psi_x (t, x)  \diff x \diff t \\[6pt]
 \ge C \left( \|y_0\|_{L^2(0, L)} +  \| u\|_{H^{-2/3}(\mR)}^2 \right) - C \| u\|_{H^{-1/3}
(\mR)}^4 .
\end{multline}
Combining \eqref{thm-NL-id}, \eqref{thm-NL-p4}, and \eqref{thm-NL-p6} yields
\begin{align}
C \eps_0 \| y_0 \|_{L^2(0, L)} & + C \Big(\| y_0 \|_{L^2(0, L)} + \| u\|_{H^{-2/3} (\mR)}
\Big) \Big(\| y_0 \|_{L^2(0, L)} + \| u\|_{L^2 (\mR_+)} \Big)^2 \\[6pt]
& \mathop{\geq}^{\eqref{thm-NL-p4}} \left|\int_{0}^{T} \int_0^L y^2 (t, x) \Psi_x (t, x)  \diff x \diff t  -
\int_{0}^{+\infty} \int_0^L \ty^2 (t, x) \Psi_x (t, x)  \diff x \diff t  \right|  \nonumber\\[6pt]
& \mathop{\geq}^{\eqref{thm-NL-id}} \int_{0}^L y_0 (x)  \Psi(0, x) \diff x + \frac{1}{2}  \int_{0}^{\infty} \int_0^L 
\ty^2
(t, x) \Psi_x (t, x)  \diff x \diff t  \nonumber\\[6pt] 
& \mathop{\ge}^{\eqref{thm-NL-p6}} C \Big( \|y_0\|_{L^2(0, L)} +  \| u\|_{H^{-2/3}(\mR)}^2 - C \| u\|_{H^{-1/3}(\mR)}^4
\Big). \nonumber
\end{align}
It follows that, if $\eps_0$ is fixed but sufficiently small,
\begin{equation}\label{thm-NL-coucou}
 \| u\|_{H^{-1/3} (\mR)}^4 + \| u\|_{H^{-2/3} (\mR)} \| u\|_{L^2 (\mR_+)}^2  \ge C \|
u\|_{H^{-2/3}(\mR)}^2.
\end{equation}

We have
\begin{equation}\label{thm-NL-interpolation-0}
\| u \|_{H^{-1/3}(\mR)}^2 \le C  \| u\|_{L^2(\mR)} \| u\|_{H^{-2/3}(\mR)} \le C \eps_0 \|
u\|_{H^{-2/3}(\mR)},
\end{equation}
and
\begin{equation}\label{thm-NL-interpolation-2-1}
\| u \|_{L^{2}(\mR)}^2 \le C  \| u\|_{H^{-2/3}(\mR)} \| u\|_{H^{2/3}(\mR)},
\end{equation}
(recall that  we extended $u$ by 0 for $t < 0$).  Let $U$ be the even extension of $u \Big|_{\mR_+}$ in
$\mR$.
Applying the Hardy inequality for fractional Sobolev space $H^{2/3}(\mR)$ for $U$ after
noting that $U(0) = 0$, see e.g.  \cite[Theorem 1.1]{NgSq18} \footnote{We here apply
\cite[ii) of Theorem 1.1]{NgSq18} with $\gamma = - 2/3$, $\tau = p = 2$, $s = 2/3$, $a=1$,
$\alpha = 0$.}, we derive that
\[
\| |\cdot |^{-2/3} U (\cdot) \|_{L^2(\mR)} \le C \| U \|_{H^{2/3}(\mR)}.
\]
We have
\[
\| U \|_{H^{2/3}(\mR)} \le C \| u\|_{H^{2/3}(\mR_+)}.
\]
since $U$ is an even extension of $u$, and
\[
| U |_{H^{2/3}(\mR)}^2 \sim \int_{\mR} \int_{\mR} \frac{|U(s) - U(t)|^2}{|s - t|^{1 +
4/3}} \diff  s \diff t,\quad
| u |_{H^{2/3}(\mR)}^2 \sim \int_{\mR_+} \int_{\mR_+} \frac{|u(s) - u(t)|^2}{|s - t|^{1 +
4/3}} \diff  s \diff t.
\]
We derive that
\[
\| |\cdot|^{-2/3} u (\cdot) \|_{L^2(\mR)} \le C \| u \|_{H^{2/3}(\mR_+)}.
\]
Since
\begin{multline*}
| u |_{H^{2/3}(\mR)}^2 \sim  \int_{\mR} \int_{\mR} \frac{|u(s) - u(t)|^2}{|s - t|^{1 +
4/3}} \diff  s \diff t \\[6pt]
\mathop{\le}^{u(s) = 0, \;  s < 0}  \int_{\mR_+} \int_{\mR_+} \frac{|u(s) - u(t)|^2}{|s -
t|^{1 + 4/3}} \diff  x \diff y +  C \int_{\mR_+} \frac{|u(t)|^2}{t^{4/3}} \diff t \\[6pt]
\le C \| u\|_{H^{2/3}(\mR_+)}^2 +  C \int_{\mR_+} \frac{|u(t)|^2}{t^{4/3}} \diff t,
\end{multline*}
it follows that
\begin{equation}\label{thm-NL-interpolation-2-2}
\| u \|_{H^{2/3}(\mR)} \le C \| u \|_{H^{2/3}(\mR_+)}.
\end{equation}
Here we also used the fact $u = 0 $ in $\mR_-$.  Combining
\eqref{thm-NL-interpolation-2-1} and \eqref{thm-NL-interpolation-2-2} yields
\begin{equation}\label{thm-NL-interpolation-2}
\| u \|_{L^{2}(\mR)}^2 \le C \eps_0 \| u\|_{H^{-2/3}(\mR)}. 
\end{equation}

Using \eqref{thm-NL-interpolation-0} and \eqref{thm-NL-interpolation-2}, we derive from
\eqref{thm-NL-coucou} that, $\|u\|_{H^{-2/3}}^2 \leq C\epsilon_0^2 \|u\|_{H^{-2/3}}^2 + 
C\epsilon_0 \|u\|_{H^{-2/3}}^2$. So, for fixed sufficiently small $\eps_0$,
\[
u =0.
\]
As a consequence, we obtain
\[
\|y(t, \cdot) - \eps \Psi(T_*/2, \cdot) \|_{L^2(0, L)} \le C \eps^2. 
\]
One has a contradiction if $\eps_0$ is sufficiently small.  The proof is complete.
\end{proof}

\begin{remark} Viewing the proof of \Cref{thm-NL}, it is natural  to ask whether or not
one needs to derive estimates for the (linear and nonlinear) KdV systems using low regular
data. In fact, without using these estimates, one might require that $\| u\|_{H^2(0, T)}$
or even $\| u\|_{H^3(0, T)}$ is small.
\end{remark}

\section{Controllability of the KdV system with controls in \texorpdfstring{$H^1$}{H¹}}
\label{sect-CP}

For $T>0$, set
\[
X = C\big([0, T]; Y \big) \cap L^2\big((0, T); H^4([0, L]) \big)
\]
with the corresponding norm. Here we denote
\[
Y = H^3(0, L) \cap H^1_0(0, L),
\]
which is a Hilbert space with the corresponding scalar product.

\medskip
In this section, we prove the following local  controllability of the KdV system
\eqref{intro-sys-KdV} and \eqref{intro-IC-KdV}:

\begin{theorem}\label{thm-CP} Let $L > 0$, and  $k, l \in \N$. Let $p$ be defined by
\eqref{def-p}. Assume  that  \eqref{def-L} holds,   $2k + l \not \in 3 \N$,  and the
dimension of $\M$ is 2.  Given  $T> \pi/ p$,  there exists $\eps_0 > 0$ such that for
$y_0, y_T \in Y$ with
\[
\| (y_0, y_T) \|_{Y} \le \eps_0,
\]
there exists $u \in H^{1}(0, T)$ such that  $u(0) = y_0'(L)$,
\[
\| u\|_{H^{1}(0, T)} \le C  \| (y_0, y_T) \|_{Y}^{1/2},
\]
and the corresponding solution  $y \in X$ of the nonlinear system \eqref{intro-sys-KdV}
with $y(t = 0, \cdot) = y_0$ satisfies $y(t = T, \cdot) = y_T$.
\end{theorem}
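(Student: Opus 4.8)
The plan is to prove \Cref{thm-CP} by the power series expansion method run in the \emph{constructive} direction, dual to the obstruction of \Cref{thm-NL}: instead of exhibiting a forbidden direction in small time, we exploit that for $T>\pi/p$ the quadratic term of the expansion can push the state in \emph{every} direction of the two-dimensional uncontrollable space $\M$. Throughout I work in $X=C([0,T];Y)\cap L^2((0,T);H^4(0,L))$ with $Y=H^3(0,L)\cap H^1_0(0,L)$, for which the nonlinear system \eqref{intro-sys-KdV} is well posed for small data and small $H^1$ controls satisfying the compatibility condition $u(0)=y_0'(L)$; this well-posedness, together with the Lipschitz and analytic dependence of the solution map, follows from the linear estimates of \Cref{sect-L-KdV} (in particular \Cref{lem-kdv3}) combined with a standard contraction argument on the quadratic nonlinearity $yy_x$.

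First I would split the desired motion along the orthogonal decomposition $L^2(0,L)=\M\oplus\M^\perp$. On $\M^\perp$ the linearized system is controllable (Rosier \cite{Rosier97}); I would upgrade this to a \emph{continuous linear} control operator producing controls in $H^1(0,T)$, rather than merely in $L^2$, by a modified Hilbert Uniqueness Method in which the observation functional is measured in a graph norm adapted to $Y$, so that the resulting control has the correct regularity and can be arranged to meet $u(0)=y_0'(L)$. At first order this reaches the $\M^\perp$-component of the target, with a bound linear in the data.

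The heart of the proof is the control of the $\M$-component, which the linear theory cannot reach and which is governed by the second-order term $y_2$ of the expansion. Writing the complex $\M$-projection as in \Cref{lem-1}, the quadratic functional $u_1\mapsto \int_0^L\int_0^{\infty}|y_1|^2\varphi_x e^{-ipt}$ is, by \Cref{pro-monotone}, essentially $N(u_1)^2\,E$ for a suitable base control, and \Cref{lem-E} guarantees $E\neq0$ under $2k+l\notin 3\N$. The key geometric point is that a time translation $u_1(\cdot)\mapsto u_1(\cdot-\tau)$ multiplies this quantity by $e^{-ip\tau}$, while superposing translated pulses produces bilinear cross terms; letting $\tau$ range over an interval of length exceeding $\pi/p$ (possible precisely because $T>\pi/p$) makes the image of this quadratic map all of $\M\cong\mathbb{C}$, so that every small direction of $\M$ is attainable at second order. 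This is exactly the mechanism that fails for $T<T_*/2$ in \Cref{corollary-dir}, the threshold $\pi/p$ being the half-period of the rotation $\Psi(t+\pi/p)=-\Psi(t)$ of $\M$ under the linear flow. Since the map is quadratic, reaching an $\M$-target of size $\eps_0$ costs a control of size $\eps_0^{1/2}$, which accounts for the exponent in $\|u\|_{H^1(0,T)}\le C\|(y_0,y_T)\|_Y^{1/2}$.

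Finally I would close the argument with the Banach fixed point theorem rather than a topological degree argument. I would set up a map $u\mapsto\Phi(u)$ whose fixed point is the sought control: given a candidate $u$, solve the nonlinear system, read off the errors in $\M$ and in $\M^\perp$ between $y(T,\cdot)$ and $y_T$, and correct them using the linear operator of the second paragraph for the $\M^\perp$ part and a local inverse of the surjective quadratic map above for the $\M$ part, the higher-order remainder being estimated in $X$ via \Cref{lem-kdv3} and the analyticity of the solution map and shown to be a contraction on a ball of radius $\sim\eps_0^{1/2}$. The main difficulty, and where the low-regularity estimates of \Cref{sect-L-KdV} are indispensable, is to run this scheme keeping the control in $H^1(0,T)$ and the trajectory in $X$ simultaneously, since the quadratic source $yy_x$ is only barely in the class for which both the $\M$-projections and the $H^1$ controls are well behaved; preserving the boundary compatibility $u(0)=y_0'(L)$ along the iteration is the remaining technical point.
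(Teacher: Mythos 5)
Your proposal is correct and follows essentially the same route as the paper: a modified Hilbert Uniqueness Method yielding $H^1(0,T)$ controls on $\M^\perp$ (the paper's \Cref{lem-HUM}), the power-series/quadratic mechanism with $E\neq 0$ (\Cref{lem-E}, \Cref{corollary-dir}) combined with time-translation/rotation of the $\M$-component to cover all of $\M$ when $T>\pi/p$ (the paper's \Cref{lem-CP}), and a closing Banach fixed-point argument with the $\rho^{1/2}$ scaling of the first-order control giving the bound $\|u\|_{H^1}\le C\|(y_0,y_T)\|_Y^{1/2}$. The paper's proof additionally handles a degenerate case (when $P_{\M}y_T$ is too close to the free evolution of $P_{\M}y_0$) by a short auxiliary time interval and time reversal, a technical point your sketch omits but which fits within your scheme.
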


\medskip
We recall a result in \cite{Bona03} (\cite[Lemma 3.3]{Bona03} applied to $s=3$) on the
well-posedness and the stability of the linearized system of \eqref{intro-sys-KdV}.

\begin{lemma}\label{lem-Bona03} Let $L >0$ and $T>0$. For $y_0 \in H^3(0, L) \cap H^1_0(0,
L)$,   $f \in W^{1,1}\big([0, T]; L^2(0, L) \big)$,  and $u \in H^1(0, T)$ with $u(0) =
y_0'(L)$. There exists a unique solution $y \in X$ of the system
\begin{equation}\label{Sys-y}\left\{
\begin{array}{cl}
y_{t} (t, x) + y_{x} (t, x) + y_{xxx} (t, x) = f (t, x) &  \mbox{ for } t \in
(0, T), \, x \in (0, L), \\[6pt]
y(t, x=0) = y(t, x=L) = 0 & \mbox{ for } t \in (0, T), \\[6pt]
y_{x}(t , x= L) = u(t) & \mbox{ for } t \in (0, T),\\[6pt]
y(t = 0 , \cdot) = y_0 & \mbox{ for } x \in (0, L).
\end{array}\right.
\end{equation}
Moreover,
\[
\| y \|_{X} \le C \Big( \|f \|_{W^{1,1}\big([0, T]; L^2(0, L) \big)} + \| u\|_{H^1(0, 1)}
\Big),
\]
for some positive constant $C$ depending only on $L$ and $T$.
\end{lemma}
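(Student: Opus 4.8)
The statement is the standard linear well-posedness and hidden-regularity result for the Neumann-controlled KdV equation, and the plan is to obtain it from the semigroup generated by $-\mathcal{A}$ (with $\mathcal{A}$ as in \Cref{lem-EU}) together with the dispersive smoothing effect. Recall first that $-\mathcal{A}\colon v\mapsto -v'''-v'$ is dissipative on $L^2(0,L)$ with domain $D(\mathcal{A})$, since integration by parts against the boundary conditions gives $\operatorname{Re}\langle -\mathcal{A}v,v\rangle_{L^2} = -\tfrac12|v'(0)|^2 \le 0$ for $v\in D(\mathcal{A})$; by Lumer--Phillips it generates a strongly continuous contraction semigroup $\{S(t)\}_{t\ge0}$, and the trace $v'(0)$ appearing here is the source of the smoothing gain used at the end.

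First I would remove the inhomogeneous boundary datum by a lifting. Fixing $\beta\in C^\infty([0,L])$ with $\beta(0)=\beta(L)=0$ and $\beta'(L)=1$, and setting $z = y - u\beta$, the function $z$ satisfies the homogeneous boundary conditions $z(t,0)=z(t,L)=z_x(t,L)=0$, solves $z_t = -\mathcal{A}z + \tilde f$ with $\tilde f = f - u\,(\beta'+\beta''') - u_t\,\beta$, and has initial datum $z(0,\cdot) = y_0 - u(0)\beta$. Here the compatibility hypothesis $u(0)=y_0'(L)$ is exactly what forces $z_x(0,L)=y_0'(L)-u(0)=0$, so that $z(0,\cdot)\in D(\mathcal{A})$; this is the point at which the condition $u(0)=y_0'(L)$ is used, and it is equivalent to the continuity of $y$ into $H^3$ up to $t=0$. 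One then solves $z(t) = S(t)z(0) + \int_0^t S(t-s)\tilde f(s)\,ds$ and undoes the lifting by $y = z + u\beta$; since $u\in H^1(0,T)\hookrightarrow C([0,T])$ the term $u\beta$ is harmless for both the $C_tH^3$ and the $L^2_tH^4$ norms.

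The regularity at the level of $X = C([0,T];H^3\cap H^1_0)\cap L^2((0,T);H^4)$ then splits into two parts. For the part of $\tilde f$ lying in $W^{1,1}([0,T];L^2)$ --- namely $f$ and $u(\beta'+\beta''')$, using $H^1(0,T)\hookrightarrow W^{1,1}(0,T)$ --- the classical regularity theory for $C_0$-semigroups with initial datum in $D(\mathcal{A})$ gives a solution in $C([0,T];D(\mathcal{A}))\cap C^1([0,T];L^2)\hookrightarrow C([0,T];H^3)$. The genuinely delicate contributions are, first, the term $-u_t\beta$, which is only $L^2$ in time and hence falls outside the $W^{1,1}$ theory; and second, the interior gain $y\in L^2((0,T);H^4)$, which is a dispersive (Kato) smoothing statement rather than a consequence of elliptic regularity. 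The first I would treat by an integration by parts in time in the Duhamel integral, moving $\partial_t$ off $u$ onto the semigroup and estimating the resulting boundary and trace terms using the admissibility of the Neumann operator and the $H^1$-in-time regularity of $u$; the second by the multiplier method, differentiating the equation in $x$, testing against suitable $x$-weighted multipliers, and integrating in time so that the boundary traces (the analogues of the $v'(0)^2$ term above) absorb the extra derivative and are dominated by the data. Assembling the two bounds and adding back $u\beta$ yields $\|y\|_X \le C(\|f\|_{W^{1,1}([0,T];L^2)} + \|u\|_{H^1(0,T)})$, while uniqueness follows from the $L^2$ energy estimate for the difference of two solutions. The main obstacle is precisely the interplay between the low ($H^1$) time regularity of the control and the sharp $L^2_tH^4$ smoothing at the $H^3$ data level; carrying out this sharp analysis is exactly the content of \cite[Lemma 3.3]{Bona03}, whose conclusion for $s=3$ is the assertion here.
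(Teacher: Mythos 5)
The paper offers no proof of this lemma to compare against: it is recalled as a known result, namely \cite[Lemma 3.3]{Bona03} applied to $s=3$. Your proposal is therefore not in conflict with the paper---you end by deferring the sharp analysis to exactly that reference---but it is worth recording what your supplementary sketch does and does not accomplish. The dissipativity computation $\operatorname{Re}\langle -\mathcal{A}v,v\rangle_{L^2} = -\tfrac12|v'(0)|^2$ on $D(\mathcal{A})$, the lifting $z=y-u\beta$, the identification of the compatibility condition $u(0)=y_0'(L)$ with the membership $z(0,\cdot)\in D(\mathcal{A})$, and the splitting of $\tilde f$ into a $W^{1,1}([0,T];L^2)$ part plus the rough term $-u_t\beta$ (using $H^1(0,T)\hookrightarrow W^{1,1}(0,T)$) are all correct and give a clean reduction. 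However, the two genuinely hard points are only gestured at, and one is more delicate than your text suggests: in the proposed integration by parts in the Duhamel integral, the identity $\frac{d}{ds}S(t-s)w=\mathcal{A}S(t-s)w$ is legitimate only for $w\in D(\mathcal{A})$, and $\beta\notin D(\mathcal{A})$ precisely because $\beta'(L)=1\neq 0$; so the time derivative cannot simply be moved off $u$ onto the semigroup, and one is forced into the boundary-trace (admissibility / hidden regularity) analysis that is the actual content of \cite[Lemma 3.3]{Bona03}. The same is true of the Kato-type gain $y\in L^2\big((0,T);H^4(0,L)\big)$, which does not follow from abstract semigroup regularity. In short: as a citation-plus-roadmap your proposal matches the paper's treatment; as a self-contained argument it would still need the quantitative trace estimates established in \cite{Bona03}.
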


\begin{remark} \label{rem-Bona03} By the same method,  the  conclusion also holds for
the non-linear KdV equations if
$
\| f\|_{W^{1, 1} \big( (0, T); L^2(0, L) \big)} + \| u_0\|_{H^1(0, L)}
$
is small.
\end{remark}

\medskip
In what follows in this section,  $\M^\perp$ denotes all elements of $Y$ orthogonal to
$\M$ with respect to  $L^2(0, L)$-scalar product. We also denote  $P_{\M}$ and
$P_{\M^\perp}$  the projections into $\M$ and $\M^{\perp}$ with respect to $L^2(0, L)$-scalar product.  Before giving the proof of \Cref{thm-CP}, let us establish two lemmas
used in its proof. The first one is a consequence of the Hilbert Uniqueness Method for
controls in $H^1$ and  solutions in $X$.

\begin{lemma}\label{lem-HUM} Let $L \in \cN$ and $T>0$.  There is a continuous linear map
$\cL : \M^\perp \to H^1(0, T)$ such that for $\varphi \in \M^\perp$  and $u =
\cL(\varphi)$, then $u(0) = 0$, and  the unique solution  $y \in X$ of
\begin{equation}\label{Sys-KdV-LN}\left\{
\begin{array}{cl}
y_t (t, x) + y_x (t, x) + y_{xxx} (t, x) = 0 &  \mbox{ for } t
\in (0, T), \, x \in (0, L), \\[6pt]
y(t, x=0) = y(t, x=L) = 0 & \mbox{ for } t \in (0, T), \\[6pt]
y_x(t , x= L) = u(t) & \mbox{ for } t \in (0, T), \\[6pt]
y(t=0, \cdot) = 0,
\end{array}\right.
\end{equation}
satisfies $y(T, \cdot) = \varphi$.
\end{lemma}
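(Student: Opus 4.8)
The plan is to run the Hilbert Uniqueness Method (HUM) at the regularity level matched to the statement, pairing $H^1(0,T)$ controls against targets in $Y$, so that the $L^2$-controllability to $\M^\perp$ of Rosier \cite{Rosier97} is upgraded to the $(H^1,Y)$-scale. First I would set up the duality: multiplying the free system \eqref{Sys-KdV-LN} by a smooth adjoint state $\phi$ and integrating by parts on $(0,T)\times(0,L)$, the only surviving boundary term is $\int_0^T u(t)\,\phi_x(t,L)\diff t$, provided $\phi$ solves the adjoint KdV equation with boundary conditions $\phi(t,0)=\phi(t,L)=\phi_x(t,0)=0$. This gives the reachability identity $\langle y(T,\cdot),\phi_T\rangle_{L^2(0,L)}=\int_0^T u(t)\,\phi_x(t,L)\diff t$, so that $\phi\mapsto \phi_x(\cdot,L)$ is the observation operator dual to the boundary control, and $\varphi$ is reached exactly when $\int_0^T u\,\phi_x(\cdot,L)\diff t=\langle\varphi,\phi_T\rangle_{L^2(0,L)}$ for all admissible adjoint data $\phi_T$.

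Next I would fix the control space $V:=\{u\in H^1(0,T):u(0)=0\}$, normed by $\|u\|_V=\|u'\|_{L^2(0,T)}$ (a norm by Poincar\'e's inequality, since $u(0)=0$). The modification of HUM is to penalize the control in the $V$-norm instead of in $L^2$: to an adjoint datum $\phi_T$ associate $u_{\phi_T}:=\mathcal R_V\big(\phi_x(\cdot,L)\big)$, where $\mathcal R_V$ is the Riesz isomorphism of $V$; concretely $u_{\phi_T}$ solves $-u''=\phi_x(\cdot,L)$ on $(0,T)$ with $u(0)=0$ and $u'(T)=0$. By construction $u_{\phi_T}\in H^1(0,T)$ with $u_{\phi_T}(0)=0$, which is exactly the regularity and the compatibility condition in the statement, the latter being forced anyway by $y(0,\cdot)=0$. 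The map $\cL:\varphi\mapsto u$ is then produced by minimizing the strictly convex functional $J(\phi_T)=\tfrac12\|u_{\phi_T}\|_V^2-\langle\varphi,\phi_T\rangle_{L^2(0,L)}$ over adjoint data modulo the finite-dimensional unobservable space; its Euler--Lagrange equation is precisely the reachability identity with $u=u_{\phi_T}$, so a minimizer yields a control steering $0$ to $\varphi$, and $\cL$ is linear with $\|\cL\varphi\|_{H^1(0,T)}\le C\|\varphi\|_Y$.

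The analytic core is the observability inequality that makes $J$ coercive on the correct space, namely $\|\phi_T\|_{Y'}\le C\,\|\phi_x(\cdot,L)\|_{V'}$ modulo $\M$, pairing the weak norm $V'$ on the observation with the norm $Y'$ dual to the target space --- the scale at which $H^1$-controls and $Y$-targets are genuinely dual (heuristically, $\varphi\in Y\approx H^3$ costs three spatial derivatives, which for KdV matches the one time-derivative gained in passing from $L^2$ to $H^1$ in time, since $\partial_t\sim\partial_x^3$). I would derive it from Rosier's $L^2$ observability \cite{Rosier97} (valid modulo $\M$) by transporting it to the $(Y',V')$-scale using two ingredients already available here: the sharp trace (hidden-regularity) estimates for the linear KdV equation in negative Sobolev norms, of the type established in \Cref{lem-kdv1}, which quantify the exchange between interior and boundary traces; and the equation itself, which converts smoothness of $\phi_T$ into time-regularity of the trace $\phi_x(\cdot,L)$. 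The finite-dimensional obstruction $\M$ is handled by a compactness--uniqueness argument, using that $\mathcal A$ has discrete spectrum (\Cref{lem-EU}), so the unique-continuation defect is exactly the finite-dimensional $\M$ and can be absorbed.

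Finally, with $u=\cL\varphi\in H^1(0,T)$ and $u(0)=0$ in hand, the regularity $y\in X$ and the identity $y(T,\cdot)=\varphi$ follow from the $X$-well-posedness recalled in \Cref{lem-Bona03}, and the continuity of $\cL$ from the coercivity constant of $J$. I expect the main obstacle to be precisely this matched observability at the $(Y',V')$-scale: Rosier's estimate is stated at the $L^2$ level, and carrying it to the correct dual scale while keeping the finite-dimensional $\M$ under control --- reconciling the boundary conditions of $\mathcal A$ and $\mathcal A^*$ in the trace estimates --- is where the real work lies.
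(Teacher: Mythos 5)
Your architecture is coherent and genuinely different from the paper's: you run HUM at the dual scale, with controls produced by the Riesz map of $V=\{u\in H^1(0,T):u(0)=0\}$, and you correctly identify that everything reduces to the observability inequality $\|\phi_T\|_{Y'}\le C\|\phi_x(\cdot,L)\|_{V'}$ modulo $\M$, which is indeed the dual formulation of the lemma. The gap is that you never prove this inequality, and it is the entire analytic content of the statement; the claim that it follows by ``transporting'' Rosier's $L^2$ observability is not routine and, as sketched, breaks down. To lower the norm on $\phi_T$ you must in effect apply the $L^2$ estimate to a solution $\tilde\phi$ with $\partial_t\tilde\phi=\pm\phi$, i.e.\ with final datum $(\mathcal A^*)^{-1}\phi_T$, where $D(\mathcal A^*)=\{v\in H^3(0,L):v(0)=v(L)=v'(0)=0\}$, and you then need an inequality of the type $\|\tilde\phi_x(\cdot,L)\|_{L^2(0,T)}\le C\,\|\partial_t\tilde\phi_x(\cdot,L)\|_{V'}$. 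For a general function $g$ the inequality $\|g\|_{L^2(0,T)}\le C\|g'\|_{V'}$ is false: testing $g'$ against $v\in V$ leaves the boundary term $g(T)v(T)$, and here $g(T)=\big((\mathcal A^*)^{-1}\phi_T\big)_x(L)$ has no reason to vanish, since $v'(L)=0$ is not among the boundary conditions of $D(\mathcal A^*)$. Moreover $\mathcal A^*$ need not be invertible at critical lengths (for $L\in 2\pi\N_*$ the function $1-\cos x$ lies in its kernel), so the preimage only exists after quotienting, which must be organized before the transport, not absorbed afterwards. Your compactness--uniqueness remark only treats the finite-dimensional defect and supplies none of this; so the proposal stops exactly where the proof has to start.

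It is instructive that the paper's proof goes in the opposite direction precisely to avoid these issues: instead of weakening norms it strengthens them. For $\psi\in\M_1^\perp=\{w\in\M^\perp:w_x(0)=0\}$ it applies the known $L^2$ observability to \emph{both} the backward solution $y^*$ of \eqref{Sys-KdV-BW} and to $y^*_t$ --- legitimate because $y^*_t$ is again a backward solution, with final datum $-(\psi'''+\psi')$, which stays in $\M^\perp$ --- and sums the two estimates with a large parameter $\gamma$, obtaining the coercivity \eqref{lem-HUM-p1-2} at the $H^3$ level for the observation $t\mapsto\eta(t)y^*_x(t,L)$, which is $H^1$ in time. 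The control is then built directly by Lax--Milgram as $u=\eta\,Y^*_x(\cdot,L)$, the cutoff $\eta$ (vanishing near $t=0$) giving $u(0)=0$ for free. The price of going ``up'' is the compatibility condition $\psi_x(0)=0$ on adjoint data, which the paper pays with the restriction to $\M_1^\perp$ and a two-step construction on a doubled time interval to recover all of $\M^\perp$; your route ``down'' would avoid that compatibility issue, but only if the negative-norm observability were actually established.
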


\begin{proof}  Set
\[
\M_1^\perp = \Big\{w \in   \M^\perp ; w_x(0) = 0 \Big\}.
\]
For $\psi \in \M^\perp_1$, by \Cref{lem-Bona03}, there exists a unique solution  $y^* \in
X$  of the backward KdV system
\begin{equation}\label{Sys-KdV-BW}\left\{
\begin{array}{cl}
y^*_t (t, x) + y^*_x (t, x) + y^*_{xxx} (t, x) = 0 &  \mbox{ for } t
\in (0, T), \, x \in (0, L), \\[6pt]
y^*(t, x=0) = y^*(t, x=L) = 0 & \mbox{ for } t \in (0, T), \\[6pt]
y^*_x(t , x= 0) = 0 & \mbox{ for } t \in (0, T),\\[6pt]
y^*(T, \cdot) = \psi.
\end{array}\right.
\end{equation}
Applying the observability inequality to $y^*$ and $y^*_t$ (see e.g. \cite[Theorem 2.4]{Cerpa07} and also \cite[the proof of Proposition 3.9]{Rosier97}),  we have, for $\gamma \ge 1$,
\[
\int_{T/2}^T \gamma |y^*_x(t, L) \eta |^2  + |y^*_{tx} (t, L)|^2  \diff t \ge C \int_0^L
\gamma |y^*(T, x)|^2 + |y^*_t(T, x)|^2 \diff x,
\]
where in the last inequality, we used the fact that  if $\psi \in \M^\perp$ then $\psi''' +
\psi' $ is also in $\M^\perp$ (this can be proved through integration by part arguments; recall
that $\M^\perp$ is defined via $L^2(0, L)$-scalar product). In other words,
\begin{equation}\label{lem-HUM-p1}
\int_{T/2}^T \gamma  |y^*_x(t, L)  |^2  + |y^*_{tx} (t, L)|^2  \diff t \ge C \int_0^L
\gamma |\psi|^2 + |\psi''' + \psi'|^2 \diff x.
\end{equation}

Fix  a non-negative function $\eta \in C^1([0, T])$ such that $\eta = 1$ in $[T/2, T]$ and $\eta = 0$ in $[0,
T/3]$. Since
\[
\int_0^L \gamma |\psi|^2 + |\psi''' + \psi'|^2 \diff x = \int_0^L \gamma |\psi|^2 +
|\psi'''|^2  + |\psi'|^2 + 2 \psi''' \psi' \diff x,
\]
and, for all $\eps > 0$,
\[
\int_{0}^L |\psi'|^2 \, dx \le \int_{0}^L   \eps |\psi'''|^2 + C_\eps |\psi|^2 \, d x,
\]
it follows that, for large $\gamma$,
\begin{equation}\label{lem-HUM-p1-1}
\int_0^L \gamma |\psi|^2 + |\psi''' + \psi'|^2 \diff x \ge  C \|\psi \|_{H^3(0, L)}^2.
\end{equation}
We have
\[
\int_0^T |y^*_x (t, L) y^*_{tx} (t, L)| \, dt  \le  \int_0^T \eps^{-1}|y^*_x|^2 + \eps |
y^*_{tx}|^2 \, dt  \le C \int_0^L  \eps^{-1}|\psi|^2 + \eps |\psi''' + \psi'|^2 \, dx. 
\]
Here in the last inequalitiy, we applied \cite[(58) in the proof of Proposition
3.7]{Rosier97}  (see also \cite[Proposition 2]{Cerpa07}) to $y^*$ and $y^*_t$.  It follows
from \eqref{lem-HUM-p1} and \eqref{lem-HUM-p1-1},   for $\gamma$ large enough, that
\begin{equation}\label{lem-HUM-p1-2}
\int_{0}^T \gamma \eta (t)  |y^*_x(t, L)|^2  + y^*_{tx} (t, L) \big( \eta y^*_x(t, L)
\big)_t    \diff t \ge C_\gamma \|\psi \|_{H^3(0, L)}^2.
\end{equation}

For a given $\varphi \in \M^\perp_1$, by the Lax-Milgram's theorem and \eqref{lem-HUM-p1-2}, there exists a unique $\Phi \in \M^\perp_1$ such that
\begin{equation}\label{lem-HUM-p1-3}
\int_{0}^L \gamma \varphi \psi + (\varphi''' + \varphi') (\psi''' + \psi') \, dx 
= \int_0^T  \gamma y^*_x  \eta  Y^*_x + y^*_{tx} ( \eta Y^*_x)_t  \, dt  \quad \forall \,  \psi \in \M^\perp_1,
\end{equation}
where $Y^*$ is the solution of \eqref{Sys-KdV-BW} with $\psi = \Phi$. 

Let $y \in X$ be the solution of \eqref{Sys-KdV-LN} with  $u (\cdot) = \cL_1(\varphi) = \eta  (\cdot) Y^*_x(\cdot, L)$. Then, by integration by parts,
\begin{equation}\label{lem-HUM-p2}
\int_{0}^L \gamma \psi y(T, \cdot) + (\psi''' + \psi') \big(y_{xxx}(T, \cdot) + y_x(T, \cdot) \big)  \, dx \\[6pt]
= \int_0^T  \gamma y^*_x  \eta  Y^*_x + y^*_{tx} ( \eta Y^*_x)_t  \, dt  \quad \forall \,  \psi \in \M^\perp_1.
\end{equation}
From \eqref{lem-HUM-p1-3} and \eqref{lem-HUM-p2}, we obtain
\[
\int_{0}^L \gamma \varphi \psi + (\varphi''' + \varphi') (\psi''' + \psi') = \int_{0}^L \gamma \psi y(T, \cdot) + (\psi''' + \psi') (y_{xxx}(T, \cdot) + y_x(T, \cdot)) \quad \forall \,  \psi \in \M^\perp_1.
\]
Since $y$ and $Y^*$  satisfies  system \eqref{Sys-KdV-LN} with the same $u$ for $t \in [T/2, T]$, it follows that $y(t, \cdot) - Y^*(t, \cdot) \in \M$ for  $t \in [T/2, T]$. In particular, $y(T, \cdot) \in \M^\perp_1$ since $Y^*(T, \cdot) \in \M^\perp_1$. 
Combining this with the fact that $\varphi \in  \M^\perp_1$, 
we then derive from \eqref{lem-HUM-p1-1} that
\[
y (T, \cdot) = \varphi.
\]

The conclusion for $2T$ (instead of $T$) is now  as follows. Fix $\zeta \in C^1([0,  2 T])$ with $\zeta(2 T) = 1$ and $\zeta(t) = 0$ for $t \le 5T/4$. For  $\varphi \in \M^\perp$, let $\ty^*$ be the unique solution of
\begin{equation*}\left\{
\begin{array}{cl}
\ty^*_t (t, x) + \ty^*_x (t, x) + \ty^*_{xxx} (t, x) = 0 &  \mbox{ for } t
\in (T, 2T), \, x \in (0, L), \\[6pt]
\ty^*(t, x=0) = \ty^*(t, x=L) = 0 & \mbox{ for } t \in (T, 2T), \\[6pt]
\ty^*_x(t , x= 0) =  \varphi_x(2T, 0) \zeta (t) & \mbox{ for } t \in (T, 2T),\\[6pt]
\ty^*(2T, \cdot) = \varphi.
\end{array}\right.
\end{equation*}
One can check that $\ty^*(T, \cdot) \in \M^\perp_1$. Set
\begin{equation}
\cL(\varphi) (t) = \left\{\begin{array}{cl} \ty^*_x(t, L) & \mbox{ for }  t \in (T, 2 T), \\[6pt]
\cL_1(\ty^*(T, \cdot)) (t) & \mbox{ for } t \in (0, T).
\end{array} \right.
\end{equation}
It is clear that $\cL(\varphi)  \in H^1(0, 2 T)$ since $\ty_x(\cdot, L) \in H^1(T, 2 T)$, $\cL_1(\ty^*(T, \cdot)) \in H^1(0, T)$, and $\cL_1(\ty^*(T, \cdot)) (T) = \ty^*_x (T, L)$,  and  that the corresponding solution at the time $2T$ is $\varphi$. The proof is complete.
\end{proof}

For $r > 0$ and an element $e \in Y$,  we denote $B_r(e)$ the ball in $Y$ centered at $e$ with radius $r$, and $\overline{B_r(e)}$ its closure in $Y$.  The second lemma is a consequence of the power series method and the information derived in \Cref{sect-dir,sect-NL-KdV}.

\begin{lemma} \label{lem-CP} Let $L > 0$, and  $k, l \in \N$. Let $p$ be defined by \eqref{def-p}. Assume  that  \eqref{def-L} holds,   $2k + l \not \in 3 \N$,  and the dimension of $\M$ is 2.  Let $T> \pi/ p$ and $0 < c_1 < c_2$. Fix $\varphi \in \M$ with $c_1 \le \| \varphi \|_{Y}  \le c_2$. There exist  a constant $0< c_3 < c_1/2$, and two maps $U_1: B_{c_3}(\varphi) \to H^1(0, T)$ and $U_2: B_{c_3}(\varphi) \to H^1(0, T)$ such that for $\psi \in B_{c_3}(\varphi) $,  $U_1(\varphi)(0) = U_2(\varphi)(0) = 0$, and  the unique solutions $y_1$ and $y_2$ in $X$ of the following two systems, with $u_1 = U_1(\varphi)$ and $u_2 = U_2(\varphi)$,
\begin{equation}\label{Sys-y1}\left\{
\begin{array}{cl}
y_{1, t} (t, x) + y_{1, x} (t, x) + y_{1, xxx} (t, x) = 0 &  \mbox{ for } t \in
(0, T), \, x \in (0, L), \\[6pt]
y_1(t, x=0) = y_1(t, x=L) = 0 & \mbox{ for } t \in (0, T), \\[6pt]
y_{1, x}(t , x= L) = u_1(t) & \mbox{ for } t \in (0, T),\\[6pt]
y_{1}(t = 0 , \cdot) = 0 & \mbox{ for } t \in (0, T),
\end{array}\right.
\end{equation}
\begin{equation}\label{Sys-y2} \left\{
\begin{array}{cl}
y_{2, t} (t, x) + y_{2, x} (t, x) + y_{2, xxx} (t, x) + y_1 (t,x) y_{1, x} (t, x)  = 0 &
\mbox{ for } t \in (0, T), \, x \in (0, L), \\[6pt]
y_2(t, x=0) = y_2(t, x=L)  = 0 & \mbox{ for } t \in (0, T), \\[6pt]
y_{2, x}(t , x= L) = u_2(t) & \mbox{ for } t \in (0, T), \\[6pt]
y_{1}(t = 0 , \cdot) = 0 & \mbox{ for } t \in (0, T),
\end{array}\right.
\end{equation}
satisfy
\[
y_1(T, \cdot) = 0 \quad \mbox{ and } \quad y_2(T, \cdot) = \psi.
\]
Moreover, for $\psi, \,  \widetilde \psi \in B_{c_3}(\varphi)$,
\begin{equation}\label{lem-CP-U1}
\| U_1(\psi) - U_1(\widetilde \psi) \|_{H^1(0, T)} \le C \|\psi - \widetilde \psi \|_{Y}
\end{equation}
and
\begin{equation}\label{lem-CP-U2}
\| U_2(\psi) - U_2(\widetilde \psi) \|_{H^1(0, T)} \le C \|\psi - \widetilde \psi \|_{Y},
\end{equation}
for some positive constant $C$ depending only on $L$, $T$,  $c_1$, and $c_2$.
\end{lemma}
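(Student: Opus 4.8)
The plan is to follow the power series expansion method, using $u_1$ to prescribe the component of $y_2(T,\cdot)$ along $\M$ and $u_2$ to prescribe the component along $\M^\perp$. Decompose $\psi = P_{\M}\psi + P_{\M^\perp}\psi$ and write the solution of \eqref{Sys-y2} as $y_2 = z + w$, where $z \in X$ solves \eqref{Sys-y2} with $u_2$ replaced by $0$ (so that $z$ carries the quadratic forcing $-y_1 y_{1,x}$ and depends only on $u_1$), and $w \in X$ solves the homogeneous linear KdV system with boundary control $u_2$ and zero initial datum. By \Cref{lem-HUM}, choosing $u_2 = \cL(\chi)$ with $\chi \in \M^\perp$ yields $w(T,\cdot) = \chi$, hence $P_{\M} w(T,\cdot) = 0$, $P_{\M^\perp} w(T,\cdot) = \chi$, and $u_2(0)=0$. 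Thus the condition $y_2(T,\cdot) = \psi$ splits into the two decoupled requirements $P_{\M} z(T,\cdot) = P_{\M}\psi$ (to be solved in $u_1$) and $\chi = P_{\M^\perp}\psi - P_{\M^\perp} z(T,\cdot)$ (which then defines $u_2$).

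The core step is the construction of $U_1$. I would restrict attention to controls $u_1$ that steer $0$ to $0$ for the linear KdV system, so that $y_1(T,\cdot)=0$ and, automatically, $u_1(0)=y_{1,x}(0,L)=0$. For such $u_1$, multiplying the equation of $z$ by $\Psi$, and by the second basis direction of $\M$, and integrating as in \eqref{thm-NL-B}, the quantity $P_{\M} z(T,\cdot)$ is expressed through the quadratic functional $\int_0^T\int_0^L y_1^2(t,x)\,\Psi_x(t,x)\,\diff x\,\diff t$ and its analogue. Hence the map $\Lambda : u_1 \mapsto P_{\M} z(T,\cdot) \in \M \cong \R^2$ is a continuous quadratic map. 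The crucial point, which is the large-time counterpart of the small-time obstruction of \Cref{corollary-dir}, is that for $T > \pi/p$ these functionals are no longer sign-definite: the oscillatory weight $e^{-ipt}$ appearing in \eqref{def-Psi} changes phase over a half period $\pi/p$, so that every orientation in $\M$ becomes attainable and $\Lambda$ is onto a full neighborhood of $\varphi$. Concretely, I would produce a reference control $\bar u_1$ (steering $0$ to $0$, with $\bar u_1(0)=0$) such that $\Lambda(\bar u_1)=\varphi$ and such that the derivative $v \mapsto D\Lambda(\bar u_1)v = 2\,P_{\M}\,\delta z(T,\cdot)$, where $\delta z$ carries the bilinearized forcing $-(\bar y_1\,\delta y_1)_x$, is onto $\M$. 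The implicit function theorem then yields a $C^1$, hence locally Lipschitz, map $U_1 : B_{c_3}(\varphi) \to H^1(0,T)$ with $U_1(\varphi)=\bar u_1$, $U_1(\psi)(0)=0$, and $P_{\M} z(T,\cdot) = P_{\M}\psi$ for all $\psi \in B_{c_3}(\varphi)$, for $c_3 < c_1/2$ small enough; this gives \eqref{lem-CP-U1}.

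With $u_1 = U_1(\psi)$ fixed and $z=z(\psi)$ the corresponding solution, I would then simply set
\[
U_2(\psi) = \cL\big(P_{\M^\perp}\psi - P_{\M^\perp} z(\psi)(T,\cdot)\big),
\]
which lies in $H^1(0,T)$ with $U_2(\psi)(0)=0$ by \Cref{lem-HUM}. The estimate \eqref{lem-CP-U2} then follows from the continuity of the linear map $\cL$, the fact that $z(\psi)(T,\cdot)$ depends locally Lipschitz on $u_1$ (being quadratic in $y_1$, with the well-posedness bounds of \Cref{lem-Bona03} applied to the forcing $-y_1 y_{1,x}\in W^{1,1}([0,T];L^2(0,L))$), and the Lipschitz bound \eqref{lem-CP-U1}. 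Throughout, \Cref{lem-Bona03} guarantees $y_1,y_2\in X$ with controls in $H^1$.

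The main obstacle is establishing that $\Lambda$ is surjective and that its derivative is nondegenerate at $\bar u_1$. Surjectivity requires showing that for $T>\pi/p$ the quadratic functionals $\int_0^T\int_0^L y_1^2\,\Psi_x$ sweep out all of $\M$, not merely the half-cone dictated by the sign of $E$ (which, by \Cref{lem-E}, is nonzero precisely because $2k+l\notin 3\N$); this is exactly where the hypothesis $T>\pi/p$ enters, in sharp contrast with \Cref{corollary-dir}. Proving that $D\Lambda(\bar u_1)$ is invertible — equivalently, exhibiting two perturbations of $\bar u_1$ moving $P_{\M} z(T,\cdot)$ independently in $\M$ — is the delicate analytic point, and I expect it to rest on the resonance structure of the eigenfunctions spanning $\M$ and on the large-time reachability analysis initiated by Cerpa.
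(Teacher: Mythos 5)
Your overall architecture coincides with the paper's: decompose the target into $P_{\M}\psi + P_{\M^\perp}\psi$, reach the $\M^\perp$ part with the HUM-type map $\cL$ of \Cref{lem-HUM}, reach the $\M$ part through the quadratic term generated by a control $u_1$ steering $0$ to $0$, and deduce \eqref{lem-CP-U2} from \eqref{lem-CP-U1} together with \Cref{lem-Bona03}. But there is a genuine gap, and it is exactly the one you flag yourself: the existence of a reference control $\bar u_1$ with $\Lambda(\bar u_1)=\varphi$ and the surjectivity of $D\Lambda(\bar u_1)$ \emph{are} the content of the lemma, since $\varphi$ is an arbitrary direction of $\M$; your implicit-function-theorem scaffolding relocates the difficulty without resolving it. Worse, your heuristic for the threshold (``the phase of $e^{-ipt}$ sweeps a half period'') is not only unproven but, implemented naively, gives the wrong time: by \Cref{pro-monotone}, a $0$-to-$0$ control supported in $[s,s+\tau]$ with $\tau$ small produces a quadratic $\M$-component whose complex coordinate is $\approx N(u)^2 e^{-ips}\big(E+O(\tau^{1/4})\big)$, so time-translation combined with scaling by $\lambda^2\ge 0$ only sweeps an arc of directions of width $p(T-\tau)$; covering all of $\M$ this way would require $T>2\pi/p$, not $T>\pi/p$.

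The paper fills this gap with an explicit construction rather than an inverse/implicit function argument. By \Cref{lem-E} and \Cref{corollary-dir}, for every $\tau>0$ there is $v_1\in H^2_0(0,\tau)$ steering $0$ to $0$ whose quadratic state (with $u_2=0$) has a nonzero $\M$-component, essentially in the direction $E$. The ``rotations'' the paper invokes (the proof of Proposition 13 in \cite{Cerpa07}) then cover all directions in time barely above $\pi/p$: translating $v_1$ by $s$ rotates the reached complex coordinate by $e^{-ips}$, and superposing two translates with disjoint time supports, $u_1=\lambda v_1+\mu v_1(\cdot-s)$ with $s\ge\tau$, creates no cross terms in $y_1^2$, so the reached coordinate is $\big(\lambda^2+\mu^2 e^{-ips}\big)Q_0$. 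The reachable set is therefore the convex cone generated by an arc of width $p(T-2\tau)$, and such a cone is \emph{all} of $\M$ as soon as that width exceeds $\pi$; this is precisely where $T>\pi/p$, $\dim\M=2$, and the freedom to take $\tau$ arbitrarily small enter. This delay/amplitude parametrization is Lipschitz in the target as long as the target stays away from $0\in\M$ (whence the requirement $c_3<c_1/2$), which yields \eqref{lem-CP-U1}, and $U_1(\psi)(0)=0$ holds automatically because $v_1\in H^2_0$. Without this ingredient --- or an equivalent proof that every direction of $\M$ is quadratically attainable in time $T>\pi/p$ --- your proposal does not prove the lemma.
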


\begin{proof} By \Cref{lem-E} and \Cref{corollary-dir}, for all $\tau > 0$, there exists  $v_1 \in H^2_0(0, \tau)$ such that if $y_1 \in X$ is the solution of \eqref{Sys-y1} with $u_1 = v_1$ and $y_2 \in X$ is the solution of \eqref{Sys-y2} with $u_2 = 0$ then
\[
y_2 (\tau, \cdot) \in \M \setminus \{0 \}.
\]
Since $c_3$ is small, $\dim \M = 2$, and $v_1 \in H^2_0(0, L)$, by using rotations (see also \cite[the proof of Proposition 13]{Cerpa07}) there exists $U_1(\psi)$ with $U_1(\psi) (0) = 0$ satisfying \eqref{lem-CP-U1} such that if
$y_1 \in X$ is the solution of \eqref{Sys-y1} with $u_1= U_1(\psi)$ and $\hy_2 \in X$ is the solution of \eqref{Sys-y2} with $u_2 = 0$ then
\[
\hy_2 = P_{\M} \psi.
\]
We then choose
\[
u_2 = \cL (\hy_2 - P_{\M} \psi) ,
\]
where $\cL$ is a map given by \Cref{lem-HUM}.
\end{proof}

We are ready to give the

\begin{proof}[Proof of \Cref{thm-CP}]

Fix $y_0, y_T \in Y$ with small norms. For simplicity of the presentation, we will assume that $\| y_0 \|_Y \le \| y_T \|_Y$ (the other case also follows from this case by e.g. reversing the time: $t \to T - t$ and noting that $y_x(\cdot, 0)$ is in $H^1(0, T)$; this can be derived by considering  the equation for $y_t$ \footnote{The compatibility condition is automatic.}).
Set $\rho = \| y_T\|_Y$ and assume that $\rho >0$ otherwise, one just takes the zero control and the conclusion  follows.

Let $w_0$ be the state at the time $T$ of the solution of the linear system \eqref{Sys-KdV-LN} with the zero control starting from $P_{\M} y_0$ at the time 0.  We first consider the case where
\begin{equation}\label{CP-rem}
\|P_{\M} y_T -  w_0 \|_{H^2(0, L)} \ge 2 c   \rho,
\end{equation}
for some small constant $c$ independent of $\rho$ and defined later.

Set
\[
\begin{array}{cccc}
 \hcG  \colon & Y \cap B_{c \rho} (y_T) & \to & H^{1}(0, T) \\[6pt]
& \varphi & \mapsto & \rho \bu_0 + \rho^{1/2} u_1 + \rho u_2.
\end{array}
\]
Here we decompose $\varphi$ as
\[
\varphi = P_{\M^\perp} \varphi  + P_{\M} \varphi,
\]
$\bu_0  \in H^1(0, T)$ is a control for which the corresponding solution $\by_0$ in $X$ of the linear system \eqref{Sys-KdV-LN} starting from $P_{\M^\perp} y_0 / \rho$ at $0$ and arriving  $P_{\M^\perp} \varphi / \rho$ at the time $T$, and  $u_1$ and $u_2$ are  controls for which the solutions $y_1 \in X$ and $y_2 \in X$ of the system \eqref{Sys-y1} and \big(\eqref{Sys-y2} with the initial data $P_{\M} y_0/ \rho$ instead of $0$\big) satisfies $y_1(T, \cdot) = 0$ and $y_2(T, \cdot) = P_{\M} \varphi / \rho $. Moreover, by \Cref{lem-HUM}, one can choose $\bu_0$ in such a way that $\bu_0 = \bu_0 (\varphi)$
is a Lipschitz function of $\varphi$ with  the Lipschitz constant  bounded by a positive constant independent of $\rho$,  and by \Cref{lem-CP}
one can choose $u_1 = u_1(\varphi)$ and  $u_2 = u_2 (\varphi)$ as  Lipschitz functions of $P_{\M} \varphi / \rho$ with  the Lipschitz constants  bounded by positive constants independent of $\rho$.

Set
\[
\begin{array}{cccc}
\cP\colon  &  \Big\{ w \in H^{1}(0, T); w(0) = y_0'(L) \Big\}   & \to & H^3(0, L) \\[6pt]
& w & \mapsto & y(T, \cdot),
\end{array}
\]
where $y \in X$ is the unique solution of the nonlinear system \eqref{intro-sys-KdV} with $u = w$ starting from $y_0$ at time $0$. Consider the map
\[
\begin{array}{cccc}
\Lambda\colon  &  Y \cap \overline{B_{c \rho} (y_T)} & \to & Y \\[6pt]
& \varphi & \mapsto & \varphi - \cP \circ \hcG (\varphi) +  y_T. 
\end{array}
\]
We will prove that
\begin{equation}\label{CP-claim1}
\Lambda (\varphi) \in \overline{B_{c \rho} (y_T)},
\end{equation}
and
\begin{equation}\label{CP-claim2}
\|\Lambda (\varphi) - \Lambda (\phi) \|_{Y} \le \lambda \| \varphi - \phi\|_Y,
\end{equation}
for some $\lambda \in (0, 1)$. Assuming this, one derives from the contraction mapping  theorem that there exists a unique $\varphi_0 \in Y \cap \overline{B_{c \rho} (y_T)} $ such that $\Lambda (\varphi_0) = \varphi_0$.  As a consequence,
\[
y_T = \cP \circ \hcG (\varphi_0),
\]
and $\hcG(\varphi_0)$ is hence a required control.

We next establish \eqref{CP-claim1} and \eqref{CP-claim2}. Indeed, assertion \eqref{CP-claim1} follows from the fact
\[
\|\varphi -  \cP \circ   \hcG (\varphi) \|_{Y} \le C \| \varphi \|_{Y}^{3/2} \mbox{ for } Y \cap \overline{B_{\rho/2} (y_T)}.
\]
This can be proved using the approximation via the power series method as follows. Set \footnote{The index $a$ stands the
approximation.}

\[
u = \rho \bu_{0} + \rho^{1/2} u_{1} + \rho u_{2}  \quad \mbox{ and } \quad y_a = \rho \by_0 + \rho^{1/2} y_1 + \rho y_2.
\]
Let $y \in X$ be the  solution of the nonlinear KdV system \eqref{intro-sys-KdV} with $y(t=0, \cdot) = y_0$ and with $u$ defined above. Then
\[
(y - y_a)_t + (y - y_a)_x + (y - y_a)_{xxx} + y y_x - y_a y_{a, x} =  f(t, x),
\]
where
\[
- f(t, x) = \rho^{3/2}  (y_1 y_2)_x   + \rho^2  y_2 y_{2, x}  + \rho^2 \by_0 \by_{0, x}  + \rho^{3/2} \Big( \by_0 (y_1 + \rho^{1/2} y_2) \Big)_x.
\]
Since
\[
y y_x - y_a y_{a, x} = (y- y_a) y_x + y_a (y_x - y_{a, x}),
\]
applying \Cref{lem-Bona03}, we obtain, for small $\rho$,
\begin{equation}\label{CP-yya}
\|y- y_a\|_X \le C \| f \|_{W^{1, 1} \big( (0, T); L^2(0, L) \big)} \le  C \rho^{3/2}.
\end{equation}
Assertion~\eqref{CP-claim1} follows since $y(T, \cdot) = \cP \circ \hcG (\varphi)$ and $y_a(T, \cdot) = \varphi$.

\medskip
We next establish \eqref{CP-claim2}. To this end, we estimate
\[
 \Big( \varphi -  \cP \circ \hcG (\varphi) \Big) - \Big( \tvarphi -  \cP \circ \hcG (\tvarphi) \Big).
\]
Denote $\tbu_{0}, \tu_{1}, \tu_{2}, \tu$ and $\tby_{0}, \ty_{1}, \ty_{2}, \ty_a, \ty$ the  functions corresponding to $\tvarphi$ which are defined in the same way as the functions  $\bu_{0}, u_{1}, u_{2}, u$ and $\by_{0}, y_{1}, y_{2}, y_a, y$ defined  for  $\varphi$.

We have
\[
(y - \ty)_{t} + (y - \ty)_{x} + (y - \ty)_{xxx} + y y_x - \ty \ty_x = 0,
\]
\[
(y_a - \ty_a)_{t} + (y_a - \ty_a)_{x} + (y_a - \ty_a)_{xxx} + y_a y_{a, x} - \ty_a \ty_{a,x} =  g (t, x),
\]
where
\begin{multline}
 g (t, x) = \rho^{3/2}  \Big( (y_1 y_2)_x - (\ty_1 \ty_2)_x \Big)  + \rho^2 \Big( y_2 y_{2, x} - \ty_2 \ty_{2, x} \Big) + \rho^2 \Big(\by_0 \by_{0, x} - \tby_0 \tby_{0, x} \Big) \\[6pt]
+ \rho^{3/2} \Big(\by_0 (y_1 + \rho^{1/2} y_2) - \tby_0 (\ty_1 + \rho^{1/2} \ty_2) \Big)_x.
\end{multline}
This implies
\begin{align*}
(y - & y_a - \ty + \ty_a)_{t} +  (y - y_a - \ty + \ty_a)_{x} + (y - y_a - \ty + \ty_a)_{xxx}  \\[6pt] =
& - \Big(  (y-y_a) y_x + y_a (y - y_a)_x - (\ty-\ty_a) \ty_x - \ty_a (\ty - \ty_a)_x + g(t, x) \Big)\\[6pt]
= & - \Big( (y - y_a - \ty + \ty_a) y_x  +  (y_x - \ty_x)(\ty - \ty_a)  + y_a (y - y_a - \ty + \ty_a)_x \\[6pt]
& \quad \quad \quad + (y_a - \ty_a)(\ty - \ty_a)_x  + g(t, x) \Big) \\[6pt]
= & - \Big( (y - y_a - \ty + \ty_a) y_x   + y_a (y - y_a - \ty + \ty_a)_x   + (y_x - y_{a, x} - \ty_x  + \ty_{a, x} )(\ty - \ty_a) 
+ h(t, x) \Big) , 
\end{align*}
where
\[
h(t, x) = g(t, x) + (y_{a, x} - \ty_{a, x})(\ty - \ty_a) +  (y_a - \ty_a)(\ty - \ty_a)_x. 
\]

Using \Cref{lem-Bona03}, we  derive that, for $\rho $ small,
\begin{equation}\label{CP-coucou}
\| y - y_a - \ty + \ty_a \|_{X}  \le C \| h(t, x) \|_{W^{1,1}\big((0, T); L^2(0, L) \big)}.
\end{equation}
We have
\begin{equation*}
\| (y-y_a, \ty - \ty_a) \|_X \mathop{\le}^{\eqref{CP-yya}} C \rho^{3/2},  \; \; 
\|y_a - \ty_a\|_X \le C \rho^{-1/2} \|\varphi - \tvarphi \|_Y, 
\end{equation*}
and
\[
\| g(t, x) \|_{W^{1,1}\big((0, T); L^2(0, L) \big)}  \le C \rho^{1/2} \| \varphi - \tvarphi \|_Y.
\]
It follows that
\begin{equation}
\| h(t, x) \|_{W^{1,1}\big((0, T); L^2(0, L) \big)} \le C \rho^{1/2} \| \varphi - \phi \|_Y, 
\end{equation}
which yields, by \eqref{CP-coucou}, 
\[
\| (y - y_a - \ty + \ty_a) (T, \cdot) \|_{Y} \le C \rho^{1/2} \| \varphi - \phi \|_Y. 
\]
Assertion~\eqref{CP-claim2} follows.

We next consider the case $\|P_{\M} y_T - w_0 \|_{H^3(0, L)} \le 2 c  \| y_T \|_{H^3(0, L)} $.  In fact,  one can bring this case to the previous case as follows. Fix $\eps > 0$ small. By \Cref{lem-E} and \Cref{corollary-dir},  there exists  $v_1 \in H^2_0(0, \eps)$ such that if $y_1 \in X$ (with $T = \eps$) is the solution of \eqref{Sys-y1} with $u_1 = v_1$ and $y_2 \in X$ is the solution of \eqref{Sys-y2} with $u_2 = 0$ then
\[
y_2 (\eps, \cdot) \in \M \setminus \{0 \}.
\]
Let $u_{0, T}$, $u_{1, T}$, $u_{2, T}$ be such that $u_{0, T}$ is 
 a control for which the corresponding solution in $X$ of the linear system \eqref{Sys-KdV-LN} starting from $y_T(L - \cdot) / \rho$ at $0$ and arriving  $0$ at the time $\eps$,  $u_{1, T} = \gamma v_1$, $u_{2, T} = \gamma^2 v_2$ for some $\gamma > 0$ defined later. Let $\by$ be the unique solution of the nonlinear KdV system in the time interval $[T, T+ \eps]$ using the control 
 $$
 \rho u_0 ( \cdot - T) + \rho^{1/2} u_1 (\cdot - T) + \rho u_2 (\cdot - T), 
 $$
with $\by (T, \cdot) = y_T(L  - \cdot )$. By choosing $\gamma$ large enough, $y_0$ and $\by (T + \eps, L - \cdot )$  satisfy the setting of the previous case for the time interval $[0, T + \eps]$ (instead of [0, T]). One now considers the control (for the nonlinear KdV system) in the time interval $[0, T + 2\eps]$ which is equal to the one which brings $y_0$ at the time 0 to $\by(T + \eps, L - \cdot)$ at the time  $T+ \eps$ obtained in the previous case in the time interval $[0, T+\eps]$,  and is equal to $- \by_x\big(2(T + \eps) - t, 0 \big)$ for $t \in [T+ \eps, T+ 2 \eps]$. It is clear that the solution of 
the nonlinear KdV system  at the time $T + 2 \eps$ is $y_T$.  The proof is complete by changing $T + 2 \eps$ to $T$. 
\end{proof}

\begin{remark}\label{Rem-H2/3} Similar result as the one in \Cref{thm-CP} also holds for  $y_0, y_T \in
H^2(0, L) \cap H^1_0(0, L)$ and $u \in H^{2/3}(0, T)$. More precisely, one has the
following result. Let $L > 0$, and  $k, l \in \N$. Let $p$ be defined by \eqref{def-p}.
Assume  that  \eqref{def-L} holds,   $2k + l \not \in 3 \N$,  and the dimension of $\M$ is
2.  Given  $T> \pi/ p$,  there exists $\eps_0 > 0$ such that for $y_0, y_T \in H^2(0, L)
\cap H^1_0(0, L)$ with
\[
\| (y_0, y_T) \|_{H^2(0, L)} \le \eps_0,
\]
there exists $u \in H^{2/3}(0, T)$ such that  $u(0) = y_0'(L)$,
\[
\| u\|_{H^{2/3}(0, T)} \le C  \| (y_0, y_T) \|_{H^2}^{1/2},
\]
and the corresponding solution  $y \in C\big([0, T]; H^2 (0, L) \big) \cap L^2\big((0, T); H^3[0, L]) \big) $ of the nonlinear system \eqref{intro-sys-KdV} with $y(t = 0, \cdot) = y_0$ satisfies $y(t = T, \cdot) = y_T$. This is complementary  to \Cref{thm-NL}. The only important modification in comparison with the proof of \Cref{thm-CP} is \Cref{lem-HUM}. Nevertheless, the method presented in its proof can be extended to cover the setting mentioned here (initial and final datum in $H^2(0, L) \cap H^1_0(0, 1)$ and controls in $H^{2/3}(0, T)$). We also have 
\begin{equation}\label{Rem-H2/3-p1}
\| y_x(\cdot, 0)\|_{H^{2/3}(0, T)} \le C \Big(  \|y(0, \cdot) \|_{H^2(0, L)} + \| y_x(\cdot, L) \|_{H^{2/3}(0, T)} \Big), 
\end{equation}
for solutions $y \in C\big([0, T]; H^2 (0, L) \big) \cap L^2\big((0, T); H^3[0, L]) \big)$ of \eqref{intro-sys-KdV} with small norm. Assertion~\eqref{Rem-H2/3-p1} would follow from \cite{Bona03} applied to $s=2$. Here is another way to see it. 
Split  $y$ into two parts $y_1$ and $y_2$ where $y_1$ is the solution of the linearized system with zero initial data and $y_{1, x} (\cdot, L) = y_x(\cdot, L)$.  As in the proof of \Cref{lem-kdv1}, one can prove 
\begin{equation}\label{Rem-H2/3-p2}
\| y_{1, x} (\cdot, 0)\|_{H^{2/3}(0, T)} \le C  \| y_x(\cdot, L) \|_{H^{2/3}(0, T)}. 
\end{equation}
Concerning $y_2$, by considering $y y_x$ as a source term, similar to the proof of \Cref{lem-kdv3},  one can prove 
\begin{equation}\label{Rem-H2/3-p3}
\| y_{2, x} (\cdot, 0)\|_{H^{2/3}(0, T)} \le C \Big(  \|y(0, \cdot) \|_{H^2(0, L)} +  \| y y_x \|_{L^2\big((0,T); H^2(0, L) \big)} \Big).  
\end{equation}
Since 
\begin{multline*}
\| y y_x \|_{L^2\big((0,T); H^2(0, L) \big)} \le C \| y \|_{C\big([0, T]; H^2 (0, L) \big) \cap L^2\big((0, T); H^3[0, L]) \big)}^2  \quad (\mbox{by the embedding theorem}) \\[6pt]
\le  C \Big(  \|y(0, \cdot) \|_{H^2(0, L)} + \| y_x(\cdot, L) \|_{H^{2/3}(0, T)} \Big)^2 \quad (\mbox{by \cite[Theorem 3.4]{Bona03} applied to $s=2$}), 
\end{multline*}
assertion \eqref{Rem-H2/3-p1} follows from  \eqref{Rem-H2/3-p2} and  \eqref{Rem-H2/3-p3}. Therefore, the arguments using the backward  systems also work in this case. 
\end{remark}

\begin{remark} The proof given in \Cref{thm-CP} can be extended easily to the case $L
\not \in \cN$ to yield the small-time local controllability of \eqref{intro-sys-KdV} with
initial final and initial datum in $H^3(0, L) \cap H^1_0(0, L)$ (resp. $H^{2}(0, L) \cap
H^1_0(0, L)$)
 and controls in $H^1(0, T)$ (resp. $H^{2/3}(0, T)$).
\end{remark}

\begin{remark}  Let $L \in \cN$. Assume that
$\dim \M$ is pair and for all $(k, l) \in \N^2$ with $k > l \ge 1$ and $L = \frac{1}{2 \pi}\sqrt{\frac{k^2 + l^2 + k l}{3}}$, it holds $2 k + l \not \in  3\N $. Then,
using the same method in the proof of \Cref{thm-CP}, and involving the ideas in \cite{CC09}, one can prove that the system \eqref{intro-sys-KdV} and \eqref{intro-IC-KdV}  is controllable at the time given in \cite{CC09}.
\end{remark}

\begin{remark} The mappings $\hcG$ and  $\Lambda$ have their roots in \cite{CC04} (see
also \cite{Cerpa07}). 
\end{remark}

\begin{remark}
\Cref{lem-HUM} is motivated by the Hilbert Uniqueness Method and inspired by the construction of smooth controls (for different contexts, e.g. the context of the wave equation) in \cite{EZ10}.  The function $\eta$ used there is inspired from  \cite{EZ10}. Nevertheless, we cannot take $\eta = 0$ near $T$ as in \cite{EZ10}.  We also add a large parameter $\lambda$ in the proof.
\end{remark}

\begin{remark}
In the proof of \Cref{lem-CP}, we use essentially the fact that  for all $\tau > 0$, there exists  $v_1 \in H^2_0(0, \tau)$ such that if $y_1 \in X$ is the solution of \eqref{Sys-y1} with $u_1 = v_1$ and $y_2 \in X$ is the solution of \eqref{Sys-y2} with $u_2 = 0$ then
\[
y_2 (\tau, \cdot) \in \M \setminus \{0 \}.
\]
This is a consequence of \Cref{lem-E} and \Cref{corollary-dir}. It is not clear for us how to use a contradiction argument as in \cite{CC04, Cerpa07, CC09} to obtain such a function $v_1$. This is why we cannot implement the strategy in \cite{CC04, Cerpa07, CC09} to derive the local controllability for initial and final datum in $H^3(0,L) \cap H^1_0(0, L)$ with controls in $H^1(0, T)$ for all critical lengths and for small time when $\dim \M = 1$ and for finite time otherwise.
\end{remark}

\begin{remark}  We emphasize that the way to implement the fixed point argument for
$\Lambda$ presented in this paper is somehow different from the one in \cite{Cerpa07}. We only apply  the fixed point arguments once  instead of twice, first for
$P_{\M^\perp} \Lambda$ and then for $P_{\M} \Lambda$ as in \cite{Cerpa07}. The Brouwer fixed point theorem is not required in our analysis.
\end{remark}

\appendix

\section{On symmetric functions of the roots of a
polynomial}\label{sec:symmetric_holomorphic}
This is standard for people knowing algebraic functions~\cite[Ch.~8 \S2]{Ahlfors78}, but
for the sake of completeness, we justify that an analytic symmetric function of the roots
$\lambda_j(z)$ of $\lambda^3 + \lambda + iz = 0$ is an entire function.
\begin{lemma}\label{pro-S}
  Let $(\lambda_1(z),\lambda_2(z),\lambda_3(z))$ be the three roots of $\lambda^3 + \lambda +
i z = 0$. Let $F\colon \C^3 \to \C$ be holomorphic in $\C^3$ and
symmetric, i.e., for every permutation $\sigma\in \mathfrak S_3$,
$F(z_{\sigma(1)},z_{\sigma(2)},z_{\sigma(3)}) = F(z_1,z_2,z_3)$. Then, the function
$G\colon z\in\C\mapsto F(\lambda_1(z),\lambda_2(z),\lambda_3(z))$ is entire.
\end{lemma}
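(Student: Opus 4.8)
The plan is to use the theory of algebraic functions: off the finitely many branch points the symmetric combination $G$ is holomorphic by the implicit function theorem, it is globally single-valued by the symmetry of $F$, and across the branch points it extends by Riemann's removable singularity theorem. First I would introduce the zero set of the discriminant, $\mathcal Z := \{z \in \C : -4 + 27 z^2 = 0\} = \{\pm 2/(3\sqrt 3)\}$, which is exactly the set where two of the roots of $\lambda^3 + \lambda + iz$ coincide; away from $\mathcal Z$ the three roots are simple and distinct. For $z_0 \in \C \setminus \mathcal Z$, since $\partial_\lambda(\lambda^3 + \lambda + iz) = 3\lambda^2 + 1$ does not vanish at any simple root, the holomorphic implicit function theorem provides, on a neighborhood of $z_0$, three holomorphic functions enumerating the roots. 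Because $F$ is symmetric, the value $F(\lambda_1(z),\lambda_2(z),\lambda_3(z))$ does not depend on the chosen enumeration, so $G$ is well defined and holomorphic on $\C \setminus \mathcal Z$. Moreover $G$ is single-valued there: analytic continuation along a loop encircling a point of $\mathcal Z$ can only permute the $\lambda_j$, and $F$ being symmetric is insensitive to such a permutation.

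Next I would show that $G$ extends continuously across each $z_0 \in \mathcal Z$. The roots of a monic polynomial depend continuously, as an unordered triple, on its coefficients; here this continuity is made explicit by the Puiseux expansions \eqref{lem-hol-lambda}, in which $\lambda_1$ and $\lambda_2$ collide at $\mp i/\sqrt 3$ while $\lambda_3 \to \pm 2i/\sqrt 3$ as $\eps \to 0$. Since $F$ is continuous and symmetric, $G(z)$ converges to the value of $F$ at the limiting (repeated) triple as $z \to z_0$; in particular $G$ is bounded in a punctured neighborhood of each point of $\mathcal Z$.

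Finally, $\mathcal Z$ consists of two isolated points, and $G$ is holomorphic on $\C \setminus \mathcal Z$ and bounded near each of them, so Riemann's removable singularity theorem upgrades $G$ to an entire function. I expect the only genuine obstacle to be the passage across $\mathcal Z$, where one must rule out both multivaluedness and a pole: multivaluedness is excluded by the symmetry of $F$ together with the fact that monodromy acts on the roots by permutations, and the absence of a pole follows from the boundedness furnished by the continuity of the roots recorded in \eqref{lem-hol-lambda}.
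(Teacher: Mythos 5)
Your proposal is correct and takes essentially the same route as the paper's proof: the implicit function theorem off the discriminant locus $\{\pm 2/(3\sqrt 3)\}$ combined with the symmetry of $F$ to get a well-defined holomorphic $G$ there, and continuity of the unordered triple of roots at the two exceptional points to conclude. The only cosmetic differences are that you invoke Riemann's removable singularity theorem explicitly (the paper leaves this step implicit after establishing continuity) and justify the continuity of the roots via the Puiseux-type expansions \eqref{lem-hol-lambda} rather than Cardano's formula.
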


Note that the ordering $\lambda_1(z),\lambda_2(z),\lambda_3(z)$ is not unique (and we
could prove that we cannot chose an ordering that makes any of the $\lambda_j$ entire), but
since $F$ is symmetric, the value $F(\lambda_1(z),\lambda_2(z),\lambda_3(z))$ does not
depend on the ordering.

\begin{proof} Note that, for $z_0 \neq  \pm 2/ (3 \sqrt{3})$, the discriminant of
$X^3+X+iz$ is nonzero, and thus the roots of $X^3+X+i z_0$ are  simple. By the
implicit function theorem,  there exists some complex neighborhood $U$ of $z_0$,  some
neighborhood $V_j$ of $\lambda_j(z_0)$ ($1 \le j \le 3$),  and three
holomorphic functions $\mu_j\colon U\to V_j$ such that $\mu_1(z), \, \mu_2(z), \mu_3(z)$
are the three distinct roots. Since $F$ is symmetric, it follows that  $G(z) =
F(\mu_1(z),\mu_2(z),\mu_3(z))$ and  is therefore analytic  in $U$. Consequently, $G$ is
analytic in $\mC \setminus \{ \pm 2/ (3 \sqrt{3}) \}$.

It suffices then to prove that $G$ is continuous at $\pm 2/3\sqrt 3$. The roots
$\lambda_j(z)$ are continuous, even around at $\pm\sqrt{4/27}$, in the sense that for every $\epsilon>0$, there exists $\delta>0$ such that for every $|z-z_0|<\delta$, there exists some ordering of the $\lambda_{k_j}(z)$ such that
$|\lambda_{k_1}(z) - \lambda_1(z_0)| + \dots + |\lambda_{k_3}(z) -
\lambda_3(z_0)|<\epsilon$ (this can be seen e.g.\ thanks to Cardano's formula). Thus
$G(z)$ is continuous at $z_0 = \pm \sqrt{4/27}$ and $\pm \sqrt{4/27}$. 
\end{proof}


\begin{remark} A variant of \Cref{pro-S} still holds for more general polynomial
equations
$P(z,\lambda) = 0$, but we wanted to avoid some technicalities of such a general
equation. The general case would be a consequence of the fact that the solutions of
$P(z,\lambda)=0$ define a finite number of algebraic functions, see~\cite[Ch.~8
\S2]{Ahlfors78}.
\end{remark}

\section{On the real roots of \texorpdfstring{$H$,  the common roots of $G$ and $H$,  and
the behavior  of $|\det
Q|$}{H, the common roots of G and H, and the beavior of |det Q|}}\label{B}

We begin with

\begin{lemma}\label{lem-detQ} Let $z \in \mR$. We have 
\begin{enumerate}
\item[1)] if $z \neq  \pm 2 / (3 \sqrt{3})$ and $H(z) = 0$,  then, for some $k, l \in \N$ with $1 \le l \le k$,
$L = 2 \pi \sqrt{\frac{k^2 + k l + l^2}{3}}$, and
\begin{equation}\label{lem-detQ-cl1}
z = - \frac{(2k + l)(k-l)(2 l + k)}{3 \sqrt{3}(k^2 + kl + l^2)^{3/2}}.
\end{equation}
Moreover,
\begin{equation}\label{lem-detQ-cl2}
\lambda_1(z) = - \frac{2 \pi i}{3 L} (2k + l), \quad \lambda_2(z) = \lambda_1(z) + \frac{2\pi i}{L} k, \quad \lambda_3 (z) = \lambda_2(z) + \frac{2\pi i}{L} l,
\end{equation}
and $z$ is a simple zero of the equation $H$.

\item[2)] if $z = \pm 2/ (3 \sqrt{3})$ then
\begin{equation}\label{lem-detQ-cl3}
 \lambda_1(z) =
 \mp \frac{i}{\sqrt 3},  \quad
 \lambda_2(z) =
 \mp \frac{i}{\sqrt 3}, \quad
 \lambda_3(z) = \pm \frac{2i}{\sqrt 3},
\end{equation}
 $z$ is not a zero of $H$, and $z$ is a simple solution of the equation $\det Q(z) \Xi 
(z) = 0$. 
\end{enumerate}
\end{lemma}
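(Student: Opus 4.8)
The plan is to handle the two parts separately, reading off the explicit data from the spectral description already available and establishing the two simplicity statements by a direct computation of the relevant first derivatives.

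For part 1, the value \eqref{lem-detQ-cl1} and the fact that $L$ is critical are immediate from \Cref{rem-detQ-realroots}: if $z\neq\pm 2/(3\sqrt 3)$ and $H(z)=0$, then $-iz\in\Sp(\mathcal A)$, and applying the characterization of the real points of $\Sp(\mathcal A)$ to $-z$ gives $L=2\pi\sqrt{(k^2+kl+l^2)/3}$ together with the stated $z$. To get \eqref{lem-detQ-cl2} I would argue by direct verification rather than through the eigenfunction: letting $\lambda_1,\lambda_2,\lambda_3$ be the three purely imaginary numbers of \eqref{lem-detQ-cl2} and writing $\lambda_j=\tfrac{2\pi i}{L}\gamma_j$ with $\gamma_1=-(2k+l)/3$, $\gamma_2=(k-l)/3$, $\gamma_3=(k+2l)/3$, one checks the three Vieta relations for \eqref{eq-lambda}. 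The relation $\sum_j\lambda_j=0$ is an immediate cancellation; $\sum_{i<j}\lambda_i\lambda_j=1$ reduces to $\sum_j\gamma_j^2=\tfrac{2}{3}(k^2+kl+l^2)$ combined with \eqref{def-L}; and $\lambda_1\lambda_2\lambda_3=-iz$ reduces to \eqref{lem-detQ-cl1}. Since $k,l\ge 1$ makes the three values distinct, they are exactly the roots of \eqref{eq-lambda}, so \eqref{lem-detQ-cl2} holds with this labeling.

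For the simplicity in part 1, the key point is that \eqref{lem-detQ-cl2} yields $e^{\lambda_j L}=e^{-2\pi i(2k+l)/3}=:\zeta$ for every $j$, so the antisymmetric sum defining $\det Q$ vanishes. Writing $\det Q=\sum_j d_j e^{-\lambda_j L}$ with $d_1=\lambda_3-\lambda_2$, $d_2=\lambda_1-\lambda_3$, $d_3=\lambda_2-\lambda_1$, I would differentiate in $z$. Using $\sum_j d_j\equiv 0$ (hence $\sum_j d_j'=0$) and $e^{-\lambda_j L}=\zeta^{-1}$ at the root, this gives $(\det Q)'=-L\zeta^{-1}\sum_j d_j\lambda_j'$. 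With $\lambda_j'=-i/(3\lambda_j^2+1)$ and $3\lambda_j^2+1=\prod_{i\neq j}(\lambda_j-\lambda_i)$, the sum collapses to $\sum_j d_j\lambda_j'=-i(p^2+q^2+r^2)/\Xi$, where $p=\lambda_1-\lambda_2$, $q=\lambda_2-\lambda_3$, $r=\lambda_3-\lambda_1$ and $\Xi=pqr$; since $p^2+q^2+r^2=2\sum_j\lambda_j^2-2\sum_{i<j}\lambda_i\lambda_j=2(-2)-2=-6$, this equals $6i/\Xi\neq 0$. As $\Xi(z)\neq 0$ off $\pm 2/(3\sqrt 3)$ and $\det Q(z)=0$, one concludes $H'(z)=(\det Q)'(z)/\Xi(z)=-6iL\zeta^{-1}/\Xi(z)^2\neq 0$, i.e. $z$ is a simple zero of $H$.

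For part 2, I would first confirm \eqref{lem-detQ-cl3} by factoring: the discriminant $-4+27z^2$ vanishes exactly at $z_0=\pm 2/(3\sqrt 3)$, and writing $\lambda^3+\lambda+iz=(\lambda-a)^2(\lambda+2a)$ forces $a^2=-1/3$ and $2a^3=iz$, producing the double root $\mp i/\sqrt 3$ and the simple root $\pm 2i/\sqrt 3$. Next, using the local expansions \eqref{lem-hol-lambda}, I would extract the leading behavior near $z_0$: with $\epsilon=z-z_0$ and $\delta$ the half-difference $(\lambda_1-\lambda_2)/2\sim c\sqrt\epsilon$, one finds $\Xi\sim 6c\sqrt\epsilon$ and, expanding $e^{\mp\delta L}$ to first order, $\det Q\sim 2c\sqrt\epsilon\,[\,e^{-\Lambda L}(1-\Delta L)-e^{-\lambda_3 L}\,]$ with $\Lambda=\mp i/\sqrt 3$ and $\Delta=\lambda_3-\Lambda=\pm i\sqrt 3$. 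Hence $H=\det Q/\Xi$ tends to $\tfrac13[\,e^{-\Lambda L}(1-\Delta L)-e^{-\lambda_3 L}\,]$ while the symmetric, entire product $\det Q\cdot\Xi$ behaves like a nonzero constant times $\epsilon$, i.e. has a simple zero. It remains to see the bracket never vanishes; taking moduli for the upper sign, $|e^{iL/\sqrt 3}(1-i\sqrt 3 L)|=\sqrt{1+3L^2}>1=|e^{-2iL/\sqrt 3}|$, so the triangle inequality forces the bracket to be nonzero for every $L>0$, yielding $H(z_0)\neq 0$ and the claimed simplicity. The only delicate bookkeeping here is verifying that the $O(\epsilon)$ corrections to $\Lambda$, $\Delta$ and $\lambda_3$ enter $\det Q$ only at order $\epsilon^{3/2}$, so that the leading $\sqrt\epsilon$-coefficient is genuinely the bracket at $\epsilon=0$; by contrast the conceptual heart of the whole argument is the clean identity $p^2+q^2+r^2=-6$ that drives the simplicity in part 1.
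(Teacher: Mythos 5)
Your proof is correct, and its skeleton matches the paper's: \eqref{lem-detQ-cl1} via \Cref{rem-detQ-realroots}, simplicity in part 1 by differentiating $\det Q$ at the root using that all $e^{-\lambda_j L}$ coincide there, and part 2 by a $\sqrt{\eps}$-expansion at the degenerate points. The differences are in execution, and they mostly work in your favor. In part 1 you verify \eqref{lem-detQ-cl2} directly from Vieta's relations instead of citing \cite{Rosier97}, and you compute the derivative through the identity $3\lambda_j^2+1=\prod_{i\neq j}(\lambda_j-\lambda_i)$, which collapses the relevant sum to $(p^2+q^2+r^2)/\Xi=-6/\Xi$; the paper evaluates the same sum $\sum_j(\lambda_{j+1}-\lambda_j)/(3\lambda_{j+2}^2+1)$ by plugging in the explicit gaps $\frac{2\pi i}{L}k$, $\frac{2\pi i}{L}l$ and using $\sum_{i<j}\lambda_i\lambda_j=1$ --- the same derivative, but your identity is cleaner, label-free, and shows the nonvanishing holds for \emph{any} real root of $H$ with equal exponentials, not just through the $(k,l)$-parametrization. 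In part 2 your write-up is in fact more complete than the paper's: both expand $\det Q$ and $\Xi$ to order $\sqrt{\eps}$, but the paper asserts that the leading coefficient of $\det Q(z_0+\eps)$ is the explicit constant $-\frac{2Li}{\sqrt3}\frac{\sqrt{-i}}{3^{1/4}}$, which a careful expansion does not bear out: the coefficient is $2c\bigl[e^{-\Lambda L}(1-\Delta L)-e^{-\lambda_3 L}\bigr]$, i.e.\ exactly your bracket, whose nonvanishing is not obvious and is the real content of the step. Your modulus comparison $\bigl\lvert e^{\pm iL/\sqrt3}(1\mp i\sqrt3 L)\bigr\rvert=\sqrt{1+3L^2}>1=\bigl\lvert e^{\mp 2iL/\sqrt3}\bigr\rvert$ supplies precisely this missing justification, uniformly in $L>0$, so you have repaired a computational slip in the paper rather than reproduced it. Two cosmetic points: the correction $\delta=c\sqrt{\eps}+O(\eps)$ actually perturbs $\det Q$ at order $\eps$, not $\eps^{3/2}$, but this is still $o(\sqrt{\eps})$ and harmless; and your final step is sound because $\det Q\,\Xi$ is entire (symmetric in the $\lambda_j$, \Cref{pro-S}), so the expansion $12c^2\eps\,[\cdot]+o(\eps)$ with nonzero bracket does force a simple zero, while the exact value of $c$ in \eqref{lem-detQ-cc} (where the paper's stated direction of splitting also looks off) never matters --- only $c\neq 0$ does.
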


\begin{proof} We begin with 1). By \Cref{rem-detQ-realroots}, assertion~\eqref{lem-detQ-cl1} holds. Assertion~\eqref{lem-detQ-cl2} then follows from \cite{Rosier97}. To prove that $z$ is then a simple root of the equation $H(z) = 0$ in the case $z \neq \pm 2 / (3 \sqrt{3})$, we proceed as follows. We have
$$
\lambda_j(z + \eps) = \lambda_j (z) - \frac{i \eps}{3 \lambda_j^2 + 1} + O(\eps^2).
$$
It follows that
\begin{multline*}
\det Q(z+\eps)  =  \sum_{j=1}^3 \big(\lambda_{j+1} (z + \eps)  - \lambda_j(z + \eps) \big) e^{-\lambda_{j+2} (z + \eps) L } \\[6pt]
=   \sum_{j=1}^3 \Big(\lambda_{j+1} (z)  - \lambda_j(z)  - \frac{i \eps}{3 \lambda_{j+1}^2 + 1} + \frac{i \eps}{3 \lambda_j^2 + 1} + O(\eps^2)\Big) e^{-\lambda_{j+2} (z) L } \Big( 1 +  \frac{i \eps L }{3 \lambda_{j+2}^2 + 1} + O(\eps^2) \Big).
\end{multline*}
Since
$$
e^{-\lambda_{1} (z) L } = e^{-\lambda_{2} (z) L }  = e^{-\lambda_{3} (z) L },
$$
we derive that
\begin{equation}\label{lem-detQ-p1}
\det Q(z+\eps) = i \eps L e^{-\lambda_{1} (z) L } \sum_{j=1}^3 \frac{ \lambda_{j+1} (z) - \lambda_j (z) }{ 3 \lambda_{j+2}^2 (z) + 1} + O(\eps^2).
\end{equation}
In what follows, for notational ease, we denote $\lambda_j(z)$ by $\lambda_j$. We have
\begin{multline}\label{lem-detQ-p2}
 \sum_{j=1}^3 \frac{ \lambda_{j+1} - \lambda_j  }{ 3 \lambda_{j+2}^2 + 1}  = \frac{2 \pi i }{ L } \left(  \frac{k}{3 \lambda_3^2 + 1} + \frac{l}{3 \lambda_1^2 + 1}  - \frac{k+l}{3 \lambda_2^2 + 1}\right) \\[6pt]
 = \frac{2 \pi i }{ L } \left(  \frac{3k (\lambda_2^2 - \lambda_3^2)}{(3 \lambda_3^2 + 1)(3 \lambda_2^2 + 1)} +   \frac{3l (\lambda_2^2 - \lambda_1^2) }{(3 \lambda_1^2 + 1)(3 \lambda_2^2 + 1)}\right) \\[6pt]
=  \left( \frac{2 \pi i }{ L } \right)^2 \left(  - \frac{3kl  (\lambda_2 + \lambda_3) }{(3 \lambda_3^2 + 1)(3 \lambda_2^2 + 1)} +   \frac{3 kl (\lambda_2+ \lambda_1) }{(3 \lambda_1^2 + 1)(3 \lambda_2^2 + 1)}\right).
\end{multline}
Note that
\begin{multline}\label{lem-detQ-p3}
(\lambda_2 + \lambda_1) (3 \lambda_3^2 + 1) - (\lambda_2 + \lambda_3) (3 \lambda_1^2 + 1) \\[6pt]
= (\lambda_1 - \lambda_3) + 3 (\lambda_3 - \lambda_1) (\lambda_1 \lambda_2 + \lambda_1 \lambda_3 + \lambda_2 \lambda_3)  = 2 (\lambda_3 - \lambda_1),
\end{multline}
since $\lambda_1 \lambda_2 + \lambda_1 \lambda_3 + \lambda_2 \lambda_3 = 1$. From \eqref{lem-detQ-p1},  \eqref{lem-detQ-p2}, and  \eqref{lem-detQ-p3}, we derive that $z$ is a simple root of $H(z)$.

\medskip
We next consider 2). We only consider the case $z = 2/ (3 \sqrt{3})$, the other case follows similarly. By \eqref{lem-hol-lambda} in the proof of \Cref{lem-hol}, we have
\begin{equation}\label{lem-detQ-cc}
 \lambda_1(z+ \eps) =
 - \frac{i}{\sqrt 3} + \frac{\sqrt{ - i}}{3^{1/4}} \sqrt \epsilon + O(\epsilon),  \;
 \lambda_2(z + \eps) =
 - \frac{i}{\sqrt 3}  - \frac{\sqrt{- i}}{3^{1/4}} \sqrt \epsilon + O(\epsilon),
 \;  \lambda_3(z+\epsilon) =  \frac{2i}{\sqrt 3} + O(\epsilon).
\end{equation}
It follows that
$$
\det Q(z + \eps) = - \frac{2 L i}{\sqrt{3}}  \frac{\sqrt{-i} }{3^{1/4}} \sqrt{\eps} + O(\eps).
$$
Since $\Xi(z + \eps) = c_+ \sqrt{\eps}$  for some $c_+ \neq 0$ by \eqref{lem-detQ-cc}, $z = 2/ (3 \sqrt{3})$ is not a root of the equation $H(z) = 0$ and $z$  is a simple root of the equation $\det Q(z) \Xi (z) = 0$.
The proof is complete.
\end{proof}

\begin{lemma}\label{lem-HG} Let $z \in \mC$ be such that  $z \neq  \pm 2/ (3 \sqrt{3})$. Assume that $H(z) = G(z) = 0$. Then, for some $k, l \in \N$ with $k \ge l \ge 1$, we have
\begin{equation}
L = 2 \pi \sqrt{\frac{k^2 + k l + l^2}{3}},
\end{equation}
and
\begin{equation}
z = - \frac{(2k + l)(k-l)(2 l + k)}{3 \sqrt{3}(k^2 + kl + l^2)^{3/2}}.
\end{equation}
\end{lemma}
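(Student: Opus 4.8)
The plan is to show that the two hypotheses $G(z)=0$ and $H(z)=0$ together force $z$ to be \emph{real}, after which \Cref{lem-detQ}~(item~1) immediately yields the critical length and the formula for $z$. Since $z\neq \pm 2/(3\sqrt 3)$, the discriminant $-4+27z^2$ of $\lambda^3+\lambda+iz$ is nonzero, so the three roots $\lambda_1,\lambda_2,\lambda_3$ are distinct and $\Xi(z)\neq 0$. Recalling from \eqref{def-GH} that $G=P/\Xi$ and $H=\det Q/\Xi$, the hypotheses reduce to $P(z)=0$ and $\det Q(z)=0$.

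From $\det Q(z)=0$ I extract a nonzero vector $(a_j)$ in the kernel of $Q$; reading off the rows of \eqref{eq-defQ}, the function $w=\sum_{j=1}^3 a_j e^{\lambda_j x}$ is a nonzero solution of $w'''+w'+izw=0$ satisfying $w(0)=w(L)=w'(L)=0$. Likewise, from the rows of the matrix defining $P$ in \eqref{def-P}, the condition $P(z)=0$ produces a nonzero solution $v$ of the same ODE satisfying $v(0)=v(L)=v'(0)=0$. Both $w$ and $v$ are genuinely nonzero functions because exponentials with distinct exponents are linearly independent, so $\int_0^L|w|^2>0$ and $\int_0^L|v|^2>0$.

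The key step is an integration-by-parts identity: for any solution $\phi$ of $\phi'''+\phi'+iz\phi=0$ with $\phi(0)=\phi(L)=0$, one has
\[
\frac{1}{2}\big(|\phi'(0)|^2-|\phi'(L)|^2\big)=\Im(z)\int_0^L |\phi|^2\, \diff x.
\]
Indeed, multiplying $\phi'''+\phi'=-iz\phi$ by $\bar\phi$ and integrating over $[0,L]$, the boundary conditions make $\int_0^L \phi'\bar\phi$ purely imaginary (since $2\Re\int_0^L\phi'\bar\phi=|\phi(L)|^2-|\phi(0)|^2=0$), while two integrations by parts give $\Re\int_0^L\phi'''\bar\phi=\tfrac12(|\phi'(0)|^2-|\phi'(L)|^2)$; taking real parts and using $\Re(-iz)=\Im(z)$ yields the identity. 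Applying it to $w$ (where $w'(L)=0$) gives $\tfrac12|w'(0)|^2=\Im(z)\int_0^L|w|^2\ge 0$, hence $\Im(z)\ge 0$; applying it to $v$ (where $v'(0)=0$) gives $-\tfrac12|v'(L)|^2=\Im(z)\int_0^L|v|^2\le 0$, hence $\Im(z)\le 0$. Therefore $\Im(z)=0$, i.e.\ $z\in\mR$.

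Since $z$ is now real, $z\neq\pm 2/(3\sqrt 3)$, and $H(z)=0$, \Cref{lem-detQ}~(item~1) — which rests on Rosier's characterization \cite{Rosier97} — gives $L=2\pi\sqrt{(k^2+kl+l^2)/3}$ and the stated formula for $z$ for some $k\ge l\ge 1$, which is exactly the conclusion. The only substantive point is the integration-by-parts identity together with the observation that $G(z)=0$ and $H(z)=0$ impose \emph{opposite} sign constraints on $\Im(z)$; this is where both hypotheses are genuinely used, and everything else is either bookkeeping or a citation. I expect this sign dichotomy to be the crux, whereas verifying the identity and invoking \Cref{lem-detQ} are routine.
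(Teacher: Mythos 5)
Your proof is correct, and it takes a genuinely different route from the paper's. The paper argues algebraically: setting $\alpha=\lambda_3-\lambda_1$ and $\beta=\lambda_3-\lambda_2$ and using $\lambda_1\lambda_2+\lambda_2\lambda_3+\lambda_3\lambda_1=1$, it rewrites $\det Q(z)=0$ and $P(z)=0$ as the exponential identities \eqref{eq-lambda-3}--\eqref{eq-lambda-4}, combines them into the factorization $(e^{\alpha L}-e^{\beta L})(e^{\alpha L}-1)(e^{\beta L}-1)=0$, deduces from the distinctness of the $\lambda_j$ that $e^{\alpha L}=e^{\beta L}=1$, hence $3\lambda_3\in \frac{2\pi i}{L}\mZ$, so $\lambda_3$ is purely imaginary and $z=i(\lambda_3^3+\lambda_3)$ is real. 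You instead read the two vanishing determinants spectrally: a null vector of $Q$ in \eqref{eq-defQ} produces a nontrivial solution $w$ of $w'''+w'+izw=0$ with $w(0)=w(L)=w'(L)=0$, a null vector of the matrix in \eqref{def-P} produces a nontrivial $v$ with $v(0)=v(L)=v'(0)=0$, and your energy identity forces $\Im(z)\ge 0$ from the first and $\Im(z)\le 0$ from the second; I checked the identity and the sign bookkeeping, and they are right (note also that if $w'(0)=0$ as well, you get $\Im(z)=0$ outright, so no case is lost). Both proofs then conclude identically by reducing to realness of $z$ and invoking item 1 of \Cref{lem-detQ}. Your dissipativity argument is shorter and more conceptual: it is precisely the complex-$z$ extension of the integration by parts the paper itself uses in \Cref{rem-detQ-realroots} and in Step 1 of \Cref{lem-EU}, and it explains structurally why the two hypotheses are complementary (forward versus backward boundary-value problem). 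The paper's computation, in exchange, extracts explicit information along the way — the differences $\lambda_j-\lambda_k$ lie in $\frac{2\pi i}{L}\mZ$, which is exactly Rosier's characterization of critical lengths — whereas in your argument that information only reappears at the end through the citation of \cite{Rosier97} inside \Cref{lem-detQ}.
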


\begin{proof} By \Cref{rem-detQ-realroots} (see also \Cref{lem-detQ}), it suffices to prove that if $z \in \mC$ is such that  $z \neq  \pm  2 / (3  \sqrt{3})$, and
$H(z) = G(z) = 0$, then $z$ is real. Indeed, note that
\begin{equation*}
\det Q (z)  = (\lambda_1 - \lambda_3) (e^{-\lambda_2 L } - e^{-\lambda_3 L}) + (\lambda_3 - \lambda_2) (e^{-\lambda_1 L } - e^{-\lambda_3 L }),
\end{equation*}
and
$$
- P(z) =
 (\lambda_1 - \lambda_3) (e^{\lambda_2 L } - e^{\lambda_3 L}) + (\lambda_3 - \lambda_2) (e^{\lambda_1 L } - e^{\lambda_3 L }).
$$
It follows that
\begin{equation}\label{eq-lambda-3}
|\det Q (z)| = 0 \mbox{ if and only if } (\lambda_ 3- \lambda_1) (e^{(\lambda_3 - \lambda_2) L } - 1) =  (\lambda_3 - \lambda_2) (e^{(\lambda_3 - \lambda_1) L }  -1),
\end{equation}
and
\begin{equation}\label{eq-lambda-4}
|P(z)|= 0 \mbox{ if and only if } (\lambda_ 3- \lambda_1) (e^{-(\lambda_3 - \lambda_2) L } - 1) =  (\lambda_3 - \lambda_2) (e^{-(\lambda_3 - \lambda_1) L }  -1).
\end{equation}

Solving the system
\begin{equation}\left\{
\begin{array}{c}
\sum_{j=1}^3 \lambda_j = 0, \\[6pt]
\sum_{j=1}^3 \lambda_j \lambda_{j+1} = 1,
\end{array}\right.
\end{equation}
in which $\lambda_3$ is a parameter, one has, with $\Delta = - 3 \lambda_3^2 - 4$,
\begin{equation*}
\lambda_1 = \frac{-\lambda_3 + \sqrt{\Delta}}{2} \quad \mbox{ and } \quad \lambda_2 = \frac{-\lambda_3 - \sqrt{\Delta}}{2}.
\end{equation*}
This implies
\begin{equation}\label{ap-p1-*}
\alpha = \alpha (\lambda_3) = \lambda_3 - \lambda_1 =  \frac{3 \lambda_3 - \sqrt{\Delta}}{2} \quad \mbox{ and } \quad  \beta = \beta(\lambda_3) = \lambda_3 - \lambda_2 =  \frac{3 \lambda_3 + \sqrt{\Delta}}{2}.
\end{equation}

Thus, if $z$ is a common root of $|\det Q|$ and $|P|$ and $\lambda_i(z) \neq \lambda_j(z)$ for $i \neq j$ ($1 \le i, j \le 3$), then, by \eqref{eq-lambda-3} and \eqref{eq-lambda-4},
\begin{equation*}
(e^{\alpha L } - 1) (e^{- \beta L }  -1) = (e^{- \alpha L } - 1)  (e^{\beta L }  -1),
\end{equation*}
which is equivalent to
\begin{equation*}
(e^{ \alpha L} -  e^{ \beta L} ) (e^{ \alpha L } - 1) (e^{ \beta L }  -1) = 0.
\end{equation*}
This implies that  either $e^{ \alpha L} =  e^{ \beta L}$, or  $e^{\alpha L } = 1$, or $e^{\beta L } = 1$. Since $\lambda_1, \lambda_2, \lambda_3$ are distinct, it follows from \eqref{eq-lambda-3} and \eqref{eq-lambda-4} that
\begin{equation}
e^{\alpha L } =  e^{\beta L }  = 1.
\end{equation}
We derive from \eqref{ap-p1-*} that
$$
3 \lambda_3 \in 2 \pi i \mZ/L.
$$
Since
$$
\lambda_3^3 + \lambda_3 = - i z,
$$
it follows that $z$ is real. The proof is complete.
\end{proof}

We finally establish
\begin{lemma}\label{lem-bh-detQ} There exist $c,  \, C>0$ and  $m_0 \in \N$ such that
\begin{enumerate}
\item[1)] for $m \in \mZ$ with $|m| \ge m_0$, we have
\[
\lvert\det Q(z)\rvert \ge C e^{ - c |z|^{1/3}} \mbox{ if } \Im (z) =  \Big((2 m + 1) \pi
/ (\sqrt{3} L)  \Big)^3.
\]
\item[2)] for $z \in \mC$ with $|z| \ge m_0$ and $|\Re(z) | \ge c |z|^{1/3}$, we have
\[
\lvert\det Q(z)\rvert \ge C e^{ - c |z|^{1/3}}.
\]
\end{enumerate}
\end{lemma}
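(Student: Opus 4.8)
The plan is to control $\det Q$ through the two exponentials of largest modulus it contains, and to isolate the single direction in which they compete. Using $\sum_j\lambda_j=0$, I first rewrite, exactly as in the proof of \Cref{pro-monotone},
\[
\det Q(z)=(\lambda_2-\lambda_1)e^{-\lambda_3 L}+(\lambda_3-\lambda_2)e^{-\lambda_1 L}+(\lambda_1-\lambda_3)e^{-\lambda_2 L},
\]
ordering the roots so that $\Re\lambda_1\le\Re\lambda_2\le\Re\lambda_3$. For $|z|$ large one has $\lambda_j=\omega_j(-iz)^{1/3}+O(|z|^{-1/3})$ with the $\omega_j$ the cube roots of $1$, an expansion valid in every fixed sector by the computation proving \Cref{th:lambda_asym}; writing $-iz=|z|e^{i\alpha}$, the real parts are $|z|^{1/3}\cos(\tfrac{\alpha}{3}+\tfrac{2\pi j}{3})+O(|z|^{-1/3})$, three cosines $2\pi/3$ apart. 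The elementary but decisive observation is that the two smallest of these cosines coincide \emph{only} for $\alpha\equiv0$, i.e. only on the ray $\{z=iY:\ Y>0\}$; everywhere else the smallest real part is attained by a single root and separated from the next by a gap comparable to $|z|^{1/3}\,|\arg z-\pi/2|$.

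Away from that ray I would argue by plain domination. Whenever this gap exceeds a fixed multiple of $|z|^{1/3}$ — in particular throughout the lower half–plane, near the real directions, and as soon as $|\Re z|\gg|z|^{2/3}$ — the term carrying $e^{-\lambda_1 L}$ beats the other two by an exponentially large factor, its coefficient $\lambda_3-\lambda_2$ being of size $|z|^{1/3}$, so that $|\det Q(z)|\ge c\,|z|^{1/3}e^{-\Re\lambda_1 L}\ge c\,|z|^{1/3}$ since $\Re\lambda_1\le0$. This is far stronger than the claimed bound and settles the bulk of item~2).

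It remains the marginal sector near the positive imaginary axis, where $\lambda_1,\lambda_2$ have nearly equal real parts while $\lambda_3$ stays separated. Factoring out the dominant exponential I would write
\[
\det Q(z)=e^{-\lambda_1 L}\Big[(\lambda_3-\lambda_2)\big(1-r(z)\,e^{w(z)}\big)+O\big(|z|^{1/3}e^{-\delta L|z|^{1/3}}\big)\Big],
\]
with $w=(\lambda_1-\lambda_2)L$ and $r=(\lambda_3-\lambda_1)/(\lambda_3-\lambda_2)$, the error coming from the genuinely subdominant $e^{-\lambda_3 L}$. A short computation from \Cref{th:lambda_asym} gives $|r|=1+O(|z|^{-1/3})$, $\Re w=-\tfrac{L|\Re z|}{\sqrt3\,|z|^{2/3}}(1+o(1))\le0$ and $\Im w=\pm\sqrt3\,L\,\Re\lambda_3(1+o(1))$, and the whole point is to keep the bracket $1-re^{w}$ away from $0$. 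For item~2) the condition $|\Re z|\ge c|z|^{1/3}$ forces $\Re w\le-\tfrac{Lc}{2\sqrt3}|z|^{-1/3}$, hence $|re^{w}|\le\big(1+O(|z|^{-1/3})\big)e^{\Re w}\le1-c_1|z|^{-1/3}$ once $c$ is taken large enough to absorb the $O(|z|^{-1/3})$ coming from $|r|$; thus $|1-re^{w}|\ge c_1|z|^{-1/3}$ and $|\det Q(z)|\ge c_2>0$. For item~1) I would use the phase instead: on the line $\Im z=\big((2m+1)\pi/(\sqrt3L)\big)^3$ one checks $\Im w=(2m+1)\pi+o(1)$ uniformly for $|\Re z|\le c|z|^{1/3}$ (the correction being $O(|z|^{-1})$), so $e^{w}$ has modulus $\le1$ and phase within $o(1)$ of $\pi$; since $\arg r\approx\pm\pi/3$, the point $re^{w}$ lies in $\{t e^{i\vartheta}:0\le t\le1,\ |\vartheta-\pi|\le\tfrac{\pi}{3}+o(1)\}$, and there $|t e^{i\vartheta}-1|^2=t^2-2t\cos\vartheta+1\ge t^2+t+1\ge1$, whence $|1-re^{w}|\ge\tfrac12$ and again $|\det Q(z)|\ge c_2$. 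The portion $|\Re z|\ge c|z|^{1/3}$ of that line is covered by item~2), and both bounds are trivially $\ge Ce^{-c|z|^{1/3}}$.

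The main obstacle is precisely the uniform bookkeeping in this marginal sector: first, extending the asymptotics of \Cref{th:lambda_asym} off $\mR_+$ with explicit remainders, and second, turning the heuristics ``$\Re w\approx0$, $\Im w$ near $(2m+1)\pi$'' into estimates for $r$, $\Re w$ and $\Im w$ strong enough to beat the $O(|z|^{-1/3})$ errors after the correct choice of $c$. The phase estimate (item~1) and the modulus estimate (item~2) are where the special height $\big((2m+1)\pi/(\sqrt3L)\big)^3$ and the threshold $c|z|^{1/3}$ enter, and making the constants fit so that items~1) and~2) together cover the whole boundary $\partial\mathcal L_m$ used in \Cref{pro-monotone} is the delicate point.
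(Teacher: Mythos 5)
Your proposal is correct (granting the off-axis asymptotics you flag, which do hold uniformly in all directions: $\lambda_j=\omega_j(-iz)^{1/3}\bigl(1+O(|z|^{-2/3})\bigr)$ with $\omega_j^3=1$), and it runs on the same engine as the paper's proof, but it is organized differently and lands on a stronger estimate. Both arguments reduce the lemma to the competition between the two largest of the exponentials $e^{-\lambda_1L},e^{-\lambda_2L},e^{-\lambda_3L}$, and both exploit the same two facts: near-cancellation forces the moduli to match, which pins $\Im\lambda_3$ to $O(|\lambda_3|^{-1})$ and hence $|\Re(z)|=O(|z|^{1/3})$ — exactly what $|\Re(z)|\ge c|z|^{1/3}$ with $c$ large excludes (item 2); and it forces the phase condition $\sqrt3\,L\,\Re\lambda_3\equiv\pi/3\pmod{2\pi}$, which the heights $\bigl((2m+1)\pi/(\sqrt3 L)\bigr)^3$ violate by $2\pi/3-o(1)$ (item 1). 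The differences are these: the paper multiplies by $\lambda_3^{-1}e^{\lambda_3L}$, parametrizes everything by $\lambda_3$ via $\lambda_{1,2}=\bigl(-\lambda_3\mp\sqrt{-3\lambda_3^2-4}\bigr)/2$ — which is how it sidesteps the cube-root branch bookkeeping you worry about — and argues by contradiction from $|f(\lambda_3)|\le1$, yielding the exponentially small bound $|\det Q|\ge|\lambda_3|e^{-L\Re\lambda_3}$, which is all the lemma asks; you instead factor out the single dominant exponential $e^{-\lambda_1L}$, bound $|1-re^{w}|$ directly, and dispose of everything far from the ray $i\mR_+$ by domination. Your route buys a genuinely stronger conclusion — $|\det Q|$ bounded below by a positive constant, indeed by $c|z|^{1/3}$ in the domination region, rather than by $Ce^{-c|z|^{1/3}}$ — and is direct rather than by contradiction; the paper's route needs no case splitting, since the derivation of \eqref{ccc} from $|f(\lambda_3)|\le1$ covers the whole plane at once. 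Two slips in your write-up, both harmless: on the critical lines the correction to $\Im w$ is $O(|z|^{-1/3})$, not $O(|z|^{-1})$ (both the inversion of $a^3+a=Y_m$ and the $\tfrac{2}{3a}$ term in $\sqrt{-3\lambda_3^2-4}$ contribute at that order), but your argument only uses that it is $o(1)$; and since $|r|=1+O(|z|^{-2/3})$ may exceed $1$, the point $re^{w}$ need not lie in the closed unit disc, yet the inequality $t^2-2t\cos\vartheta+1\ge t^2+t+1\ge 1$ uses only $\cos\vartheta\le-\tfrac12$ and $t\ge0$, so the restriction $t\le1$ can simply be deleted.
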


\begin{proof} For $z \in \mC$ with large $|z|$, denote $\lambda_1, \lambda_2, \lambda_3$ be the three roots of the equation
\[
\lambda^3 + \lambda = - iz,
\]
with the convention $ \Re (\lambda_3) \ge \max \big\{\Re (\lambda_1), \Re (\lambda_2) \big\}$, and, with $\Delta = - 3 \lambda_3^2 - 4$,
\[
\lambda_1 = \frac{-\lambda_3 + \sqrt{\Delta}}{2}, \quad \mbox{ and } \quad \lambda_2 = \frac{-\lambda_3 - \sqrt{\Delta}}{2}.
\]
This is possible since
\begin{equation*}\left\{
\begin{array}{c}
\lambda_1 + \lambda_2 =  - \lambda_3, \\[6pt]
\lambda_1 \lambda_2 = 1 + \lambda_3^2. 
\end{array}\right.
\end{equation*}

We have
\[
|\lambda_3^{-1} \det Q(z) e^{\lambda_3 L}|  = |f (\lambda_3)|,
\]
where
\begin{equation}\label{eq-lambda-3-1}
 f (\lambda_3) : = \frac{3 \lambda_3 - \sqrt{\Delta}}{2 \lambda_3} (e^{\frac{3 \lambda_3 + \sqrt{\Delta}}{2} L } - 1) - \frac{3 \lambda_3 + \sqrt{\Delta}}{2 \lambda_3} (e^{\frac{3 \lambda_3 - \sqrt{\Delta}}{2} L } - 1).
\end{equation}
Since $\lambda_3$ is large, we have
\begin{multline}\label{f-lambda}
\left(\frac{3 - i \sqrt{3}}{2} \right)^{-1}f(\lambda_3) = [1 + O(\lambda_3^{-2}) ](e^{\frac{3 + i \sqrt{3}}{2} \lambda_3 L + O(\lambda_3^{-1}) } - 1) \\[6pt]
 -  [1 + O(\lambda_3^{-2}) ] e^{ i \varphi_0} (e^{\frac{3 - i \sqrt{3}}{2} \lambda_3 L  + O(\lambda_3^{-1})} - 1),
\end{multline}
where $\varphi_0 = \pi/3$ since $ \frac{3 + i \sqrt{3}}{2} / \frac{3 - i \sqrt{3}}{2}  = e^{ i \varphi_0}$.

We begin with 1).  It suffices to prove, for $z \in \mC$ with $\Im (z) =  \Big((2 m + 1) \pi / (\sqrt{3} L)  \Big)^3$ with large $|m|$ ($m \in \mZ$), that
\begin{equation}\label{claim-detQ}
|\lambda_3^{-1} \det Q(z) e^{\lambda_3 L}| \ge 1.
\end{equation}
Assume that \eqref{claim-detQ} does not hold. Then for some $m \in \mZ$ with large modulus  and for some $z \in \mC$ with $\Im (z) =  \Big((2 m + 1) \pi / (\sqrt{3} L)  \Big)^3$, we have
\[
|f(\lambda_3) | \le 1.
\]
Since $\Re(\lambda_3) > 0$ and is large, it follows that
\[
|e^{\frac{3 + i \sqrt{3}}{2} \lambda_3 L }| = (1 + O(\lambda_3^{-1})) |e^{\frac{3 - i \sqrt{3}}{2} \lambda_3 L }|.
\]
One  derives that, if $ \lambda_3  = a  + i b$ with $a, b \in \mR$,
\begin{equation}\label{ccc}
a \mbox{ is  large  and } |b|  = O(\lambda_3^{-1}).
\end{equation}
It follows that
\begin{equation*}
e^{\frac{3 + i \sqrt{3}}{2} \lambda_3 L } = e^{\frac{3 a L}{2}} e^{i \frac{\sqrt{3} a L }{2}  } e^{O(\lambda_3^{-1})}
\end{equation*}
and
\begin{equation*}
e^{\frac{3 - i \sqrt{3}}{2} \lambda_3 L } = e^{  \frac{3 a L }{2}}  e^{ - i  \frac{\sqrt{3} a L }{2}} e^{O(\lambda_3^{-1})}.
\end{equation*}
Using \eqref{f-lambda}, and the fact $|f(\lambda_3)| \leq 1$ and  $\Im (z) =  \Big((2 m + 1) \pi / (\sqrt{3} L)  \Big)^3$, we obtain a contradiction.  Hence \eqref{claim-detQ} holds. The proof of 1) is complete.

To establish 2), it suffices to prove \eqref{claim-detQ} for $z \in \mC$ with $|z| \ge m_0$ and $|\Re(z)| \ge c|z|^{1/3}$ for some $c >0$.  This indeed follows from the fact if $|z|$ is large and $|f(\lambda_3)| \le 1$, then \eqref{ccc} holds.  The proof is complete.
\end{proof}

\medskip
\noindent \textbf{Acknowledgments.} The authors were partially supported by  ANR Finite4SoS ANR-15-CE23-0007. 
A. Koenig is supported by a public grant overseen by the French National Research Agency (ANR) as part of the ``Investissements d'Avenir'''s program of the Idex PSL reference ANR-10-IDEX-0001-02 PSL.  H.-M. Nguyen thanks Fondation des Sciences Math\'ematiques de Paris (FSMP) for the Chaire d'excellence which allows him to visit  Laboratoire Jacques Louis Lions  and Mines ParisTech. This work has been done during this visit. 

\providecommand{\bysame}{\leavevmode\hbox to3em{\hrulefill}\thinspace}
\providecommand{\MR}{\relax\ifhmode\unskip\space\fi MR }
\providecommand{\MRhref}[2]{%
  \href{http://www.ams.org/mathscinet-getitem?mr=#1}{#2}
}
\providecommand{\href}[2]{#2}

\end{document}